\newcommand{\Z}{{\mathbb{Z}}}
\newcommand{\N}{{\mathbb{N}}}
\newcommand{\R}{{\mathbb{R}}}
\def\gr{\mathop{\partial_{\R}}}
\newtheorem{theo}{Theorem}
\newtheorem{pr}{Proposition}
\newtheorem{co}{Corollary}
\newtheorem{lm}{Lemma}
\newtheorem{re}{Remark}
\newtheorem{de}{Definition}
\def\rad{\mathop{\mathfrak{B}_{\text{gr}}}}
\def\a{\alpha}
\font\hu=msbm10
\font\got=eufb10
\font\gots=eufb8
\def\Ps{\text{\gots S}}
\def\P{\text{\got S}}
\def\proof#1{Proof. {#1} \hfill {$\square$}}
\def\lin#1{\langle #1 \rangle}
\def\lan{\mathop{\text{\rm lan}}}
\def\ran{\mathop{\text{\rm ran}}}
\def\Path{\mathop{\text{\rm Path}}}
\title[Centers of path algebras]{Centers of path algebras, Cohn and Leavitt path algebras}
\author[M. G. Corrales]{Mar\'{\i}a G. Corrales Garc\'{\i}a}
\address{M. G. Corrales Garc\'{\i}a:  Centro Regional Universitario de Cocl\'e: \lq\lq Dr. 
Bernardo Lombardo\rq\rq, Universidad de  Panam\'a.  Apartado Postal 0229. Penonom\'e, 
Provincia de Cocl\'e. Panam\'a.}
\email{mcorrales@ancon.up.ac.pa}
\author[D. Mart\'{\i}n]{Dolores Mart\'{\i}n Barquero}
\address{D. Mart\'{\i}n Barquero: Departamento de Matem\'atica Aplicada, Escuela T\'ecnica Superior de Ingenieros Industriales, Universidad de M\'alaga. 29071 M\'alaga. Spain.}
\email{dmartin@uma.es}
\author[C. Mart\'{\i}n]{C\'andido Mart\'{\i}n Gonz\'alez}
\address{C. Mart\'{\i}n Gonz\'alez:  Departamento de \'Algebra Geometr\'{\i}a y Topolog\'{\i}a, Fa\-cultad de Ciencias, Universidad de M\'alaga, Campus de Teatinos s/n. 29071 M\'alaga. Spain.}
\email{candido@apncs.cie.uma.es}
\author[M. Siles ]{Mercedes Siles Molina}
\address{M. Siles Molina: Departamento de \'Algebra Geometr\'{\i}a y Topolog\'{\i}a, Fa\-cultad de Ciencias, Universidad de M\'alaga, Campus de Teatinos s/n. 29071 M\'alaga.   Spain.}
\email{msilesm@uma.es}
\author[J. F. Solanilla]{Jos\'e F. Solanilla Hern\'andez}
\address{J. F. Solanilla Hern\'andez:  Centro Regional Universitario de Cocl\'e: \lq\lq Dr. 
Bernardo Lombardo\rq\rq,  Universidad de  Panam\'a. Apartado Postal 0229. Penonom\'e, 
Provincia de Cocl\'e. Panam\'a.}
\email{jsolanilla@ancon.up.ac.pa}
\subjclass[2000]{Primary 16D70} \keywords{Path algebra, Cohn path algebra, Leavitt path algebra, center.}
\begin{document}

\maketitle

\begin{abstract}
This paper is devoted to the study of the center of several types of path algebras associated to a graph $E$ over a field $K$. In a first step we consider the path algebra $KE$ and prove that if the number of vertices  is infinite then the center is zero; otherwise, it is $K$, except when the graph $E$ is a cycle in which case the center is $K[x]$, the polynomial algebra in one indeterminate. Then we compute the centers of prime Cohn and Leavitt path algebras. A lower and an upper bound for the center of a Leavitt path algebra are given by introducing the graded Baer radical for graded algebras.

\end{abstract}

\section{Introduction}

The notion of center plays an essential role in algebra since its very beginning. It appears, for instance, directly related to the zero dimensional cohomology groups of algebras, immediately followed by
the 1-dimensional cohomology groups, that is, derivations. On the other hand, derivations and center are related by the formula $A^-/Z(A)\cong\text{Innder}(A),$ where $A^-$ stands for the Lie algebra (antisimetrization) associated to the associative algebra $A$  and
$\text{Innder}(A)$ is the Lie algebra of inner derivations of $A$. This means that the center can be
considered as the first step in the study of derivations. And motivated in part by the interest on automorphisms and derivations of path algebras, the study of the center, far from being trivial, is boarded in this work. 
A first attempt in the study of the center of a Leavitt path algebra was achieved by Aranda and Crow in \cite{AC}, where they study the center of Leavitt path algebras and get a full description in several settings, for instance, when the Leavitt path algebra is simple. 
\smallskip

Other interesting related problem motivating the study of the center is that of the simplicity of Lie algebras associated to Leavitt path algebras. This idea appears in the paper \cite{AF}, where the authors find conditions which assures the simplicity of the Lie algebra $[M_m(L_K(n))^-, M_m(L_K(n))^-]$, for $L_K(n)$ the Leavitt algebra of type $(1, n)$, equivalently, the Leavitt path algebra of the $n$-petal rose.
\medskip

In spite of this apparently \lq\lq external\rq\rq\ arguments motivating our interest on the center of path algebras we have being also inspired by the general philosophy  of finding the 
characterization of  algebraic properties in terms of properties of the underlying graph. 
Since the begining of the study of Leavitt path algebras, nine years ago, many algebraic properties have being stated at a simple glance of the graph (for example simplicity, primeness, primitivity, existence of a nonzero socle and many others).  In this spirit, we wanted to describe the center of a path algebra / Cohn path algebra / Leavitt path algebra only by looking at its graph.
\medskip

Our initial interest was also the study of the center of graph algebras directly related to Leavitt path algebras, concretely, path algebras and Cohn path algebras. This work turned out to be illuminating for the task of the description of the center of a Leavitt path algebra.
\medskip

The paper is organized as follows. We start Section 2 with some preliminaries and
study the center of the path $K$-algebra $KE$ associated to a connected graph $E$, for $K$ a field; 
the results in this section allow us  to determine the centers of  prime Cohn  and 
Leavitt path algebras in Section 3. To this task, first we reduce the study to the finite case because we obtain that
if a Cohn or Leavitt path algebra has nonzero center then the number of vertices of the underlying graph
must be finite. In the following step we
center our attention in the $0$ component of the center, which turns out to be
finite dimensional, as any element it contains is proved to be symmetric.
Combining this with Theorem \ref{padespues} we show that if $C_K(E)$ is prime, it
must be the Cohn path algebra associated to the $m$-petals rose graph and its center
is $K$ (Subsection 3.3). In Subsection 3.4 we study the center of prime Leavitt path
algebras.  Here the situation is slightly more complex and we get that the center is
$K$ when every cycle in the graph has an exit and the Laurent polynomial algebra
$K[x,x^{-1}]$ in case there is a (necessarily unique) cycle without exits. In
Section 4 we introduce a graded version of the Baer radical of a graded algebra; it
turns out to be zero for any Leavitt path algebra. This result allows to prove that
any Leavitt path algebra $L_K(E)$ is the subdirect product of a family of prime
Leavitt path algebras and to conclude that the center of $L_K(E)$ is a 
subalgebra of a product of centers of prime Leavitt path algebras. We also find  upper and
lower bounds for the center of  $L_K(E)$.

\section{Preliminaries and the center of the path algebra $KE$}
We shall consider always algebras over a base field $K$.
Let us fix some notation and terminology on graphs and algebras. A (\emph{directed}) \emph{graph} $E=(E^0,E^1,r,s)$ consists of two sets $E^0$ and $E^1$ together with maps
$r,s:E^1 \to E^0$. The elements of $E^0$ are called \emph{vertices} and the elements of $E^1$ \emph{edges}.
For $e\in E^1$, the vertices $s(e)$ and $r(e)$ are called the \emph{source} and \emph{range} of $e$,
respectively, and $e$ is said to be an \emph{edge from $s(e)$ to $r(e)$}. If $s^{-1}(v)$ is a finite set for
every $v\in E^0$, then the graph is called \emph{row-finite}. A vertex which emits no edges is called a \emph{sink}; the vertex will be called a \emph{source}  if it does not receive edges. A vertex $v$ is called an \emph{infinite emitter} if $s^{-1}(v)$ is an infinite set, and a \emph{regular vertex} otherwise.
 Also we shall use  the notation $\Path(E)$ for the set of all paths of $E$ including the vertices as trivial paths. For a path $\lambda=e_1 \cdots e_n \in \Path(E)$ we will call the {\emph{length} of $\lambda$ to the number $n$ of edges appearing in $\lambda$ and will denote it by $l(\lambda)$. Vertices are then paths of length $0$.  In this case, $s(\lambda)=s(e_1)$
and $r(\lambda)=r(e_n)$ are the \emph{source} and \emph{range} of $\lambda$. If $\mu$ is a path in $E$, and if $v=s(\mu)=r(\mu)$, then $\mu$ is called a \emph{closed path based at $v$}.
If $s(\mu)=r(\mu)$ and $s(e_i)\neq s(e_j)$ for every $i\neq j$, then $\mu$ is called a \emph{cycle}. An edge $e$ is an {\it exit} for a path $\mu = e_1 \dots e_n$ if there exists $i\in \{1, \dots, n\}$ such that
$s(e)=s(e_i)$ and $e\neq e_i$. Given paths $\alpha, \beta$, we say $\alpha \leq \beta$ if $\beta =\alpha\gamma$ for some path $\gamma$.
\medskip

If $E_1=(E_1^0,E_1^1,s_1,r_1)$ and $E_2=(E_2^0,E_2^1,s_2,r_2)$ are graphs such that 
$E_2^0=E_1^0$, $E_2^1=E_1^1$, $s_2=r_1$ and $r_2=s_1$, then we will say that $E_2$ is the \emph{opposite} graph of $E_1$. We shall use the notation $E_2=E_1^\circ$. It is easy to see that $KE^\circ\cong (KE)^{\text{op}}$, the opposite algebra of $KE$ with multiplication $x\cdot y=yx$. As a consequence there is
a general principle, which we will call {\sl duality}, stating that if $P$ is a property
that holds for the path algebra of any graph, then a \lq \lq dual  property\rq \rq also
holds in the path algebra of any graph.
If $E$ is a graph and $u,v$ two vertices,
we shall say that $u$ and $v$ are \emph{connected} (denoted by $u \sim v$) if they are connected in the underlying undirected graph of $E$ or, in other
words, if there is a (finite) sequence $u=u_1, u_2, \ldots, u_n=v$ such that for any $i$ there is a path $\pi_i$ such that 
$s(\pi_i)=u_i$, $r(\pi_i)=u_{i+1}$ or $s(\pi_i)=u_{i+1}$, $r(\pi_i)=u_{i}$. We shall adopt the convention that any vertex
is connected to itself by a trivial path. So, one can see immediately that {\em connectedness} is an equivalence
relation whose equivalence classes will be called {\em connected components} of $E^0$. 

\medskip

If we write $E^0=\{u_i\}_i$ and 
consider the Peirce decomposition of the path algebra $A=KE$ given  by 
$A=\oplus A_{uv}$, where $A_{uv}:=uAv$, then $A_{uv}=0$ when $u$ and $v$ are not
connected. So if $E^0=\cup_\alpha E^0_\alpha$, where the $E^0_\alpha$ are the different connected components of the set of vertices, for every $\alpha$
we can define the ideals $$A_\alpha=\bigoplus_{u,v\in E^0_\alpha} A_{uv}$$ and we have  $A=\oplus_\alpha A_\alpha$. In this case it
is trivial to check that $A_\alpha A_\beta=0$ when $\alpha\ne\beta$ and $Z(A)=\oplus_\alpha Z(A_\alpha)$, where $Z(\cdot )$ denotes the center of
the corresponding algebra. This observation means that we can restrict our attention to those algebras $KE$ whose graph $E$ is connected.
\medskip

In the mathematical field of graph theory, the distance between two vertices in an undirected graph is the number of 
edges in a shortest path connecting them. 
This is also known as the {\em geodesic distance}  because it is the length of the graph geodesic between those two vertices. 
If there is no path connecting the two vertices, that is to say, if they belong to different connected components, 
then conventionally the distance is defined as infinite.
The vertex set (of an undirected graph) and the distance function form a metric space if and only if the graph is connected.
If $E$ is a directed graph and $u, v \in E^0$ are in the same connected component, we shall define the distance $d(u,v)$ as the geodesic distance
in the underlying undirected graph associated to $E$. If $u$ and $v$ are nonconnected vertices then we shall write $d(u,v)=\infty$.
\medskip

For a subset $S$ of a vector space $V$ we will denote by $\langle S \rangle$ the linear span of $S$ in $V$.

The path algebra $KE$ has a natural  $\Z$-grading whose homogeneous components are $(KE)_n=0$ if $n<0$, $(KE)_0=E^0$ and the $n$-component, $(KE)_n$, for  $n \ge 1$ is:

$$\langle\{\mu \in \Path(E) \colon l(\mu)=n\}\rangle.$$ 

Let $X$ be a set and $T\colon X \to X$ a map, we  define $\hbox{Fix}(T)$ as the set 
$$\hbox{Fix}(T):=\{ x \in X \colon T(x)=x\}.$$

Let $A$ be an algebra and $x \in A$, we can define  the sets $\ran_A(x)=\{y \in A \colon xy=0\}$ and $\lan_A(x)=\{y \in A \colon yx=0\}$. One easy but relevant property of path algebras is the following:
\begin{lm}\label{nuj1}
 Let $A=KE$, $\mu \in \Path(E)$, $u=s(\mu)$  and $v=r(\mu )$:
\begin{enumerate}
\item[\rm (i)]  $\ran_A(\mu)\cap A_{vw}=0$ for all $w \in E^0$.
\item[\rm (ii)]  $\lan_A(\mu)\cap A_{wu}=0$ for all $w \in E^0$.
\end{enumerate}
\end{lm}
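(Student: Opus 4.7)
The plan is to exploit the natural path basis of $A=KE$ and the observation that left (resp. right) concatenation by a fixed path is an injective operation on the set of paths with matching endpoints. Since $A_{vw}=vAw$ is the $K$-linear span of $\Path(E)_{vw}:=\{\lambda\in\Path(E)\colon s(\lambda)=v,\ r(\lambda)=w\}$, and paths form a $K$-basis of $KE$, the key is to understand what $\mu$ does to such a basis.

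For part (i), I would start with an arbitrary element $y\in\ran_A(\mu)\cap A_{vw}$ and write it as a finite $K$-linear combination $y=\sum_{i} k_i\lambda_i$ with $\lambda_i$ pairwise distinct paths in $\Path(E)_{vw}$ and $k_i\in K$. Since $r(\mu)=v=s(\lambda_i)$, each product $\mu\lambda_i$ is an honest path in $E$ from $u$ to $w$. The crucial step is injectivity: the map $\Path(E)_{vw}\to \Path(E)_{uw}$ sending $\lambda\mapsto\mu\lambda$ is injective, because $\mu$ is a fixed prefix and two paths with a common prefix are equal iff their suffixes are. Hence the $\mu\lambda_i$ are pairwise distinct basis elements of $KE$, and from $0=\mu y=\sum_i k_i(\mu\lambda_i)$ we conclude all $k_i=0$, so $y=0$.

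Part (ii) should then follow by the duality principle spelled out just before the lemma: the opposite graph $E^\circ$ reverses sources and ranges and gives an isomorphism $KE^\circ\cong(KE)^{\mathrm{op}}$, so the statement ``$\lan_A(\mu)\cap A_{wu}=0$'' for $KE$ corresponds to ``$\ran_{A^\circ}(\mu)\cap A^\circ_{uw}=0$'' for $KE^\circ$, which is exactly part (i) applied to the reversed path $\mu$ in $E^\circ$. (Alternatively one can repeat the argument of (i) verbatim, writing an element of $A_{wu}$ in the basis of paths from $w$ to $u$ and right-multiplying by $\mu$, using that $\lambda\mapsto\lambda\mu$ is injective on $\Path(E)_{wu}$.)

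I do not anticipate a real obstacle here: the only subtle point is checking that $\lambda\mapsto\mu\lambda$ is an injection into the basis, which is immediate from the uniqueness of the path factorization once $\mu$ is fixed as a prefix, together with the fact that products of composable paths in $KE$ really are the concatenation (no cancellation occurs because distinct paths are linearly independent). Everything else is bookkeeping with the Peirce decomposition $A=\bigoplus_{u,v}A_{uv}$.
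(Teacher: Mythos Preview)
Your proof is correct and follows essentially the same argument as the paper: write the element in the path basis of $A_{vw}$, use that left concatenation by $\mu$ sends distinct paths to distinct paths (hence to linearly independent elements), and conclude that all coefficients vanish. For (ii) the paper simply says ``analogously'', which matches your alternative of repeating the argument with right multiplication; your duality remark via $E^\circ$ is a valid variant of the same idea.
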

\proof{(i). Consider $x\in A_{vw}$ such that $\mu x=0$. 
Write $x=\sum_i k_i\mu_i$, where $k_i\in K$ and $\mu_i$ are paths with source $v$ and range $w$.
We assume that the $\mu_i$'s are all different. 
Then from $ \mu x=0$ we get $\sum_i k_i\mu \mu_i=0$ and since all the paths
in the set $\{\mu \mu_i\}_i$ are different we know that they are linearly independent. Therefore $k_i=0$ for all $i$ and so $x=0$. The second assertion can be proved analogously.}
\medskip

Observe that for  an associative algebra $A$ with a system $\mathcal E$ of
orthogonal idempotents such that
$$A=\bigoplus_{u,v \in \mathcal E} A_{uv}, \ \ \ A_{uv}=uAv$$
the center $Z(A)$ satisfies  $Z(A)\subset \bigoplus_{u \in\mathcal E}A_{uu}$. Thus any central element $z \in Z(A)$ admits a decomposition $z=\sum_{u \in E^0} z_u$, where $z_u=zu \in A_{uu}$. This decomposition will be called the \emph{Peirce decomposition} of $z$ relative to $\mathcal E$.

\begin{lm}\label{conec1}
Let $z\in Z(KE) \setminus \{0\}$ and $u,\ v \in E^0$ such that $u \sim v$, then $zu\ne 0$ if and only if $zv\ne 0$.
\end{lm}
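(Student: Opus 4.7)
The plan is to reduce to the case of a single connecting path and then translate centrality into an identity that Lemma~\ref{nuj1} can dispatch. By the definition of $u\sim v$, there is a finite chain $u=u_1,u_2,\ldots,u_n=v$ together with paths $\pi_i$ satisfying either $s(\pi_i)=u_i$, $r(\pi_i)=u_{i+1}$, or the reverse. Thus by induction on the length $n$ of this chain, it is enough to prove the statement when $u$ and $v$ are already joined by a single path $\pi$, and furthermore (by symmetry of the claim $zu\ne 0\iff zv\ne 0$) we may assume $s(\pi)=u$ and $r(\pi)=v$.

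Writing the Peirce decomposition of $z$ relative to the set of vertices as $z=\sum_{w\in E^0} zw$, centrality gives $z\pi=\pi z$. Since $\pi=u\pi v$, the left-hand side collapses to $z\pi=(zu)\pi$ and the right-hand side to $\pi z=\pi(zv)$, so we obtain the key identity
\[
(zu)\,\pi \;=\; \pi\,(zv).
\]
Note that $zu\in A_{uu}$ and $zv\in A_{vv}$.

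Now I argue both implications by contraposition. Suppose $zv=0$. Then $(zu)\pi=0$, so $zu\in\lan_A(\pi)\cap A_{uu}$. Since $s(\pi)=u$, Lemma~\ref{nuj1}(ii) (with $w=u$) forces $\lan_A(\pi)\cap A_{uu}=0$, hence $zu=0$. Symmetrically, if $zu=0$, then $\pi(zv)=0$, so $zv\in\ran_A(\pi)\cap A_{vv}$, and Lemma~\ref{nuj1}(i) (with $w=v$), using $r(\pi)=v$, yields $zv=0$. This establishes the equivalence for a single path connection, and hence, via the inductive reduction, for arbitrary $u\sim v$.

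The argument is essentially a direct application of Lemma~\ref{nuj1}, so there is no real technical obstacle; the only point requiring care is the initial reduction to a single connecting path, which must handle both orientations (paths from $u$ to $v$ and from $v$ to $u$) along the chain witnessing $u\sim v$. This is transparent once one notices that the conclusion $zu\ne 0\iff zv\ne 0$ is itself symmetric in $u,v$, so the two orientations are treated identically.
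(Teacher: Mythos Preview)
Your proof is correct and follows essentially the same approach as the paper: both reduce to the case of a single connecting path, derive the identity $(zu)\pi=\pi(zv)$ from centrality, and then invoke Lemma~\ref{nuj1} (i) and (ii) to handle the two implications. The only cosmetic difference is the order of presentation---you set up the inductive reduction first, while the paper treats the single-path case first and then chains---but the content is the same.
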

\proof{If there exists  $\mu \in \Path(E)$ such that $s(\mu)=u$ and $r(\mu)=v$, then note that \begin{equation} \label{ec0}zu\mu=z\mu=\mu z=\mu vz.\end{equation} If $zu= 0$ then $\mu zv=0$, hence $zv \in ran_{KE}(\mu)\cap A_{vv}=0$   by Lemma \ref{nuj1} (i). If $zv=0$ then $zu\mu=0$, that is, $zu \in lan_{KE}(\mu)\cap A_{uu}=0$ by Lemma \ref{nuj1} (ii).
Next we proof the general case: there is a finite sequence $u=u_0,\ u_1,\ldots , u_n=v$ such that for any $i$ there is an $f \in E^1$ such that $s(f)=u_i$ and $r(f)=u_{i+1}$ or  $s(f)=u_{i+1}$  and $r(f)=u_{i}$. Then $zu_i\ne 0$ if and only if $zu_{i+1}\ne 0$; this proves the lemma.}

\medskip
\begin{co}\label{topo}
If $E$ is a connected graph and $Z(KE)\ne 0$, then $\vert E^0\vert$ is finite.
\end{co}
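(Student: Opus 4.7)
The plan is to combine Lemma \ref{conec1} with the fact that any element of $KE$ has finite support (it is a finite $K$-linear combination of paths), which forces only finitely many Peirce components $zv$ to be nonzero; connectedness then propagates \emph{either all or none} of these components being nonzero.

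More precisely, pick any $z\in Z(KE)\setminus\{0\}$ and write its Peirce decomposition $z=\sum_{v\in E^0}zv$ with respect to the complete system of orthogonal idempotents $E^0$. Since $z\neq 0$, there exists at least one vertex $u_0\in E^0$ such that $zu_0\neq 0$. The graph $E$ is connected, so $u_0\sim v$ for every $v\in E^0$, and Lemma \ref{conec1} yields $zv\neq 0$ for every $v\in E^0$.

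On the other hand, $z$ is a finite $K$-linear combination $z=\sum_{i=1}^n k_i\mu_i$ of paths $\mu_i\in\Path(E)$. For any vertex $v\in E^0$, the product $zv=\sum_i k_i\mu_i v$ is nonzero only if $v=r(\mu_i)$ for some $i$ with $k_i\neq 0$ (and the surviving terms, being distinct paths, are linearly independent). Hence $\{v\in E^0:zv\neq 0\}\subset\{r(\mu_1),\dots,r(\mu_n)\}$, which is a finite set.

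Putting the two observations together, $E^0=\{v\in E^0:zv\neq 0\}$ is finite. The only step that requires any care is verifying that $z\neq 0$ produces some $u_0$ with $zu_0\neq 0$; this is immediate from the Peirce decomposition, so no real obstacle arises—the corollary is essentially a direct compilation of Lemma \ref{conec1} and the finiteness of the support of any element of $KE$.
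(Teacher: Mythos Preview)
Your proof is correct and follows essentially the same approach as the paper's: pick a nonzero central element, use Lemma \ref{conec1} together with connectedness to conclude that \emph{every} Peirce component $zv$ is nonzero, and then contrast this with the fact that any element of $KE$ has finite support. The only difference is that you spell out explicitly why the support is finite (via the ranges $r(\mu_i)$), whereas the paper simply asserts that only finitely many summands in the Peirce decomposition are nonzero.
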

\proof{Assume $z=\sum_{u\in E^0} zu \in Z(KE) \setminus \{0\}$, where only a finite number of summands is nonzero.  Take a vertex $u$ such that $zu\ne 0$. For any vertex $v \in E^0$ we have $u \sim v$ and applying Lemma \ref{conec1} we get  $zv\ne 0$. Since only a finite number of summands is non zero, then $E^0$ must be finite.}
\medskip

\subsection{The center of the path algebra $KE$.}

From now on we shall assume that $E$ has a finite number of vertices. Moreover, if $E$ is not connected it must be a finite union of finite connected graphs $E_i$ and $KE$ is a direct sum of the algebras $KE_i$. Furthermore, $Z(KE)$ is the direct sum of the centers $Z(KE_i)$. Hence we can focus our attention on finite connected graphs.

\begin{lm}\label{lemi}
Let $z \in Z(KE)$ with  Peirce decomposition  $z=\sum_{u \in E^0} z_u$. If $f\in E^1$ is such that  $s(f)=u, r(f)=v$, then $z_u \in Ku$ if and only if $z_v \in Kv$.
\end{lm}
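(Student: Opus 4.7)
The plan is to use the centrality condition $zf = fz$ and reduce it to a relation between just $z_u$ and $z_v$, then apply Lemma \ref{nuj1} to conclude.

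First, I would observe that since $s(f) = u$ and $r(f) = v$, we have $uf = f = fv$ and $wf = 0 = fw$ for any other vertex $w$. Because $z_w \in A_{ww}$ for every $w \in E^0$, multiplying $z = \sum_{w} z_w$ on the right by $f$ collapses to $zf = z_u f$, and on the left, $fz = fz_v$. The centrality of $z$ therefore yields the single key identity
\begin{equation*}
z_u f = f z_v.
\end{equation*}

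Next, suppose $z_u \in Ku$, say $z_u = k u$ for some $k \in K$. Then $z_u f = kf$, so the identity becomes $f z_v = kf = f(kv)$, i.e., $f(z_v - kv) = 0$. Since $z_v - kv \in A_{vv}$ and $v = r(f)$, Lemma \ref{nuj1}(i) applied with $\mu = f$ and $w = v$ gives $z_v - kv = 0$, whence $z_v = kv \in Kv$.

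For the converse, suppose $z_v \in Kv$, say $z_v = kv$. Then $fz_v = kf$, so the identity gives $(z_u - ku)f = 0$. Since $z_u - ku \in A_{uu}$ with $u = s(f)$, Lemma \ref{nuj1}(ii) applied with $\mu = f$ and $w = u$ forces $z_u = ku \in Ku$.

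There is no real obstacle here: the argument is essentially a direct translation of centrality into a two-sided equation on $f$, followed by an application of Lemma \ref{nuj1}. The only subtlety worth noting is that the scalar $k$ appearing in $z_u = ku$ is automatically the same scalar appearing in $z_v = kv$, a fact that will presumably be useful in the next step of the paper when scalars are propagated along paths.
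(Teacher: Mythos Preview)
Your proof is correct. Both you and the paper begin with the identity $z_u f = f z_v$, but diverge from there. The paper argues by cases on the scalar: if $z_u = ku$ with $k=0$ it invokes Lemma~\ref{conec1} to get $z_v=0$, while for $k\ne 0$ it uses a degree comparison ($\deg(kf)=1$ forces $\deg(z_v)=0$) to conclude $z_v\in Kv$. You instead rewrite the identity as $f(z_v-kv)=0$ and apply Lemma~\ref{nuj1}(i) directly, with the symmetric use of Lemma~\ref{nuj1}(ii) for the converse. Your route is slightly more economical: it avoids the case split, and as you note it already yields that the \emph{same} scalar $k$ appears on both sides, anticipating part of what the paper establishes separately in Lemma~\ref{nuj0}.
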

\proof{Assume $z_u=ku$; since $z_uf=fz_v$ we have $kf=fz_v$. If $k=0$ then $z_u=0$ and by Lemma \ref{conec1} we get $z_v=0$. In case $k\ne 0$ one has $\deg (kf)=1=\deg (fz_v)$ therefore $\deg (z_v)=0$ and $z_v \in Kv$.
Now, if $z_v=hv$ the same ideas lead one to the conclusion that $z_u \in Ku$.}
\begin{pr}\label{virtue}
If $u \sim v$ then $z_u \in Ku$ if and only if  $z_v \in Kv$.
\end{pr}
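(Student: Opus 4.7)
The plan is to bootstrap Lemma \ref{lemi} from the ``single edge'' case to the ``connected by a finite walk'' case by a straightforward induction along a connecting walk in the underlying undirected graph.

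Since $u \sim v$, unfolding the definition of connectedness there is a finite sequence of vertices $u = u_0, u_1, \dots, u_n = v$ such that for each $i \in \{0, 1, \dots, n-1\}$ there exists an edge $f_i \in E^1$ with either $s(f_i) = u_i,\ r(f_i) = u_{i+1}$ or $s(f_i) = u_{i+1},\ r(f_i) = u_i$. (Note that in the original definition we have paths $\pi_i$ between consecutive $u_i$'s; by subdividing each such path into its constituent edges and inserting the intermediate vertices, we may assume without loss of generality that the $\pi_i$'s are single edges.)

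Now I would prove by induction on $i$ that $z_{u_0} \in Ku_0$ if and only if $z_{u_i} \in Ku_i$. The base case $i=0$ is trivial, and the inductive step from $i$ to $i+1$ is an application of Lemma \ref{lemi} to the edge $f_i$. The lemma is stated for an edge $f$ with $s(f)=u$, $r(f)=v$, but exactly the same argument (or alternatively the duality principle $KE^\circ \cong (KE)^{\text{op}}$ mentioned in the preliminaries) covers the opposite orientation, so both possible directions of $f_i$ are handled. Taking $i=n$ yields the desired equivalence $z_u \in Ku \iff z_v \in Kv$.

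There is essentially no main obstacle here: the only mild subtlety is being careful that Lemma \ref{lemi} applies symmetrically in the two orientations of the connecting edge, which is immediate either by invoking duality or by repeating the short argument (using that $z_v f = f z_u$ would then give the same degree comparison). The induction itself is routine, and that is the reason this statement is a proposition extending the one-edge lemma rather than a substantial new result.
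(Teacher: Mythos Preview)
Your proof is correct and follows essentially the same approach as the paper, which inducts on the geodesic distance and applies Lemma~\ref{lemi} at each step. Note that since Lemma~\ref{lemi} is already stated as an \emph{if and only if}, no separate handling of the two edge orientations is actually needed.
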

\proof{We proceed by induction on the geodesic distance $n$ between $u$ and $v$. For $n=1$ apply Lemma \ref{lemi}. If $n>1$ there are vertices $u=u_1, u_2, \ldots, u_n=v$ such that for any $i \in \{1,\ldots, n\}$ there is an arrow from $u_i$ to $u_{i+1}$ or  from $u_{i+1}$ to $u_i$. So the induction hypothesis implies  $z_{u_{n-1}} \in Ku_{n-1}$ and then by Lemma \ref{lemi},  $z_{u_n}=z_v\in Kv$.}
\medskip

As a consequence of the previous result, for a finite connected graph $E$, if $z \in Z(KE)$ and its Peirce decomposition is $z=\sum_{u \in E^0}z_u$,  we have the following dichotomy:
\begin{enumerate}
\item[\rm (i)]  $z_u \in Ku$ for all $u \in E^0$ .
\item[\rm (ii)]  $z_u \notin Ku$ for all $u \in E^0$.
\end{enumerate}

In the first case the central element is of the form $z=\sum_{u \in E^0}k_u u$, where $k_u \in K$. Next we  prove that all the scalars $k_u$ agree. 

\begin{lm}\label{nuj0}
 Let $z \in Z(KE)\setminus \{0\}$ with Peirce decomposition of the form $z=k u+h v+\sum_{w\ne u,v}z_w$, where $u\ne v$, $k,h\in K^\times$ 
and $z_w\in A_{ww}$. If $u$ and $v$ are connected, then $k = h$.
\end{lm}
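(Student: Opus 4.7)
The plan is to combine Proposition \ref{virtue} (which guarantees that \emph{every} Peirce component of $z$ along the connected component of $u$ is a scalar multiple of the corresponding vertex) with a walk along a connecting sequence of edges between $u$ and $v$, extracting a scalar equality from each edge.

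More precisely, since $z_u = ku \in Ku$ and $u \sim v$, Proposition \ref{virtue} tells me that for every vertex $w$ in the connected component of $u$ the Peirce component $z_w$ has the form $z_w = k_w w$ for some $k_w \in K$. Moreover, since $z_u = ku \ne 0$, Lemma \ref{conec1} gives $z_w \ne 0$ for every such $w$, so in fact $k_w \in K^\times$ for all $w$ connected to $u$. In particular the two scalars $k = k_u$ and $h = k_v$ are nonzero, which is exactly the setting of the statement.

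Now I fix a sequence $u = u_0, u_1, \ldots, u_n = v$ realizing the connectedness of $u$ and $v$, so that for each $i$ there is an edge $f_i \in E^1$ with $\{s(f_i), r(f_i)\} = \{u_i, u_{i+1}\}$. Writing $z_{u_i} = k_i u_i$ with $k_i \in K^\times$, the centrality relation $z f_i = f_i z$ becomes $z_{u_i} f_i = f_i z_{u_{i+1}}$ when $s(f_i) = u_i$, $r(f_i) = u_{i+1}$, i.e. $k_i f_i = k_{i+1} f_i$, forcing $k_i = k_{i+1}$; and it becomes $z_{u_{i+1}} f_i = f_i z_{u_i}$ in the opposite orientation, giving $k_{i+1} f_i = k_i f_i$ and hence again $k_i = k_{i+1}$. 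Chaining these equalities along the sequence yields $k = k_0 = k_1 = \cdots = k_n = h$.

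I do not foresee a real obstacle here: Proposition \ref{virtue} and Lemma \ref{conec1} have already done the conceptual work of propagating \emph{shape} and \emph{nonvanishing} of Peirce components along the connected component, and the remaining task is a one-step scalar matching across a single edge, carried out in both possible orientations. The only point requiring attention is keeping track of the direction of each $f_i$ so as to apply $u_i f_i = f_i$ or $f_i u_{i+1} = f_i$ correctly.
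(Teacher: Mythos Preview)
Your argument is correct. It differs from the paper's own proof in one organizational respect: the paper does \emph{not} invoke Proposition~\ref{virtue}. Instead it proceeds by induction on the geodesic distance $d(u,v)$, and at the inductive step it does not yet know that the intermediate component $z_w$ lies in $Kw$; it deduces this on the spot from $fz_w = kf$ via the annihilator Lemma~\ref{nuj1}, obtaining $z_w - kw \in \ran_{KE}(f)\cap A_{ww}=0$, and only then applies the induction hypothesis to the pair $(w,v)$. Your route front-loads that work by appealing to Proposition~\ref{virtue} once, so that every Peirce component along the chain is already known to be a scalar multiple of its vertex, and then the edge-by-edge step becomes the trivial identity $k_i f_i = k_{i+1} f_i$ in $KE$. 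This is a perfectly legitimate reorganization, since Proposition~\ref{virtue} precedes the lemma in the paper; the appeal to Lemma~\ref{conec1} is in fact unnecessary for the final chain of equalities (the implication $(k_i-k_{i+1})f_i=0\Rightarrow k_i=k_{i+1}$ holds regardless of whether the scalars vanish), but it is harmless.
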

\proof{Proceed by induction on the number $d=d(u,v)$. If $d=1$ 
we may assume without loss in  generality that there is an $f\in E^1$ such that $s(f)=u$ and $r(f)=v$. 
Then $z f=f z$ yields $k f=h f$, hence $k=h$.
Now suppose that the property holds whenever $d<n$. Consider now two vertices $u$ and $v$ such that  $d(u,v)=n$. There is a vertex $w$
with $d(u,w)=1$ and $d(w,v)=n-1$. Then  there exists $f\in E^1$ with either $s(f)=u$, $r(f)=w$ or $s(f)=w$, $r(f)=u$.
In the first case, $fz=zf$ implies  $fz_w=k f$ hence $f (z_w-k w)=0$ and $z_w-k w\in\ran_{KE}(f)\cap A_{ww}=0$  by Lemma \ref{nuj1} (i), so $z_w=kw$. 
Now $z=k u+h v+k w+\sum_{u'\ne u,v,w}z_{u'}$ and applying the induction hypothesis to $v$ and $w$ we get $k=h$. 
In the second case the proof is similar by using Lemma \ref{nuj1} (ii).
}
\medskip

Thus for a connected finite graph $E$, the central elements are of the form (i) $z=k \sum_{u \in E^0}u=k1$, where $k \in K$, or (ii) $z= \sum_{u \in E^0}z_u$, where $z_u \notin Ku$ for all $u \in E^0$. The elements of the form $k1$ will be called \emph{scalars elements}. From now on, we shall investigate under which conditions the path algebra $KE$ has nonscalar central elements.

\begin{de}\rm
Let $S$ denote the set of all nontrivial paths for a graph $E$. We define the map $F_e\colon S \to E^1$ given by $F_e(f_1\ldots f_n)=f_1$. We shall call this map the \lq\lq\emph{first edge}\rq\rq  \ map.
\end{de}

\begin{lm}\label{unicaflecha}
Let  $z$ be a nonscalar central element with Peirce decomposition $z=\sum_{v\in E^0} z_v$. If   $0\ne z_u= \sum_{i \in I} k_i\lambda_i$ with $k_i \in K^{\times}, \lambda_i \in \Path(E)$  and $f   \in E^1\cap s^{-1}(u)$ then $f=F_e(\lambda_i)$ for each $i \in I$.
 \end{lm}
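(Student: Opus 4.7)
My plan is to exploit the centrality of $z$ against the single edge $f$. Let $v=r(f)$. Because $w\cdot f=0$ for every vertex $w\neq u=s(f)$, the sum $zf=\sum_{w\in E^0} z_w f$ collapses to $z_u f$, and dually $fz = f z_v$ since $f\cdot w'=0$ for $w'\neq v$. Thus the identity $zf=fz$ in $KE$ reduces to the very concrete relation
$$z_u f \;=\; f\, z_v$$
inside $A_{uv}$, and this is the only relation I will need.

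Next I would expand both sides as $K$-linear combinations of paths and compare them, using that $\Path(E)$ is a $K$-basis of $KE$. Writing $z_v=\sum_{j\in J}h_j\mu_j$ with each $\mu_j$ a path from $v$ to $v$, the right-hand side becomes $\sum_j h_j\, f\mu_j$, a combination of paths each of which has $f$ as its first edge (or equals $f$ itself if $\mu_j=v$). The left-hand side is $\sum_{i\in I} k_i\,\lambda_i f$, where the first edge of $\lambda_i f$ is $F_e(\lambda_i)$ (with the convention that $\lambda_i f = f$ when $\lambda_i=u$). Since distinct paths are $K$-linearly independent, and $\lambda_i\neq\lambda_{i'}$ implies $\lambda_i f\neq\lambda_{i'} f$, matching coefficients forces every nonzero summand $\lambda_i f$ on the left to coincide with some $f\mu_{j(i)}$ on the right. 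Equality of these two concatenated edge-sequences then forces $\lambda_i$ to begin with $f$, i.e.\ $F_e(\lambda_i)=f$.

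The only real obstacle is the bookkeeping step in the last paragraph: showing that the equation $\lambda_i f = f\mu_{j(i)}$ honestly pins down the first edge of $\lambda_i$. This is immediate because equality of two paths means equality as edge-sequences, so $\lambda_i f=f\mu_{j(i)}$ forces both $\lambda_i=f\,\mu_{j(i)}'$ for some subpath $\mu_{j(i)}'$ and $F_e(\lambda_i)=f$. The only genuinely degenerate case is $\lambda_i=u$ (where $F_e$ is not defined and $\lambda_i f=f$); this case is vacuous under the natural reading of the statement, as $F_e(\lambda_i)=f$ is a claim only about those $\lambda_i$ of positive length, and the trivial summand can only match the $\mu_j=v$ summand on the right without affecting the argument.
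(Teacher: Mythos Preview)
Your argument is correct and follows essentially the same route as the paper's proof: both reduce $zf=fz$ to $z_u f=f z_v$ and then match summands via linear independence of paths to force $\lambda_i f=f\mu_{j(i)}$, hence $F_e(\lambda_i)=f$. The only difference is in the handling of a possible trivial summand $\lambda_i=u$: the paper invokes Proposition~\ref{virtue} (the dichotomy) to assert that all $\lambda_i$ are nontrivial, whereas you treat that case as vacuous since $F_e$ is undefined there; either reading suffices for the intended application in Remark~\ref{jun0}.
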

\proof{Let $v:=r(f)$ and write $z_u=\sum_{i \in I} k_i\lambda_i$ and $z_v=\sum_{j \in J} h_j \mu_j$, with $k_i,h_j \in K^{\times}, \lambda_i, \mu_j \in \Path(E)$. Observe that $\lambda_i$ and $\mu_j$ are nontrivial paths because $z$ is nonscalar and by virtue of Proposition \ref{virtue}. Since $fz=zf, we get z_uf=fz_v$, that is,
\smallskip
\begin{equation}\label{ecuotra}\sum k_i\lambda_if-\sum h_j f\mu_j=0.\end{equation}
\smallskip
We claim that $\{\lambda_i f\}\cap\{f\mu_j \}\ne \emptyset$ because, otherwise, $\{\lambda_i f\}\cup\{f\mu_j  \}$ would be linearly independent, so $k_i=0=h_j$ for all $i\in I,j\in J$ and therefore $z_u=0=z_v$, a contradiction. Next we prove that for every $i \in I$ there exists a unique $j \in J $ such that $\lambda_i f=f\mu_j $. Assume on the contrary  that there exists $i_0 \in I$ such that $\lambda_{i_0}f\notin \{f\mu_j\}$. Then rewrite (\ref{ecuotra}) to get $k_{i_0}\lambda_{i_0}f+ \sum_{i \ne i_0}k_i\lambda_if-\sum h_jf\mu_j=0$. This implies $k_{i_0}=0$, a contradiction. Now from $\lambda_i f=f\mu_j $ we have $F_e(\lambda_i)=f$ for each $i \in I$.}

\medskip

\begin{re}\label{jun0}
 {\rm Lemma \ref{unicaflecha} implies in particular that for a finite connected graph $E$ with nonscalar center there are no bifurcations at any $u \in E^0$.}
\end{re}

\medskip

\begin{lm}\label{nsns}
If $0\ne Z(KE) \not \subset K\cdot 1$ then there are no sinks and no sources in the graph $E$.
\end{lm}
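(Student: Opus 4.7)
The plan is to exploit the dichotomy established right before the lemma: for any $z\in Z(KE)$ on a finite connected graph, either every Peirce component $z_u$ lies in $Ku$, or none of them do. As a preliminary reduction, note that by the discussion at the beginning of Subsection~2.1 we may assume $E$ is finite and connected, because $Z(KE)$ decomposes as a direct sum over the connected components, and if one component of $E$ has a sink (resp.\ source) in the original graph, then that component's path algebra will inherit the nonscalar central element only if its share of $z$ is nonscalar.

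Once this reduction is in place, pick a nonzero nonscalar $z\in Z(KE)$ with Peirce decomposition $z=\sum_{u\in E^0} z_u$. I would argue that $z$ must fall into case (ii) of the dichotomy, namely $z_u\notin Ku$ for every $u\in E^0$. Indeed, if instead every $z_u$ lay in $Ku$, we could write $z=\sum_{u\in E^0} k_u u$ with $k_u\in K$; since $E$ is connected, Lemma~\ref{nuj0} applied successively to any pair of vertices $u,v$ with $z_u,z_v\neq 0$ forces all the nonzero scalars $k_u$ to coincide (and Lemma~\ref{conec1} rules out mixing zero and nonzero $k_u$'s), yielding $z=k\cdot 1$ and contradicting the assumption that $z$ is nonscalar.

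Having placed $z$ in case (ii), I would then close the argument by examining what $uAu$ looks like when $u$ is a sink or a source. If $u$ is a sink, no edge has source $u$, so any path $\lambda$ with $s(\lambda)=u=r(\lambda)$ must be trivial; hence $uAu=Ku$ and in particular $z_u\in Ku$, contradicting case (ii). Dually, if $u$ is a source, no edge has range $u$, so again the only path from $u$ to $u$ is the trivial one, giving $z_u\in Ku$ and the same contradiction. Therefore $E$ has neither sinks nor sources.

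There is no real obstacle here beyond making the reduction to the connected case and applying the dichotomy correctly; the key observation is simply that for a sink or a source $u$, the corner $uAu$ collapses to $Ku$, which is incompatible with the ``nonscalar'' branch of the dichotomy.
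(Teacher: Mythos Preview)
Your argument is correct and follows essentially the same route as the paper: invoke the dichotomy (Proposition~\ref{virtue} together with Lemma~\ref{nuj0}) to force $z_u\notin Ku$ for every vertex $u$, then observe that at a sink or source the corner $u(KE)u$ collapses to $Ku$, yielding a contradiction. The only cosmetic difference is that the paper dispatches the source case by appealing to duality (passing to the opposite graph $E^\circ$), whereas you argue it directly; both are equally valid and equally short.
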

\proof{By duality, it suffices to prove that there are no sinks. Assume that $v \in E^0$ is a sink. Let $z \in Z(KE)\setminus K\cdot 1$ be with Peirce decomposition $z=\sum_{u \in E^0} z_u$; then, by Proposition \ref{virtue}, each $z_u \not \in K u$.  In particular $z_v \not \in K v$. Hence $z_v=\sum_{i\in I} k_i \mu_i$ with $s(\mu_i)=r(\mu_i)=v$, being $\mu_i$  nontrivial, which is a contradiction because $s^{-1}(v)=\emptyset$.}

\begin{pr}
If $0\ne Z(KE) \not \subset K\cdot 1$ then $E$ is a cycle.
\end{pr}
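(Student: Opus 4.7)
The plan is to use Lemma \ref{nsns} together with Lemma \ref{unicaflecha} (and its dual) to force the in-degree and out-degree of every vertex to equal one, after which a short combinatorial argument identifies $E$ with a directed cycle.

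First I would fix a nonzero $z \in Z(KE) \setminus K\cdot 1$. By Proposition \ref{virtue}, its Peirce decomposition $z = \sum_{u \in E^0} z_u$ satisfies $z_u \notin Ku$ for every $u \in E^0$; in particular, every $z_u$ is nonzero. Fix $u \in E^0$. By Lemma \ref{nsns}, $u$ is not a sink, so there exists $f \in s^{-1}(u)$. If $f' \in s^{-1}(u)$ were another edge emitted by $u$, then writing $z_u = \sum_{i \in I} k_i \lambda_i$ in the path basis and applying Lemma \ref{unicaflecha} once with $f$ and once with $f'$ would yield $F_e(\lambda_i) = f$ and $F_e(\lambda_i) = f'$ for every $i \in I$, forcing $f = f'$. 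Hence each vertex emits exactly one edge.

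Next I would apply the duality principle. The isomorphism $KE^\circ \cong (KE)^{\text{op}}$ preserves centers, so the hypotheses on $z$ transfer to $E^\circ$; the previous paragraph applied in $E^\circ$ shows every vertex of $E^\circ$ emits exactly one edge, which translates back to: every vertex of $E$ receives exactly one edge. (One could alternatively prove this directly by establishing a \emph{last-edge} analog of Lemma \ref{unicaflecha} via Lemma \ref{nuj1}(ii).)

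Finally, the finite connected graph $E$ has in-degree and out-degree equal to one at every vertex. The map sending $u$ to the range of its unique outgoing edge is therefore a bijection of the finite set $E^0$, so iterating it partitions $E$ into a disjoint union of directed cycles. Since $E$ is connected in the undirected sense, there is only one such cycle, and $E$ is itself a cycle. I do not expect a real obstacle here: all of the work has already gone into Lemma \ref{unicaflecha}, and the only delicate point is being explicit about how duality converts ``one edge out'' into ``one edge in.''
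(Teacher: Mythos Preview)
Your proof is correct and follows essentially the same route as the paper: both invoke Lemma \ref{nsns} and Lemma \ref{unicaflecha} (equivalently Remark \ref{jun0}) to force out-degree one at every vertex, then finish with a short combinatorial argument. The only cosmetic difference is that you appeal to duality to obtain in-degree one before concluding, whereas the paper walks along the unique outgoing edges and uses the ``no sources'' part of Lemma \ref{nsns} directly to close the cycle.
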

\proof{We know that $E$ is finite, connected, with no sinks and no sources (by Lemma \ref{nsns}) and without bifurcations (see Remark \ref{jun0}). Then it is easy to prove that $E$ is a cycle; by Corollary \ref{topo} we may assume  $\vert E^0 \vert =n \in \N\setminus\{0\}$. The proof is clear for $n=1$ so  assume $n>1$.  Take any vertex  $u_1$; since it is not a sink there is a unique edge $f_1 \in s^{-1}(u_1)$;  define $u_2=r(f_1)$. If $u_1, \cdots ,u_{i-1}$ have been defined for $i<n$ let $u_i=r(f_{i-1})$, where $f_{i-1}$ is the unique edge in $s^{-1}(u_{i-1})$. Next we proof that $u_i \not \in \{u_1, \ldots ,u_{i-1}\}$. Suppose on the contrary $u_i=u_q$ for $1\le q\le i-1$; then $u_{i+1} \in \{u_1, \ldots ,u_{i-1}\}$. Thus $E^0=\{u_1, \ldots ,u_{i-1}\}$, which contradicts the fact that $i\le n$. We conclude that $E^0=\{u_1, \ldots ,u_{n}\}$ and as $u_n$ is not a sink there is only one edge $f_n \in s^{-1}(u_n)$. Now if $r(f_n)=u_i$, with $i\ge 1$, then $u_1$ is a source, a contradiction. Consequently $r(f_n)=u_1$ and $E$ is a cycle.}

\begin{pr}
If $E$ is a cycle then $Z(KE)\cong K[x]$, the polynomial algebra in the indeterminate $x$. More precisely, if $E^0=\{u_1, \ldots ,u_{n}\}$ and $E^1=\{f_1, \ldots ,f_{n}\}$, with $s(f_i)=u_i$ for every $i$, $r(f_i)=u_{i+1}$ for $i=1, \ldots ,n-1$, and $r(f_n)=u_1$, let 
$$  \begin{matrix} c_1 & = & f_1\cdots f_n\cr
c_2&= & f_2 \cdots f_nf_1\cr
\vdots &   & \cr
c_i&=&f_i \cdots f_nf_1 \cdots f_{i-1}.
\end{matrix}$$
Then 
$$Z(KE)=\left\{ \sum_{i=1}^np(c_i)\colon p(x) \in K[x]\right\}$$
and there is an isomorphism from $Z(KE)$ to $K[x]$ such that  $\sum_{i=1}^np(c_i) \mapsto p$.
\end{pr}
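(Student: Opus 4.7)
The plan is first to exhibit enough central elements to generate a copy of $K[x]$ and then to show these exhaust the center. The key computational input is a commutation relation between the cycles $c_i$ and the edges $f_i$, which I would establish at the outset.

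First I would note the orthogonality $c_i u_j = \delta_{ij}\, c_i$ and $c_i c_j = \delta_{ij}\, c_i^2$ (since the $u_i$ are pairwise orthogonal idempotents and $s(c_i)=r(c_i)=u_i$), and the basic identity $c_i f_i = f_i c_{i+1}$ (with indices read mod $n$), which follows directly from concatenating the definitions of $c_i$ and $c_{i+1}$. Inductively this yields $c_i^k f_i = f_i c_{i+1}^k$ for every $k\ge 0$. Define $C := \sum_{i=1}^n c_i$. To see that $C \in Z(KE)$, it suffices to check $C$ commutes with each vertex and each edge, and the vertex case is immediate from the orthogonality relations, while $C f_j = c_j f_j = f_j c_{j+1} = f_j C$ handles the edges. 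Since $c_i c_j = 0$ for $i \neq j$, we have $C^k = \sum_i c_i^k$ for $k \ge 1$, and therefore $p(C) = \sum_{i=1}^n p(c_i)$ for every $p \in K[x]$ (with the convention $c_i^0 = u_i$ so that the constant term distributes as $p(0)\cdot 1$). Thus $\{\sum_i p(c_i) : p\in K[x]\} \subseteq Z(KE)$.

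For the reverse inclusion, take $z \in Z(KE)$ with Peirce decomposition $z = \sum_{i=1}^n z_{u_i}$. Since the only closed paths based at $u_i$ in the cycle $E$ are the powers $c_i^k$ (and $u_i$ itself for $k=0$), each component expands uniquely as $z_{u_i} = \sum_{k\ge 0} \alpha_{i,k}\, c_i^k$ with $\alpha_{i,k}\in K$. Writing $z f_i = f_i z$ in terms of these expansions and using $c_i^k f_i = f_i c_{i+1}^k$ gives
\[
\sum_{k\ge 0} \alpha_{i,k}\, f_i c_{i+1}^k \;=\; \sum_{k\ge 0} \alpha_{i+1,k}\, f_i c_{i+1}^k.
\]
The paths $f_i c_{i+1}^k$ for varying $k$ have distinct lengths $1+kn$ and so are linearly independent in $KE$, forcing $\alpha_{i,k} = \alpha_{i+1,k}$ for every $i$ and $k$. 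Hence $\alpha_{i,k}$ depends only on $k$; call the common value $\alpha_k$. Then $z = \sum_k \alpha_k \sum_i c_i^k = p(C) = \sum_i p(c_i)$ for $p(x) = \sum_k \alpha_k x^k$.

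Finally, the map $\Phi : Z(KE) \to K[x]$, $\sum_i p(c_i) \mapsto p$, is a well-defined $K$-algebra isomorphism: the elements $\{u_1+\cdots+u_n\} \cup \{c_1^k+\cdots+c_n^k : k\ge 1\}$ are linearly independent in $KE$ (they are sums of distinct paths of pairwise distinct lengths), so $\sum_i p(c_i) = 0$ forces $p = 0$, giving well-definedness and injectivity; surjectivity is immediate; and the multiplicativity $p(C)q(C) = (pq)(C)$ plus the identification $\sum_i p(c_i) = p(C)$ makes it an algebra homomorphism. The main obstacle in this argument is the bookkeeping in the previous paragraph: one has to be careful with the cyclic index shift $i \mapsto i+1$ and with the fact that a central element need not itself be scalar, so the argument must simultaneously handle the constant term (which is distributed across all the $u_i$) and the higher-degree terms (which live separately in each Peirce component).
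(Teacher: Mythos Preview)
Your proof is correct and complete. The paper actually states this proposition without proof, so there is no argument of the paper's to compare against; your write-up fills that gap cleanly. The key steps---the commutation relation $c_if_i=f_ic_{i+1}$, the identification of $u_i(KE)u_i$ with $K[c_i]$, and the linear-independence argument forcing all Peirce components to share the same coefficients---are exactly what is needed, and the final verification that $\Phi$ is a well-defined algebra isomorphism is handled correctly via the observation that the elements $C^k=\sum_i c_i^k$ are linearly independent (being sums of disjoint sets of paths of length $kn$).
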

\medskip

We collect the results and remarks above in the following

\begin{theo}\label{teoro}
 If $E$ is a graph then $Z(KE)$ is the direct sum of the centers of the path algebras associated to the connected components
of the graph. If $E$ is connected and has an infinite number of vertices then $Z(KE)=0$. If $E$ is connected and has a finite number
of vertices then
$Z(KE)=K {\cdot} 1$ except if $E$ is a cycle; in this case  $Z(KE)\cong K[x]$.
\end{theo}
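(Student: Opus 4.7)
The proof is a direct assembly of the results established in the preceding subsections, so my plan is essentially to verify that the three cases of the theorem correspond exactly to the statements already proved, and that the dichotomy organizing them is exhaustive.

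First I would handle the connected-components reduction. The paper has already observed, via the Peirce decomposition of $A=KE$ against the orthogonal idempotent set $E^0$, that $A=\bigoplus_\alpha A_\alpha$ with $A_\alpha A_\beta=0$ for $\alpha\neq\beta$, where the index runs over the connected components of $E^0$. Each $A_\alpha$ is therefore a two-sided ideal and an algebra in its own right (with unit $\sum_{u\in E^0_\alpha}u$ when $E^0_\alpha$ is finite, otherwise local units), and a central element of $A$ restricts to a central element of each $A_\alpha$. This gives $Z(KE)=\bigoplus_\alpha Z(A_\alpha)$, and $A_\alpha$ is itself the path algebra of the full subgraph on $E^0_\alpha$. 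This step disposes of the first assertion and reduces the remainder to the case where $E$ is connected.

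For connected $E$ with $|E^0|$ infinite, the claim $Z(KE)=0$ is exactly the contrapositive of Corollary \ref{topo}. So there is nothing to add.

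For connected $E$ with $|E^0|$ finite, take $z\in Z(KE)$ with Peirce decomposition $z=\sum_{u\in E^0}z_u$. Proposition \ref{virtue} supplies the dichotomy: either $z_u\in Ku$ for every $u\in E^0$, or $z_u\notin Ku$ for every $u\in E^0$. In the first case, write $z_u=k_u u$; if $z=0$ there is nothing to prove, otherwise pick any $u$ with $k_u\neq 0$, and Lemma \ref{conec1} together with connectedness force $k_v\neq 0$ for every $v\in E^0$. Lemma \ref{nuj0} then yields $k_u=k_v$ for all pairs $u,v$ (the statement is proved pairwise, but connectedness plus transitivity of equality suffices), so $z=k\cdot 1$ with $k\in K$. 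Hence in this case $Z(KE)\subseteq K\cdot 1$, and the reverse inclusion is obvious. In the second case, Lemma \ref{nsns} rules out sinks and sources and Remark \ref{jun0} rules out bifurcations, so the penultimate Proposition forces $E$ to be a cycle, and the final Proposition identifies $Z(KE)$ with $K[x]$ via the explicit map $\sum_{i=1}^n p(c_i)\mapsto p$. Combining these, $Z(KE)=K\cdot 1$ whenever $E$ is connected, finite, and not a cycle, while $Z(KE)\cong K[x]$ when $E$ is a cycle.

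The only genuine subtlety I anticipate is making the dichotomy exhaustive in a way that matches the statement cleanly: one must notice that for a cycle $E$ the scalars $k\cdot 1$ are \emph{also} central, so in the cycle case both alternatives of the dichotomy can be realised by different central elements, and it is the \emph{existence} of a nonscalar central element that triggers the cycle geometry; conversely, absence of such an element gives $Z(KE)=K\cdot 1$. Verifying this compatibility, and handling the trivial $n=1$ subcase of a single loop inside the cycle proposition, are the only points where a careless reading could produce an apparent gap.
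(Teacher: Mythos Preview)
Your proposal is correct and matches the paper's approach exactly: the paper presents Theorem~\ref{teoro} as a summary statement (``We collect the results and remarks above in the following'') with no separate proof, and your assembly invokes precisely the chain of results the paper developed for this purpose---the connected-components reduction, Corollary~\ref{topo} for the infinite case, the Proposition~\ref{virtue} dichotomy together with Lemma~\ref{nuj0} for the scalar branch, and Lemma~\ref{nsns}, Remark~\ref{jun0}, and the two cycle propositions for the nonscalar branch. Your closing remark about the dichotomy being per-element (so that in the cycle case both alternatives are realised) is a correct reading of the logic and not a gap.
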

\medskip

\subsection{Relationship with the center of other classes of algebras.}
Given a graph $E$ we can define the \emph{extended graph} of $E$ as the new graph $\hat{E}=(E^0,E^1\cup (E^1)^*, r', s')$, where $ (E^1)^*=\{e^* \colon e \in E\}$ and the maps $r'$ and $s'$ are defined as $r'\vert_{E^1}=r$, $s'\vert_{E^1}=s$, $r'(e^*)=s(e)$ and $s'(e^*)=r(e)$.
In this subsection we would like to establish some relationships between the center of $K\hat{E}$ and the center of certain types of algebras related to the path algebra $K\hat{E}$.

\medskip 

Consider the following sets of elements in $K\hat E$:
\medskip

\begin{enumerate}    
\item[(CK1)] $e^*e'-\delta _{e,e'}r(e) \ \mbox{ for all } e,e'\in E^1$.
\item[(CK2)] $v-\sum _{\{ e\in E^1\mid s(e)=v \}}ee^* \ \ \mbox{ for every regular
vertex }v\in E^0.$
\end{enumerate}
\medskip

Then the {\it Leavitt path} $K$-{\it algebra} associated to the graph $E$, denoted
$L_K(E)$,  can be described as $L_K(E)=K\hat{E}/J_1$, where 
$J_1$ is  the ideal generated by the elements in (CK1) and (CK2) (see, for example
\cite{AAS}).
\medskip

In the book \cite{AAS} the authors define the {\it Cohn path} $K$-{\it algebra}
associated to $E$, denoted by $C_K(E)$, as $K\hat{E}/J_2$, where 
$J_2$ is the ideal generated by the elements in (CK1).
\medskip

In both cases the described path algebras arise as quotients of the path algebra
over the extended graph module an ideal, and in order to determine their centers, we
may follow a similar scheme.
\medskip

 So, consider an ideal $I$ of $K\hat{E}$ and the algebra $A:=K\hat{E}/I$. Then there is a short  exact sequence

\begin{equation}\label{sequence}\xymatrix{ 
0\ar[r]&I\ar[r]^j& K\hat{E}\ar[r]^p&A\ar[r]&0}\end{equation}
\smallskip

\noindent
where $j$ is the inclusion and $p$ the canonical projection. Thus,  for $I=J_1$ the algebra $A$ is isomorphic to $L_K(E)$ and for $I=J_2$  it is isomorphic to $C_K(E)$.  Observe that  $J_i\cap E^1= \emptyset$ for $i=1,2$.

\begin{pr}
 Let $E$ be a connected graph and $I$ an ideal of $K\hat E$ such that $I\cap E^1= \emptyset$. Define $A:=K\hat E /I$ and consider the short exact sequence in (\ref{sequence}). If $E^0$ is finite then $\lin{p(E^0)}\cap Z(A)=K.1$; otherwise, $\lin{p(E^0)}\cap Z(A)=0$.
\end{pr}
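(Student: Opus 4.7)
The plan is to exploit the hypothesis $I\cap E^1=\emptyset$ to show that centrality forces any element of $\lin{p(E^0)}$ to have equal scalar coefficients on the endpoints of every edge, and then to propagate this equality through the connectedness of $E$.

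Take any $x\in\lin{p(E^0)}\cap Z(A)$ and write $x=\sum_{v\in E^0}k_v p(v)$ as a finite $K$-linear combination (i.e., $k_v\in K$ with $k_v=0$ for all but finitely many $v$). Since $A$ is generated as a $K$-algebra by $p(E^0\cup E^1\cup(E^1)^*)$, centrality of $x$ reduces to commutation with each such generator. Commutation with a vertex image $p(v)$ is automatic because the $p(v)$ are pairwise orthogonal idempotents that $x$ absorbs on each side. For an edge $e\in E^1$ with $s(e)=u$, $r(e)=v$, one has
$$xp(e)=k_u p(e)\quad\text{and}\quad p(e)x=k_v p(e),$$
so $(k_u-k_v)e\in I$. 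Because $I$ is a $K$-subspace of $K\hat E$ and $I\cap E^1=\emptyset$, a nonzero scalar multiple of $e$ cannot belong to $I$; hence $k_u=k_v$ for every edge $e\in E^1$.

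Since $E$ is connected (by assumption), the very definition of connectedness gives, for any two vertices, a finite sequence of edges of $E^1$ joining them (each traversed in either direction). Applying the previous equality along the sequence yields $k_u=k_v$ for all $u,v\in E^0$; call the common value $k$. If $E^0$ is finite, then $\sum_{v\in E^0}p(v)=p(1_{K\hat E})=1_A$, so $x=k\cdot 1_A$, which gives $\lin{p(E^0)}\cap Z(A)\subseteq K\cdot 1_A$; the reverse inclusion is obvious. If $E^0$ is infinite, the finite support of the representation $x=\sum k_v p(v)$ is incompatible with all $k_v$ equal to a nonzero constant, forcing $k=0$, whence $x=0$ and $\lin{p(E^0)}\cap Z(A)=0$.

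The main (though mild) obstacle is the deduction $k_u=k_v$ from $(k_u-k_v)e\in I$, where the hypothesis $I\cap E^1=\emptyset$ is crucially used together with the fact that $I$ is a $K$-subspace. Notably, the argument never needs to invoke commutation with the images $p(e^*)$ of ghost edges, so no hypothesis on $I\cap (E^1)^*$ is required; this is exactly why the conclusion applies uniformly to $J_1$ and $J_2$ in the two subsequent applications to $L_K(E)$ and $C_K(E)$.
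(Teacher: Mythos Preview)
Your proof is correct and follows essentially the same approach as the paper: both arguments use commutation of the central element with $p(e)$ for an edge $e$ to deduce $(k_{s(e)}-k_{r(e)})e\in I$, invoke $I\cap E^1=\emptyset$ to force equal coefficients on adjacent vertices, and then propagate via connectedness (you through a chain of edges, the paper via induction on geodesic distance) before splitting into the finite and infinite cases. Your closing remark that commutation with the $p(e^*)$ is never needed is a nice clarification not made explicit in the paper.
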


\proof{Take $z \in \lin{p(E^0)} \cap Z(A)$. Then $z=\sum_{k \in S} l_k p(u_k)$, with $l_k\in K$, $u_k \in E^0$ and where $S$ can be infinite but only a finite number of the $l_k$'s  are nonzero. Let  $i\ne j$; if the geodesic distance $d(u_i,u_j)=1$ then there is an $f \in E^1$ with $s(f)=u_i$ and $r(f)=u_j$ (if necessary swap $i$ and $j$). Since $p(f)z=zp(f)$ we have $p(f)z=p(f)\sum_{k \in S} l_k p(u_k)=p(\sum_{k \in S} l_k fu_k)=l_jp(f)$ and $zp(f)=\sum_{k \in S} l_k p(u_k)p(f)=l_ip(f)$. Hence $(l_i-l_j)p(f)=0$ and $(l_i-l_j)f \in I=\hbox{Ker}(p)$. If $l_i \ne l_j$ then $f \in I\cap E^1=\emptyset$, a contradiction. Thus $l_i = l_j$. Suppose now that the geodesic distance $d(u_i,u_j)=n>1$; then there exists a vertex $u_k$ with  $d(u_i,u_k)<n$ and $d(u_j,u_k)<n$. Applying a suitable induction hypothesis we have $l_i=l_k=l_j$. As a consequence, if $E^0$ is finite then $z$ is a multiple of the unit and if $E^0$ is 
infinite  some scalar  $l_k $ must be zero hence all of them are zero; therefore $z=0$.}

\begin{co}
If $A$ is the Cohn path algebra or the Leavitt path algebra of a connected graph then  
$$\lin{E^0}\cap Z(A)=\begin{cases}K.1& \text{if } E^0 \text{ is finite} \cr
0 &\text{otherwise.}\end{cases}$$
\end{co}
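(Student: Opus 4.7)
The plan is to apply the preceding proposition directly, with no new work. Both the Cohn path algebra $C_K(E)=K\hat{E}/J_2$ and the Leavitt path algebra $L_K(E)=K\hat{E}/J_1$ are quotients of $K\hat{E}$ by ideals of the required form; moreover, the paragraph introducing $J_1$ and $J_2$ has already recorded that $J_i\cap E^1=\emptyset$ for $i=1,2$, so the hypothesis of the proposition is satisfied in both cases.

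First I would instantiate the proposition twice: once with $I=J_1$ (yielding $A=L_K(E)$) and once with $I=J_2$ (yielding $A=C_K(E)$). In either case the proposition gives
$$\lin{p(E^0)}\cap Z(A) = \begin{cases} K\cdot 1 & \text{if } E^0 \text{ is finite,}\\ 0 & \text{otherwise.}\end{cases}$$
Then I would identify $\lin{p(E^0)}$ with $\lin{E^0}$ as a subspace of $A$: the images $p(v)$ of the distinct vertices are nonzero pairwise orthogonal idempotents in $A$, hence linearly independent, so they may be denoted by the same symbols as their preimages without ambiguity. This matches the formulation in the statement.

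I do not anticipate any real obstacle: the entire content of the corollary is the observation that $L_K(E)$ and $C_K(E)$ fit the framework of the preceding proposition. The only fact required beyond the proposition itself is the linear independence of the (images of the) vertices in $A$, a standard consequence of the explicit basis descriptions available for both Cohn and Leavitt path algebras, and this is what justifies suppressing the $p$ in the displayed statement.
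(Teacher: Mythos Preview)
Your proposal is correct and matches the paper's approach exactly: the paper states this corollary with no proof, treating it as an immediate consequence of the preceding proposition together with the already-noted fact that $J_i\cap E^1=\emptyset$ for $i=1,2$. Your remark about identifying $\lin{p(E^0)}$ with $\lin{E^0}$ via the standard basis of $C_K(E)$ or $L_K(E)$ simply makes explicit what the paper leaves tacit.
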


\section{Centers of prime Cohn and Leavitt  path algebras}
Once we have determined the center of $KE$ we are interested in the study of the centers of the Cohn path algebra $C_K(E)$ and of the Leavitt path algebra $L_K(E)$. 
As it has been said, a first step in the study of the center of a Leavitt path algebra was given in  \cite{AC}, where the authors determined the center of a simple Leavitt path algebra. The following natural step is to try to determine the center of  prime Leavitt path algebras. 

\medskip

The starting point in this section will be to prove that the existence of a  nonzero center for a prime Leavitt or Cohn path algebra forces the finiteness of the number of vertices in the graph. One of the relevant tools in the theory of Cohn and of Leavitt path algebras which we shall need is the natural $\Z-$grading,  where the 
vertices have degree $0$ and the elements of the form $\sigma\tau^*$ have degree $n-m$ for $\sigma$ and $\tau$  paths of lengths $n$ and $m$ respectively. The degree of a homogeneous element $x$ in a Cohn or Leavitt path algebra will be denoted by $\deg(x)$. 
\subsection{ Reduction to the finite case.}
The graphs $E$ in this subsection are not necessarily row-finite unless otherwise specified.
Recall that for a graph $E$ a subset $H\subset E^0$ is said to be \emph{hereditary} when for any two vertices $u,v$ such that there is a path $\mu$ with
$s(\mu)=u$ and $r(\mu)=v$, if $u\in H$ then $v\in H$. A
hereditary set is \emph{saturated} if every regular vertex which feeds into $H$ and only into $H$ is again
in $H$, that is, if $s^{-1}(v)\neq \emptyset$ is finite and $r(s^{-1}(v))\subseteq H$ imply $v\in H$. We recall also that the ideal of the Cohn path algebra or of the Leavitt path algebra $A=C_K(E)$ or $L_K(E)$ generated by a hereditary set $H$ is easily seen to agree with the set of all linear combinations of elements of the form $\alpha\beta^*$, where $\alpha$ and $\beta$ are paths such that $r(\alpha)\in H$.
\medskip

\begin{lm}
If $A=C_K(E)$ or $L_K(E)$ and $\mu \in \Path(E)$ with $v=r(\mu)$, then the left multiplication operator $L_{\mu} \colon A \to A$ given by $a  \mapsto \mu a$ satisfies  $\ker(L_{\mu})\subset \ran_A(v)$. Moreover, $\ker(L_{\mu}) \cap A_{vw}=0$ for all $w \in E^0$.
\end{lm}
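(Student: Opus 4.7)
The key observation is that in both the Cohn path algebra $C_K(E)$ and the Leavitt path algebra $L_K(E)$, the (CK1) relations give us the identity $\mu^*\mu = v$ whenever $v=r(\mu)$. My plan is to establish this small fact first and then use left multiplication by $\mu^*$ as a ``partial inverse'' of $L_\mu$.

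First, I would verify that $\mu^*\mu = r(\mu)$ for every path $\mu = e_1\cdots e_n$. This is a routine induction on the length of $\mu$: for $n=1$ it is exactly the (CK1) relation $e_1^*e_1 = r(e_1)$, and for the inductive step one writes
\[
\mu^*\mu = e_n^*\cdots e_1^*e_1\cdots e_n = e_n^*\cdots e_2^*\, r(e_1)\, e_2\cdots e_n = e_n^*\cdots e_2^* e_2\cdots e_n,
\]
using $r(e_1)=s(e_2)$ and $s(e_2)e_2 = e_2$, then applying the induction hypothesis to $e_2\cdots e_n$. Note that this argument only uses (CK1), so it is valid in both $C_K(E)$ and $L_K(E)$.

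Next, to prove the inclusion $\ker(L_\mu)\subset \ran_A(v)$, I take $a\in A$ with $\mu a=0$ and multiply on the left by $\mu^*$: then
\[
0 = \mu^*(\mu a) = (\mu^*\mu)a = va,
\]
so $a\in \ran_A(v)$ by the definition of $\ran_A$.

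Finally, for the second assertion, suppose $a\in\ker(L_\mu)\cap A_{vw}$. Since $A_{vw}=vAw$ and $v$ is an idempotent, we have $va=a$. But the first part gives $va=0$, so $a=0$. This gives the desired equality $\ker(L_\mu)\cap A_{vw}=0$. I do not anticipate any serious obstacle here; the only mild point is to confirm that the computation of $\mu^*\mu$ does not require the (CK2) relations, which is what ensures the statement holds uniformly in the Cohn and Leavitt cases.
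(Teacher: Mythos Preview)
Your proof is correct and follows exactly the same approach as the paper: multiply $\mu a=0$ on the left by $\mu^*$ and use $\mu^*\mu=v$ to get $va=0$, then combine with $va=a$ for $a\in A_{vw}$. The only addition is your explicit inductive verification of $\mu^*\mu=r(\mu)$ from (CK1), which the paper takes for granted.
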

\proof{If $L_{\mu}(a)=0$ then $\mu a =0$; therefore $\mu^*\mu a=0$, that is, $va=0$ and $a \in \ran_A(v)$. If $a\in A_{vw} \cap \ker(L_{\mu})$ then $a \in \ran_A(v)$; hence $va=a \ (a \in A_{vw})$ or $va=0 \ (a \in \ran_A(v))$; in both cases $a =0$.}
\begin{pr} Let $E$ be any graph and $A=C_K(E)$ or $L_K(E)$. Let $z\in Z(A)\setminus\{0\}$ and consider its Peirce decomposition $z=\sum_w z_w$.
Fix $u,v\in E^0$ and assume that there is some path $\mu$ with $s(\mu)=u$
and $r(\mu)=v$; then $z_u=0$ implies $z_v=0$.
\end{pr}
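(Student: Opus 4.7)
The plan is to mimic the argument used for path algebras in Lemma \ref{conec1}, but replace the ad hoc right/left annihilator lemma for $KE$ by the preceding lemma on $\ker(L_\mu)$, which is the analogue adapted to the Cohn/Leavitt setting.

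First I would exploit centrality to turn the statement into one about the path $\mu$ alone. Write $z\mu=\mu z$ and decompose both sides using the Peirce decomposition $z=\sum_{w\in E^0}z_w$ with $z_w\in A_{ww}$. Since the vertices $\{w\}$ are orthogonal idempotents and $\mu\in uA v$ (that is, $u\mu=\mu=\mu v$), the sums collapse to a single summand on each side:
\[
z\mu=\Big(\sum_w z_w\Big)\mu=z_u\mu,\qquad \mu z=\mu\Big(\sum_w z_w\Big)=\mu z_v.
\]
Hence the identity $z\mu=\mu z$ is equivalent to the (much more tractable) identity
\[
z_u\,\mu=\mu\,z_v.
\]

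Now suppose $z_u=0$. The displayed equation then gives $\mu z_v=0$, i.e.\ $z_v\in\ker(L_\mu)$. Since $z_v\in A_{vv}$ by construction, the preceding lemma applies with $w=v$ and yields
\[
z_v\in\ker(L_\mu)\cap A_{vv}=0,
\]
which is precisely the conclusion. (The kernel lemma is where the Cohn/Leavitt structure enters: it rests on the (CK1) relation $\mu^*\mu=r(\mu)=v$, which lets one left-multiply $\mu z_v=0$ by $\mu^*$ to recover $v z_v=z_v$.)

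I do not expect a genuine obstacle here; the only subtlety worth flagging is that one must genuinely use the kernel lemma rather than the $KE$-version from Lemma \ref{nuj1}, because in $C_K(E)$ or $L_K(E)$ one cannot simply invoke linear independence of paths after cancellation---the ghost edges $e^*$ can produce new relations. Once the problem is reduced to $\mu z_v=0$ with $z_v\in A_{vv}$, the kernel lemma is tailor-made for the job. Note also that the dual statement (switching the roles of $u$ and $v$) would require either an analogous right-multiplication kernel lemma or an appeal to the opposite algebra, but for the statement as given only this one direction is needed.
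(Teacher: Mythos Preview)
Your proof is correct and follows essentially the same line as the paper's own argument: use $z\mu=\mu z$ together with the Peirce decomposition to get $z_u\mu=\mu z_v$, assume $z_u=0$ to obtain $\mu z_v=0$, and then invoke the preceding lemma on $\ker(L_\mu)\cap A_{vv}=0$ to conclude $z_v=0$. The paper's proof is just a terser version of exactly this.
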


\proof{Since $z\mu=\mu z$ we get $z_u\mu=\mu z_v$.  If $z_u=0$ we have
$0=\mu z_v$, that is, $z_v\in\ker(L_\mu)\cap A_{vv}=0$.}

\begin{co} Under the conditions in the previous proposition the set
$$H:=\{u\in E^0\colon z_u=0\}$$ is hereditary.
\end{co}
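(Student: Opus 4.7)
The statement to establish is that $H := \{u \in E^0 \colon z_u = 0\}$ is hereditary. Recall the definition of hereditary given earlier in the excerpt: a subset $H \subset E^0$ is hereditary when, for any vertices $u, v$ such that there exists a path $\mu \in \Path(E)$ with $s(\mu) = u$ and $r(\mu) = v$, the implication $u \in H \Rightarrow v \in H$ holds. Thus the plan is simply to verify this implication directly from the previous proposition.

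The argument goes as follows. Fix $u \in H$ and suppose $v \in E^0$ is such that there is a path $\mu$ with $s(\mu) = u$ and $r(\mu) = v$. By the definition of $H$, we have $z_u = 0$. The previous proposition, applied to this pair $(u, v)$ and this path $\mu$, yields $z_v = 0$, that is, $v \in H$. Since $u$ and $\mu$ were arbitrary, this shows $H$ is hereditary.

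There is no real obstacle: the corollary is a direct restatement of the previous proposition in the language of hereditary sets. The only thing worth being careful about is that the previous proposition requires $z \in Z(A) \setminus \{0\}$, but this hypothesis is carried over by the phrase ``under the conditions in the previous proposition''. No further case analysis is required, and in particular one does not need to worry about whether $H$ is empty or equal to all of $E^0$ (both of these are trivially hereditary).
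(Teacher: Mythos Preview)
Your proof is correct and matches the paper's approach: the paper states this as a corollary without proof, since it follows immediately from the previous proposition by the definition of hereditary. Your argument is exactly the routine verification the paper leaves to the reader.
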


\begin{co}\label{cuatro}Under the conditions in the  proposition above we have
$Az I(H)=0$.
\end{co}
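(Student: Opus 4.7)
The plan is to exploit centrality of $z$ together with the explicit description of $I(H)$ recalled in the preliminaries: every element of $I(H)$ is a linear combination of elements of the form $\alpha\beta^*$ with paths $\alpha,\beta$ satisfying $r(\alpha)\in H$. So the proof reduces to checking that $z$ annihilates each such generator on the left, and then propagating by centrality.

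First I would fix a generator $\alpha\beta^*$ of $I(H)$ with $u:=r(\alpha)\in H$. Since $\alpha=\alpha u$ and $z$ is central, one has $z\alpha=\alpha z=\alpha(uz)=\alpha z_u$. Because $u\in H$ means precisely $z_u=0$, this gives $z\alpha=0$, and therefore $z\alpha\beta^*=0$. By linearity, $zI(H)=0$.

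Finally, for arbitrary $a\in A$ and $x\in I(H)$, centrality of $z$ yields $azx=zax$. Since $I(H)$ is a two-sided ideal, $ax\in I(H)$, and the previous step shows $z(ax)=0$. Hence $AzI(H)=0$, as required.

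The argument is essentially immediate once the previous proposition is in hand; the only step to be careful with is the appeal to the description of $I(H)$ in terms of the monomials $\alpha\beta^*$, which is what lets us reduce annihilation of the whole ideal to annihilation of $\alpha$ alone, and hence to the hypothesis $z_u=0$ for $u\in H$.
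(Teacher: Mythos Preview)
Your proof is correct and follows essentially the same approach as the paper: both use the description of $I(H)$ as linear combinations of $\alpha\beta^*$ with $r(\alpha)\in H$, then exploit centrality of $z$ together with $z_{r(\alpha)}=0$. The only difference is organizational: the paper computes $azq$ in one line by sliding $z$ between $\alpha_i$ and $\beta_i^*$, whereas you first establish $zI(H)=0$ and then invoke the ideal property of $I(H)$; the content is the same.
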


\proof{Take $a\in A$ and $q\in I(H)$. Then $q=\sum_i l_i \alpha_i\beta_i^*$, with $l_i\in K$ and $r(\alpha_i)\in H$ for each $i$. Then $azq=a\sum_i l_i \alpha_i z\beta_i^*=
a\sum_i l_i\alpha_i z r(\alpha_i)\beta_i^*=0$ since $r(\alpha_i)\in H$.}

\begin{pr}\label{finito}If $E$ is a graph and $A=C_K(E)$ or $L_K(E)$ is prime, then $Z(A)\ne 0$ implies that $E^0$ is finite.
\end{pr}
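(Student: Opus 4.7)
The plan is to convert Corollary \ref{cuatro} into an annihilation of two honest two-sided ideals and then invoke primeness. Fix any $z\in Z(A)\setminus\{0\}$ with Peirce decomposition $z=\sum_{w\in E^0}z_w$, and let $H=\{u\in E^0:z_u=0\}$, which the preceding corollary shows is hereditary and for which Corollary \ref{cuatro} gives $Az\,I(H)=0$.

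Next I would upgrade this one-sided statement to a product of two-sided ideals. Because $I(H)$ is two-sided, $A\cdot I(H)\subseteq I(H)$, and therefore $(AzA)\cdot I(H)\subseteq Az\cdot I(H)=0$. The two-sided ideal $AzA$ is nonzero: $C_K(E)$ and $L_K(E)$ possess the standard system of local units formed by finite sums of vertices, so the finitely many vertices in the support of $z$ produce an idempotent $\epsilon$ with $z=\epsilon z\epsilon$, which places $z$ itself inside $AzA$. Primeness of $A$ applied to $(AzA)\cdot I(H)=0$ now forces $I(H)=0$.

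Every vertex $v\in H$ lies in $I(H)$ and is a nonzero idempotent in the Cohn or Leavitt path algebra, so $I(H)=0$ can hold only when $H=\emptyset$; equivalently, $z_u\ne 0$ for every $u\in E^0$. But $z$ is a single element of $A$, so its Peirce decomposition has only finitely many nonzero components, and hence $E^0$ must be finite.

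The only genuine obstacle is the passage from the one-sided annihilation produced by Corollary \ref{cuatro} to a product of two-sided ideals on which primeness can act; this in turn relies on the presence of local units to ensure $z\in AzA$ and on the elementary but crucial fact that vertices are nonzero idempotents, which together make $I(H)=0$ equivalent to $H=\emptyset$. Everything else is routine bookkeeping.
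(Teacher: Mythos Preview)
Your proof is correct and follows essentially the same route as the paper: show $Az\cdot I(H)=0$, invoke primeness to kill $I(H)$, deduce $H=\emptyset$, and conclude finiteness of $E^0$. The only cosmetic difference is that the paper uses $Az$ directly as a two-sided ideal (since $z$ is central, $Az=zA$), whereas you pass to $AzA$ and appeal to local units to get $z\in AzA$; your detour is harmless but unnecessary.
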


\proof{Take a nonzero central element $z$. Since $A$ is prime and $Az$, $I(H)$ are ideals of $A$ whose product is zero by Corollary \ref{cuatro}, we conclude  that either $Az=0$ or $I(H)=0$. But since $z\ne 0$ we have $Az\ne 0$. Thus
$I(H)=0$, hence $H=\emptyset$ and we conclude that $z_u\ne 0$ for any $u$.
This forces the finiteness of $E^0$ since the number of nonzero components
in the Peirce decomposition of $z$ is necessarily finite.}
\medskip

Now we know that in order to have  nonzero center for the prime algebras $A=C_K(E)$ or $L_K(E)$ we need finiteness of $E^0$.
 
\begin{re}\rm It is easy to realize that for any positive integer $n$ there is a graph $E$ with $\vert E^0\vert=n$ such that $L_K(E)$ is prime and with nonzero center. Indeed, consider the graph consisting of $n$ vertices arranged in a single line. This is  a simple algebra $L_K(E)$ whose center is $K$. However, we will prove that  for a prime Cohn path algebra $C_K(E)$ the number of vertices  in $E$ must be $1$. 
\end{re}

Let $E$ be a row-finite graph and $A:=C_K(E)$.  Let $u\in E^0$ and suppose $u$ is not a sink. Write $s^{-1}(u)=\{f_i\colon i=1,\ldots,n\}$. Then  for any nontrivial
path $\mu$ we have 

\begin{equation}\label{ick1}(u-\sum_i f_if_i^*)\mu=0 \ \text{or, equivalently,} \ \mu^*(u-\sum_i f_if_i^*)=0.
\end{equation} 

The element $u-\sum_i f_if_i^*$ is an idempotent and 

\begin{equation}\label{fb1} (u-\sum_i f_if_i^*)A(u-\sum_i f_if_i^*)=K(u-\sum_i f_if_i^*). 
\end{equation}

This follows by (\ref{ick1}) and taking into account $(u-\sum_if_if_i^*)v=\delta_{u,v}(u-\sum_if_if_i^*)$.
Moreover, if $v\in E^0$ is a sink then 

\begin{equation}\label{kaw2}
(u-\sum_i f_if_i^*)Av=vA(u-\sum_i f_if_i^*)=0
\end{equation} 

\noindent
and for any two different vertices $u,v\in E^0$ (neither of them being  sinks) we have

\begin{equation}\label{kaw1}
(u-\sum_i f_if_i^*)A(v-\sum_j g_ig_i^*)=0,
\end{equation}

\noindent
where $s^{-1}(u)=\{f_i\}$ and $s^{-1}(v)=\{g_j\}$. 
\medskip

At this point it would be convenient to recall the elementary characterization of primeness for an associative algebra:
an algebra $A$ is said to be \emph{prime} if and only if for any two elements $a,b\in A$ the fact $aAb=0$ implies $a=0$ or $b=0$. \medskip

At the level of Leavitt path algebras there is a purely graph-theoretic characterization of primeness. Recall that a graph $E$ satisfies \emph{Condition} (MT3) if for every $v,w\in E^0$ there exist $u\in E^0$  and paths $\mu,\tau\in \Path(E)$ such that
$s(\mu)=v$, $s(\tau)=w$ and $r(\mu)=r(\tau)=u$. In recent papers on prime
Leavitt path algebras, the term \lq\lq Condition (MT3)\rq\rq   \ has been replaced by
the more descriptive term \emph{downward directed}. It is proved in \cite{APS2} that in the row-finite case, $L_K(E)$ is prime if and only if the graph $E$ is downward directed. 

\begin{theo}\label{cohnprima}
Let $E$ be a row-finite graph; then the Cohn path algebra $C_K(E)$ is prime if and only if  $\vert E^0\vert =1$.
\end{theo}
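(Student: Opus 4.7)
The plan is to prove the two implications separately.

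For the reverse direction ($|E^0|=1 \Rightarrow C_K(E)$ is prime), row-finiteness and $|E^0|=1$ force $E^1$ to be finite, so $E$ is either a single isolated vertex (giving $C_K(E)\cong K$, which is trivially prime) or the $n$-petal rose $R_n$ with $n\geq 1$. In the rose case I would verify primeness directly from the standard monomial basis $\{\alpha\beta^*\}$ of $C_K(R_n)$: given nonzero $a,b\in C_K(R_n)$, select a summand $\alpha_0\beta_0^*$ of $a$ and a summand $\gamma_0\delta_0^*$ of $b$, and consider the product $a\,(\beta_0\gamma_0^*)\,b$. Using only the (CK1) relation $e_i^*e_j=\delta_{ij}v$ together with length/grading considerations (there is only one vertex, so no vertex mismatches occur), one can check that the summand $\alpha_0\delta_0^*$ survives any cancellation. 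An alternative route would be to exhibit an auxiliary graph $F$ with a $K$-algebra isomorphism $C_K(R_n)\cong L_K(F)$ for a downward-directed $F$ and then invoke the graph criterion of \cite{APS2}.

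For the forward direction I argue the contrapositive: if $|E^0|\geq 2$, pick distinct vertices $u,v\in E^0$ and produce nonzero $a,b\in A:=C_K(E)$ with $aAb=0$, splitting into three cases according to the sink/non-sink status of $u$ and $v$. If both $u$ and $v$ are sinks, then any basis monomial $\alpha\beta^*\in uAv$ must satisfy $s(\alpha)=u$ and $s(\beta)=v$, forcing $\alpha=u$, $\beta=v$ and hence $\alpha\beta^*=uv=0$; so $uAv=0$ with $a=u$ and $b=v$ nonzero vertices. If $u$ is a sink but $v$ is not, write $s^{-1}(v)=\{g_j\}$; then equation (\ref{kaw2}), applied with the roles of $u$ and $v$ interchanged in its statement, yields $(v-\sum_j g_jg_j^*)\,A\,u=0$, and the idempotent $v-\sum_j g_jg_j^*$ is nonzero in $C_K(E)$ (this is the canonical feature that distinguishes the Cohn from the Leavitt algebra). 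If neither is a sink, equation (\ref{kaw1}) immediately gives $(u-\sum_i f_if_i^*)\,A\,(v-\sum_j g_jg_j^*)=0$ with both factors nonzero idempotents. In every case $C_K(E)$ contains two nonzero elements annihilating each other through the algebra, contradicting primeness.

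I expect the main obstacle to be the reverse direction, namely establishing primeness of $C_K(R_n)$ for $n\geq 1$. Unlike the Leavitt path algebra $L_K(R_n)$, whose primeness is a direct consequence of the downward-directed criterion, the Cohn algebra sits above $L_K(R_n)$ via a surjection with nontrivial kernel (generated by the Cohn idempotent $v-\sum e_ie_i^*$), so primeness does not transfer along the quotient. The monomial-basis argument requires a careful analysis to rule out all potential cancellations, while the alternative isomorphism with a Leavitt path algebra of an extended graph requires verifying both the isomorphism and that the extended graph is downward directed.
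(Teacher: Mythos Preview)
Your proposal is correct and matches the paper's proof: the forward direction is identical (the same three cases using (\ref{kaw1}), (\ref{kaw2}), and $uAv=0$ for two sinks), and for the reverse direction the paper takes precisely your second suggested route, invoking \cite[Theorem~1.5.17]{AAS} (with $X=\emptyset$) to identify $C_K(R_n)$ with $L_K(F)$ for the downward-directed graph $F$ obtained by adjoining a new sink $v'$ and $n$ edges from $v$ to $v'$. Your first route (the direct monomial-basis argument) is not pursued in the paper, so if you go that way you are on your own for the cancellation analysis you flagged.
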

\proof{Assume first that $C_K(E)$ is prime. Take $u,v\in E^0$ different. If they
are not sinks then formula (\ref{kaw1}) contradicts the primeness of $A$. If one of them is a sink, then formula
(\ref{kaw2}) contradicts also the primeness of the algebra. Finally, if $u$ and $v$ are sinks we have $uAv=0$  
contradicting once more the primeness of the algebra. Hence $\vert E^0\vert=1$. Then using \cite[Theorem 1.5.17]{AAS} with $X=\emptyset$ the Cohn path algebra $C_K(E)$ is isomorphic to the Leavit path algebra $L_K(F)$ which is prime because $F$ is downward directed (see the graphs below).
$$E : \ \ \ \xymatrixcolsep{3.3pc} \xymatrix@-1.0pc{\bullet^{\emph{v}}\ar@(l,u)@{.}[]\ar@(ul,ur)[]^{f_{n}} \ar@(u,r)[]^{f_{1}}\ar@(ur,dr)[]^{f_{2}}\ar@(r,d)[]^{f_{3}}\ar@(dr,dl)@{.}\ar@(l,d)@{.}\ar@(dl,ul)@{.}} \ \ \ \ \ \ \ \ F :  \\ \xymatrix@-1.0pc{ {\bullet}^{v'}& \bullet^{\emph{v}}\ar@(l,u)@{.}[]\ar@(ul,ur)[]^{f_{n}} \ar@(u,r)[]^{f_{1}}\ar@(ur,dr)[]^{f_{2}}\ar@(r,d)[]^{f_{3}}\ar@(dr,dl)@{.}\ar@(l,d)@{.} \ar@{>} [l]_{(n)} }$$
}

\subsection{Some properties of the centers.} 

In this subsection we shall study properties of the centers of Cohn path
algebras and Leavitt path algebras that will be of interest for our purposes.

\begin{theo}\label{padespues}
Let $A$ be a $\Z$-graded algebra, $A=\oplus A_n$, with an involution $\ast$ such that
$A_n^*=A_{-n}$ for any integer $n$.
Let $Z:=Z(A)$ be its center provided with the induced
$\Z$-grading, i.e.,  $Z=\oplus_n Z_n$, where $Z_n=Z\cap A_n$. Suppose that $Z$ is a domain and $Z_0$ a field.
Then  each component $Z_n$ is isomorphic (as a $Z_0$-vector space) to $Z_0$.
Moreover either $Z=Z_0$  or there is an isomorphism $Z\cong Z_0[x,x^{-1}]$ of graded $F$-algebras.
\end{theo}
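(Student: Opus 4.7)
The plan is to first identify the set of degrees at which the center is nonzero as a subgroup of $\Z$, then use the involution together with the field hypothesis to invert a homogeneous generator, and finally recognize $Z$ as a Laurent polynomial algebra over $Z_0$.

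First, since $*$ is an involutive anti-automorphism of $A$ and $Z$ is commutative, the restriction of $*$ to $Z$ is an algebra automorphism satisfying $(Z_n)^* = Z_{-n}$. Set $S = \{n \in \Z : Z_n \neq 0\}$. I would check that $S$ is a subgroup of $\Z$: $0 \in S$ because $Z_0$ is a field and hence nonzero; $S$ is closed under negation via the involution; and if $n, m \in S$, taking nonzero $a \in Z_n$ and $b \in Z_m$, the product $ab$ is a nonzero element of $Z_{n+m}$ because $Z$ is a domain. Hence $S = d\Z$ for a unique $d \geq 0$. If $d = 0$ then $Z = Z_0$, which is the first alternative.

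Suppose $d \geq 1$ and pick any nonzero $z \in Z_d$. Then $z^* \in Z_{-d}$ is also nonzero, and $zz^* \in Z_0 \setminus \{0\}$ because $Z$ is a domain; since $Z_0$ is a field, $zz^*$ is invertible, so $z$ itself is invertible in $Z$ with $z^{-1} = (zz^*)^{-1}z^* \in Z_{-d}$. Multiplication by $z$ then gives a $Z_0$-linear bijection $Z_n \to Z_{n+d}$ whose inverse is multiplication by $z^{-1}$. Iterating from $Z_0$, I conclude that $Z_{kd} = Z_0\, z^k$ and $Z_{kd} \cong Z_0$ as a $Z_0$-vector space for every $k \in \Z$, while $Z_n = 0$ for $n \notin d\Z$.

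To finish, send $x \mapsto z$ to define a graded $Z_0$-algebra homomorphism $Z_0[x,x^{-1}] \to Z$, where the domain is graded by $\deg(x) = d$. Surjectivity follows from the previous step, and injectivity amounts to $z$ being transcendental over $Z_0$: any relation $\sum_k a_k z^k = 0$ with $a_k \in Z_0$ is a sum of homogeneous elements of distinct degrees, so each $a_k z^k = 0$, and since $Z$ is a domain and $z^k \neq 0$ we get $a_k = 0$. The main obstacle is the invertibility of $z$ in the previous paragraph; this is the only point where both hypotheses are used essentially, since the involution is what supplies a partner in $Z_{-d}$ and the field structure of $Z_0$ is what turns the product $zz^*$ into a unit. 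Once this is done, the remainder is a routine identification of a $\Z$-graded domain generated by a single invertible homogeneous element.
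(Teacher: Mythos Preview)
Your proof is correct and follows essentially the same approach as the paper: both invert a nonzero homogeneous element $z\in Z_d$ via $zz^*\in Z_0\setminus\{0\}$ and the field hypothesis, then use multiplication by $z$ to identify each nonzero component with $Z_0$ and conclude $Z\cong Z_0[x,x^{-1}]$. The only cosmetic difference is that you first show the support $S=\{n:Z_n\ne 0\}$ is a subgroup $d\Z$, whereas the paper picks the minimal positive $n$ with $Z_n\ne 0$ and argues by contradiction that no intermediate degrees occur; these are equivalent formulations of the same step.
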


\proof{If $Z_n\ne 0$ take $0\ne x\in Z_n$. Then $xx^*\in Z_0$. Moreover since $Z$ is a domain $0\ne xx^*\in Z_0$ hence
$x$ is invertible. Thus any nonzero homogeneous element is invertible. 
\newline
Consider now the linear map $L_x\colon Z_0\to Z_n$ such that $L_x(a)=xa$. This is bijective since $x$ is invertible
and so it is an isomorphism of $Z_0$ vector spaces.  If there is no positive integer $n$ such that $Z_n\ne 0$ then $Z=Z_0$.
Suppose on the contrary this is not true and consider the minimum positive integer $n$ such that
$Z_n\ne 0$.  Then $Z=\oplus_{i\in \Z}Z_{in}$ because  if there exists $k \in \Z$ with $in<k<(i+1)n$ and $Z_k\ne 0$ then $Z_kZ_{in}^*\subset Z_{k-in}=0$ so $Z_k=0$. As $Z_n=Z_0x$, then $Z_{in}=Z_0x^i$ and $Z=\oplus_{i\in \Z}Z_{in}=\oplus_{i\in \Z}Z_0x^i$, so $\{x^i \colon i \in \Z\}$ is a basis of $Z$ as a  $Z_0$-vector space. Therefore $Z\cong Z_0[x,x^{-1}]$.
}

\medskip
Recall that an algebra $A$ is \emph{graded simple} if and only if $A^2\ne 0$ and its only graded ideals are $0$ and $A$.

\begin{co}
Let $A=L_K(E)$ or $A=C_K(E)$ be a graded simple algebra. If $Z:=Z(A)\ne 0$ then $Z_0$ is a field and $Z=Z_0$ or $Z=Z_0[x,x^{-1}]$ (the second possibility does not occur if $A$ is simple).
\end{co}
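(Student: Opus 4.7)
The plan is to verify the three hypotheses of Theorem \ref{padespues} for $A$, namely (i) the $\Z$-grading carries an involution $*$ with $A_n^*=A_{-n}$, (ii) $Z$ is a domain, and (iii) $Z_0$ is a field. Hypothesis (i) is built into the construction of $L_K(E)$ and $C_K(E)$: the standard involution fixes each vertex and swaps $e\leftrightarrow e^*$, so it sends a monomial $\sigma\tau^*$ of degree $l(\sigma)-l(\tau)$ to $\tau\sigma^*$ of degree $l(\tau)-l(\sigma)$. The heart of the argument will be to deduce (ii) and (iii) from graded simplicity.

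The key observation is that any nonzero homogeneous central element is invertible. Indeed, if $0\ne z\in Z_n$, then $Az$ is a two-sided ideal (since $z$ is central) which is also graded (since $z$ is homogeneous); by graded simplicity $Az=A$, giving a left inverse. A symmetric argument produces a right inverse, and the two agree by associativity. The inverse $z^{-1}$ lies in $A_{-n}$ (homogeneity of inverses in a $\Z$-graded algebra with $1\in A_0$) and is automatically central, since $az^{-1}=z^{-1}(za)z^{-1}=z^{-1}a$.

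From this observation (iii) is immediate: any nonzero element of $Z_0$ is invertible with inverse in $Z_0$, so $Z_0$ is a field. For (ii), take $a,b\in Z\setminus\{0\}$ with $ab=0$ and decompose $a=\sum a_i$, $b=\sum b_j$ into homogeneous components. Let $a_k$ and $b_l$ be the top-degree nonzero components; then the degree-$(k+l)$ component of $ab$ is $a_kb_l$, and this must vanish. But $a_k$ is homogeneous nonzero central, hence invertible, forcing $b_l=0$, a contradiction. So $Z$ is a domain. Theorem \ref{padespues} then yields $Z=Z_0$ or $Z\cong Z_0[x,x^{-1}]$.

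For the parenthetical remark, note that if $A$ is \emph{simple} (not merely graded simple), then the same principal-ideal argument applied to an arbitrary nonzero $z\in Z$ (no longer requiring $z$ homogeneous, since now all ideals, not only graded ones, must be trivial) shows that every nonzero element of $Z$ is invertible, so $Z$ itself is a field. Since $Z_0[x,x^{-1}]$ contains non-invertible elements such as $x-1$, it cannot arise in this case, and one is forced into $Z=Z_0$. The only point that needs a little care is the homogeneity of the inverse used at the start of the argument; this is standard for $\Z$-graded unital algebras and can be invoked without detailed verification.
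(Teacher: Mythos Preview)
Your proof is correct and follows essentially the same line as the paper: both arguments hinge on the observation that graded simplicity forces every nonzero homogeneous central element to be invertible, whence $Z_0$ is a field and a top-degree comparison shows $Z$ is a domain, so Theorem \ref{padespues} applies. The only addition in your version is that you supply an explicit justification for the parenthetical remark (via the principal-ideal argument for arbitrary $z$ and the non-invertibility of $x-1$ in $Z_0[x,x^{-1}]$), which the paper states but does not prove.
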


\proof{Any nonzero homogeneous element in the center is invertible (because the ideal generated by this element in $A$ is graded and nonzero). In particular, $Z_0$ is a field. Let us prove now that $Z$ is a domain. Take $x,y \in Z$ such that $xy=0$ but $x,y \ne 0$. Write $x=\sum_{i\in \Z}x_i$, with $\ x_i \in Z\cap A_i$ and  $y=\sum_{i\in \Z}y_i$, where $\ y_i \in Z\cap A_i$; consider $n:=\max\{i \colon x_i\ne 0\}$ and $m:=\max\{i \colon y_i\ne 0\}$. Since $xy=0$ we have  $x_ny_m=0$, which is impossible because nonzero homogeneous elements are invertible. }
\medskip

\begin{de}\rm
Let $A$ be the Cohn path algebra $C_K(E)$ or the Leavitt path algebra $L_K(E)$ associated to a graph $E$. Let $z \in A_0$. We say that $z$ is \emph{symmetric} if it is  a linear combination of elements of the form $\mu\mu^*,$ where $\mu \in \Path(E)$.
\end{de}

In what follows we will see that for  $A=L_K(E)$ or $C_K(E)$ and $Z=Z(A)$, any element in the zero component of the center is symmetric, i.e., is invariant under the involution. 

\medskip

If $z\in Z_0$ and we write $z$ as a linear
combination of linearly independent monomials (see \cite[Proposition 1.5.6 and Corollary 1.5.11]{AAS}), then
every nonzero monomial in $z_0$ is of the form $f\mu\tau^*f^*$, where $f\in E^1$ or
it is a scalar multiple of a vertex.  
To prove this statement, write $z=\sum_i l_i f\mu_i\tau_i^*g^*+r$, for $l_i\in K^\times$ (where $K^\times = K \setminus \{0\})$,
$f\ne g$, $f,g\in E^1$, $\mu_i,\tau_i\in \Path(E)$,
with $\deg(\mu_i)=\deg(\tau_i)$ and $r$ stands for the remaining summands in $z$ which
either do not start by $f$ or do not end by $g^*$ (so
we have $f^*rg=0$). Then
$$0=zf^*g=f^*zg=\sum_i l_i \mu_i\tau_i^*\hbox{ implying } 0=\sum_i l_i
f\mu_i\tau_i^* g^*$$
and since the monomials $\{f\mu_i\tau_i^*g^*\}$ are linearly independent we get
$l_i=0$, contradicting the fact that $l_i\in K^\times$.
This proves that central elements of degree zero are linear combinations of vertices
and monomials of the form $f\mu\tau^*f^*$, with
$\mu,\tau\in \Path(E)$. \medskip
\medskip

The set $\{\sigma \mu^* \colon \sigma , \mu \in \Path(E) \}$ is a basis for any Cohn path algebra (see \cite[Proposition 1.5.6]{AAS}). For each of these basic elements we define the \emph{real degree} of $\sigma\mu^*$ (denoted by $\gr(\sigma\mu^*)$) as the length of $\sigma$.
\medskip

\begin{de}\rm
For any $ \omega \in C_K(E)$ define $\gr(\omega)= \max\{\gr(\sigma_i\mu_i^*)\}$, where $\omega=\sum_{i\in I} l_i\sigma_i\mu_i^*$ is the expression of $\omega$ as a linear combination of the elements in the mentioned basis of the algebra.
\end{de}

We can also define the notion of real degree in the context of Leavitt path algebras by considering the basis given in \cite[Corollary 1.5.11]{AAS}. Since each element in this basis is of the form $\lambda\mu^*$ we can define the \emph{real degree} of this element as the length $l(\lambda )$. Then the \emph{real degree} of an arbitrary element can be defined as the maximum of the real degrees of the basis elements in its expression as a linear combination of the basis.
\medskip

Next we proceed to prove that any homogeneous element $z$ of degree zero in the center is
symmetric. 
\medskip

Assume that 
$z=\sum_i l_i \mu f\alpha_i\beta_i^*g^*\mu^*+s+r$, where $l_i\in K$,
$\mu,\alpha_i,\beta_i\in  \Path(E)$, $f,g\in E^1$, $f\ne g$,
$\deg(\alpha_i)=\deg(\beta_i)\ge 0$, $s$ is symmetric being all its summands 
of real degree $\le n$, and $r$ is a linear combination of walks whose real degrees
are $>\deg(\mu)$ and
that either do not start with $\mu f$ or do not
end with $g^*\mu^*$. Observe that we can assume without loss of generality that the
set of walks $\{\mu f\alpha_i\beta_i^*g^*\mu^* \}$ is linearly independent. \medskip

\begin{lm} We have  $f^*\mu^*r\mu g=0$.
\end{lm}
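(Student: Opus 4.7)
The plan is to expand $r$ in the standard basis of monomials $\sigma\tau^*$ for the Cohn or Leavitt path algebra (see \cite{AAS}) and to verify the identity term by term.

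First I would write $r=\sum_k l_k\sigma_k\tau_k^*$ with $l_k\in K^{\times}$ and $\sigma_k,\tau_k\in\Path(E)$. By hypothesis $\gr(\sigma_k\tau_k^*)=l(\sigma_k)>l(\mu)$; since $z$, the sum $\sum_i l_i\mu f\alpha_i\beta_i^* g^*\mu^*$, and $s$ are homogeneous of degree zero, so is $r$, and the basis elements are homogeneous of degree $l(\sigma_k)-l(\tau_k)$, whence $l(\tau_k)=l(\sigma_k)>l(\mu)$ for every $k$. Moreover, each $\sigma_k\tau_k^*$ inherits from $r$ the property that $\sigma_k$ does not start with $\mu f$ or $\tau_k$ does not start with $\mu g$.

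Next I would record the cancellation principle obtained by iterating (CK1): for any path $\alpha$ with $l(\alpha)>l(\mu)$ one has $\mu^*\alpha=\alpha'$ when $\alpha=\mu\alpha'$, and $\mu^*\alpha=0$ otherwise. The strict length inequality is exactly what excludes the degenerate alternative where $\mu$ would be a proper extension of $\alpha$.

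The core of the argument is a two-case analysis driven by the disjunction. If $\sigma_k$ does not start with $\mu f$, then either $\mu\not\le\sigma_k$, in which case $\mu^*\sigma_k=0$, or else $\sigma_k=\mu\sigma'_k$ with $l(\sigma'_k)\ge 1$ and with first edge distinct from $f$, in which case $f^*\sigma'_k=0$ by (CK1); in both subcases $f^*\mu^*\sigma_k=0$. If instead $\tau_k$ does not start with $\mu g$, applying the involution reduces the computation to $(g^*\mu^*\tau_k)^*$, and the same analysis applied to $\tau_k$ (using $l(\tau_k)>l(\mu)$) produces $\tau_k^*\mu g=0$. In either situation $f^*\mu^*\sigma_k\tau_k^*\mu g=0$, and summing over $k$ yields the claim.

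The main obstacle is little more than careful bookkeeping of prefix relations, and the critical ingredient is the strict length inequality, which rules out the subcase in which $\mu$ would be a proper extension of $\sigma_k$ or $\tau_k$. Once that is handled, the rest is a mechanical application of (CK1).
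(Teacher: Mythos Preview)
Your argument is correct and follows the same route as the paper: write each summand of $r$ as a monomial $\sigma\tau^*$ with $l(\sigma)=l(\tau)>l(\mu)$, then use iterated (CK1) to see that $f^*\mu^*\sigma\tau^*\mu g$ can be nonzero only if $\sigma$ begins with $\mu f$ and $\tau$ begins with $\mu g$, contradicting the defining property of $r$. The paper simply carries this out edge-by-edge by writing $\sigma=g_1\cdots g_q$, $\tau=h_1\cdots h_q$ and $\mu=t_1\cdots t_n$, but the content is identical.
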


\proof{Take a summand of $r$, which we know is of the form $g_1\cdots
g_qh_q^*\cdots h_1^*$, with $q>n=\deg(\mu)$.
Then, if $\mu=t_1\cdots t_n$ we have 
$$f^*t_n^*\cdots t_1^*g_1\cdots g_qh_q^*\cdots
h_1^*t_1\cdots t_n g\ne 0$$
if and only if $t_i=g_i=h_i$ for $i=1,\ldots, n$ and $g_{n+1}=f$, $h_{n+1}=g$. But
this implies that the summand $g_1\cdots g_qh_q^*\cdots h_1^*$ starts
by $\mu f$ and ends by $g^*\mu^*$, a contradiction.}
\medskip

\begin{lm} We get $f^*\mu^*s\mu g=0$.
\end{lm}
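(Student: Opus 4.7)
The plan is to expand $s$ using its defining property and then reduce each summand to a multiple of a vertex, at which point the assumption $f\ne g$ forces everything to vanish via the CK1 relation.

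Since $s$ is symmetric with all its summands of real degree $\le n=\deg(\mu)$, I would write
$$s=\sum_{k} c_k\,\nu_k\nu_k^*,\qquad c_k\in K,\ \nu_k\in\Path(E),\ l(\nu_k)\le n.$$
By linearity, it is enough to check that $f^*\mu^*\,\nu_k\nu_k^*\,\mu g=0$ for every $k$. So fix $k$ and analyze $\mu^*\nu_k$ using the standard multiplication on the basis $\{\sigma\tau^*\}$ of $C_K(E)$ (resp.\ the analogous basis for $L_K(E)$, see \cite[Proposition 1.5.6 and Corollary 1.5.11]{AAS}).

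Write $\mu=t_1\cdots t_n$. Since $l(\nu_k)\le l(\mu)$, there are only two possibilities for $\mu^*\nu_k$ to be nonzero: either $\nu_k=\mu$, giving $\mu^*\nu_k\nu_k^*\mu=r(\mu)$, or $\nu_k$ is a strict prefix of $\mu$, say $\mu=\nu_k\eta$ with $l(\eta)>0$, in which case $\mu^*\nu_k=\eta^*$ and $\nu_k^*\mu=\eta$, so $\mu^*\nu_k\nu_k^*\mu=\eta^*\eta=r(\eta)=r(\mu)$. In every remaining case $\mu^*\nu_k=0$. Thus in all cases
$$\mu^*\nu_k\nu_k^*\mu\in\{0,\,r(\mu)\}.$$

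It then suffices to observe that $f^*\,r(\mu)\,g=0$: if $r(f)\ne r(\mu)$ the left factor already vanishes, while if $r(f)=r(\mu)$ we get $f^*r(\mu)g=f^*g=\delta_{f,g}\,r(f)=0$ by the CK1 relation, using $f\ne g$. Concatenating these observations gives $f^*\mu^*\,\nu_k\nu_k^*\,\mu g=0$ for every $k$, and hence $f^*\mu^*s\mu g=0$.

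The step I expect to require the most care is the case analysis for $\mu^*\nu_k$, because one must be sure no configuration has been overlooked (in particular the possibility $l(\nu_k)<l(\mu)$ with $\nu_k$ not comparable to a prefix of $\mu$, which simply gives zero, and ruling out $l(\nu_k)>l(\mu)$ which is excluded by the bound on the real degree of $s$). Beyond that bookkeeping, the argument is a direct computation inside the standard basis, and the hypothesis $f\ne g$ via CK1 does the rest.
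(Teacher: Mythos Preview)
Your proof is correct and follows essentially the same approach as the paper's own argument. The paper writes each symmetric summand as $g_1\cdots g_q g_q^*\cdots g_1^*$ with $q\le n$ and computes $f^*\mu^*(g_1\cdots g_q g_q^*\cdots g_1^*)\mu g$ edge-by-edge via Kronecker deltas, arriving at a multiple of $f^*g=0$; you phrase the same reduction in terms of prefixes of $\mu$, obtaining $\mu^*\nu_k\nu_k^*\mu\in\{0,r(\mu)\}$ and then applying CK1. The only cosmetic remark is that your case ``$r(f)\ne r(\mu)$'' cannot actually occur, since from the decomposition of $z$ one already has $s(f)=s(g)=r(\mu)$; but this does no harm.
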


\proof{Consider a summand of $s$, say $g_1\cdots
g_qg_q^*\cdots g_1^*$, with $q\le n$. 
Then, in case $n>q$ and since $\mu=t_1\cdots t_n$, we have 
$$f^*t_n^*\cdots t_1^*g_1\cdots
g_q g_q^*\cdots g_1^*t_1\cdots t_n g= 
\Pi_{i=1}^q \delta_{g_i,t_i}f^*t_n^*\cdots t_{q+1}^*t_{q+1}\cdots
t_ng=$$ 
$$ \Pi_{i=1}^q \delta_{g_i,t_i} f^*g=0.$$
And if $n=q$ we get similarly $f^*t_n^*\cdots t_1^*g_1\cdots g_q g_q^*\cdots
g_1^*t_1\cdots t_n g=0$.}
\medskip

\begin{theo} Every element of degree zero in the center of $C_K(E)$ or of $L_K(E)$ is symmetric.
\end{theo}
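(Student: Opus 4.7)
The plan is to proceed by induction on the real degree $N$, showing that in any central element $z$ of degree zero the sum of basis-summands of $z$ of real degree exactly $N$ is symmetric. The base case $N=0$ is immediate, since those summands are $K$-linear combinations of vertices and every vertex is self-adjoint under the involution.

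For the inductive step, fix $N\ge 1$ and assume the real-degree-$k$ component of $z$ is symmetric for every $k<N$. If the real-degree-$N$ part of $z$ failed to be symmetric, there would exist an asymmetric basis summand $\sigma\tau^*$ with $l(\sigma)=l(\tau)=N$ and $\sigma\ne\tau$. I would write $\sigma=\mu f\sigma'$ and $\tau=\mu g\tau'$, where $\mu$ is their longest common prefix and $f,g\in E^1$ are distinct, and set $n=l(\mu)<N$. I would then group all summands of $z$ of real degree $>n$ beginning with $\mu f$ and ending with $g^*\mu^*$ into $P=\sum_i l_i\mu f\alpha_i\beta_i^* g^*\mu^*$ (which is nonempty, since $\sigma\tau^*$ contributes), denote by $r$ the remaining summands of real degree $>n$, and by $s$ the summands of real degree $\le n$. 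The inductive hypothesis forces $s$ to be symmetric, so the decomposition $z=P+s+r$ is exactly the one covered by the two preceding lemmas.

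Next I would evaluate $f^*\mu^* z\mu g$ in two different ways. Centrality of $z$, together with $\mu^*\mu=r(\mu)$ and the (CK1) relation $f^*g=0$ (valid since $f\ne g$ share the source $r(\mu)$), gives
$$ f^*\mu^* z\mu g \;=\; z\,f^*\mu^*\mu g \;=\; z\,f^* r(\mu) g \;=\; z\,f^* g \;=\; 0. $$
On the other hand, the two preceding lemmas give $f^*\mu^* s\mu g=0$ and $f^*\mu^* r\mu g=0$, while direct cancellation inside each summand of $P$ collapses $f^*\mu^*\mu f\alpha_i\beta_i^* g^*\mu^*\mu g$ to $\alpha_i\beta_i^*$; hence $f^*\mu^* z\mu g=\sum_i l_i\alpha_i\beta_i^*$. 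Equating, $\sum_i l_i\alpha_i\beta_i^*=0$, and since the monomials $\{\mu f\alpha_i\beta_i^* g^*\mu^*\}_i$ are linearly independent so are the $\{\alpha_i\beta_i^*\}_i$, forcing every $l_i=0$ and contradicting the choice of $\sigma\tau^*$.

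The main obstacle is arranging the decomposition $z=P+s+r$ so that both preceding lemmas apply simultaneously; that is precisely why the induction is organized on real degree, as the required symmetry of the low-real-degree piece $s$ is supplied by the inductive hypothesis. Once the decomposition is in place, the two-way evaluation of $f^*\mu^* z\mu g$ reduces to bookkeeping inside the Cohn/Leavitt relations.
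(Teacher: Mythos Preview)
Your proof is correct and follows essentially the same route as the paper's: the decomposition $z=P+s+r$, the two lemmas killing $f^*\mu^* s\mu g$ and $f^*\mu^* r\mu g$, and the centrality computation $f^*\mu^* z\mu g=0$ leading to $\sum_i l_i\alpha_i\beta_i^*=0$ are exactly what the paper does. The only difference is organizational: you make the induction on real degree explicit and spell out how the longest common prefix $\mu$ produces the distinct edges $f,g$, whereas the paper leaves the inductive structure (and the justification that $s$ is symmetric) largely implicit.
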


\proof{Take $z  \in Z_0$ as before. We have $0=f^*\mu^*\mu g z=f^*\mu^*z\mu g=\sum_i l_i \alpha_i\beta_i^*$. But
the set $\{\alpha_i\beta_i^*\}$ is linearly independent 
because $\{\mu f\alpha_i\beta_i^*g^*\mu^* \}$ is. This implies $l_i=0$ and therefore
$z$ is symmetric.}

\font\hu=msbm10
\def\N{\hbox{\hu N}}
\def\im{\mathop{Im}}

\begin{de}\rm
Let $A$ be an algebra with an involution $\ast$ and $a$ an element in $ A$. Then we define the  operator $T_a$ as the linear map $T_a\colon A \to A$ given by $T_a(x):=a^*xa$.
\end{de}
Note that for any $a, \ b \in A$ we have $T_aT_b=T_{ba}$.

\begin{lm}Let $A$ be $L_K(E)$ or $C_K(E)$ and let $f\in E^1$ be such that
$s(f)=r(f)=u$. Consider the linear map $T_f \colon uAu\to uAu$. Then,  if $\omega$ is a fixed point of $T_f$ of degree zero, we
have $\omega=k u$ for some
scalar $k$.
\end{lm}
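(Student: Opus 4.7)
The plan is to use the real degree $\gr$ introduced above and show that it must be zero for any fixed point $\omega$ of $T_f$ of degree zero in $uAu$, because $T_f$ strictly decreases the real degree of any such element whose basis expansion avoids the vertex $u$. Once $\gr(\omega)=0$ is forced, the conclusion $\omega \in Ku$ is immediate since the only basis monomial of real degree $0$ lying in $uAu$ is $u$ itself.

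The key computation rests on the (CK1) relations. Expanding $\omega$ in the basis of the Cohn (respectively Leavitt) path algebra, every basis monomial of degree zero that contributes to $\omega$ is either the vertex $u$ or has the form $\alpha\beta^*$ with $\alpha,\beta$ non-trivial paths starting at $u$ and with $l(\alpha)=l(\beta)\geq 1$. One has $T_f(u)=f^*uf=f^*f=u$. For a non-vertex basis monomial, $f^*\alpha$ vanishes unless $\alpha$ begins with $f$, in which case $f^*\alpha=\alpha'$ for $\alpha=f\alpha'$, and the symmetric statement holds for $\beta^*f$. Thus $T_f(\alpha\beta^*)$ is either $0$ or equals $\alpha'\beta'^*$, an element of real degree $l(\alpha)-1$. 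Writing $\omega=ku+\omega_1$, where $\omega_1$ collects the non-vertex basis monomials in the expansion, we obtain $T_f(\omega)=ku+T_f(\omega_1)$ with $\gr(T_f(\omega_1))\leq \gr(\omega_1)-1$; the hypothesis $T_f(\omega)=\omega$ then forces $\omega_1=T_f(\omega_1)$, which is impossible unless $\omega_1=0$.

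The main obstacle lies in justifying the inequality $\gr(T_f(\alpha\beta^*))\leq l(\alpha)-1$ when $A=L_K(E)$, because $\alpha'\beta'^*$ need not itself be a basis element and its reduction to the normal form of \cite[Corollary 1.5.11]{AAS} involves the (CK2) relation $v=\sum_{s(e)=v}ee^*$. The point to verify is that each such substitution replaces an inner factor $ee^*$ by $v-\sum_{g\neq e}gg^*$, and neither of these new summands has leftmost path longer than the original, so the real degree is not increased during the reduction. In the Cohn case (CK2) plays no role and $\alpha'\beta'^*$ is already a basis element, making the inequality automatic.
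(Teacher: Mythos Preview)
Your argument is correct and rests on the same core observation as the paper's: $T_f$ shortens monomials. The implementations differ slightly. You formalize the shortening via the real degree $\gr$, writing $\omega=ku+\omega_1$ and deducing $\gr(T_f(\omega_1))\le\gr(\omega_1)-1$; in the Leavitt case this forces you to check that rewriting $\alpha'\beta'^{*}$ into the normal-form basis via (CK2) does not raise the length of the left path, which you handle correctly. The paper avoids this bookkeeping entirely: it works with the \emph{spanning} set $\{h_1\cdots h_k g_k^{*}\cdots g_1^{*}\}$ rather than a basis, observes that any such monomial with some $h_i\ne f$ or some $g_j\ne f$ lies in $\ker(T_f^m)$ for suitable $m$, while $T_f(f^k(f^{*})^k)=f^{k-1}(f^{*})^{k-1}$, and then simply applies $T_f^N$ for $N$ large to conclude $\omega=T_f^N(\omega)\in Ku$. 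Because no basis is invoked, the Leavitt and Cohn cases are handled uniformly with no (CK2) analysis. Your route is perfectly valid; the paper's is just a bit more economical.
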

\proof{Assume $T_f(\omega)=\omega$, then $T_f^n(\omega)=\omega$ for all $n$. 
It is easy to see that any element $h_1\cdots h_k g_1^*\cdots g_k^*$ with $h_i, g_i \in E^1$ and some $h_i\ne
f$ or some
$g_j\ne f$ is in the kernel of $T_f^m$ for suitable $m$. Thus $\omega$ is a linear
combination
of elements of the form $f^n(f^*)^m$. But since $\deg(\omega)=0$ we have $n=m$.
Now $T_f(f^n(f^*)^n)=f^{n-1}(f^*)^{n-1}$  implies that $\omega$ is a scalar
multiple of $u$.}

\begin{co} Let $E$ be a graph. If $f\in E^1$ is such that
$s(f)=r(f)=u$, take $z\in Z_0=Z(A)_0$, for $A=L_K(E)$ or $C_K(E)$.
If $z=\sum z_w$  is the Peirce decomposition of $z$ then $z_u\in
Ku$.\end{co}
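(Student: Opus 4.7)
The strategy is to invoke the preceding lemma: if we can show that $z_u$ lies in $uAu$, has degree zero, and is a fixed point of $T_f$, then the conclusion $z_u \in Ku$ follows immediately. The first two properties are essentially bookkeeping, so the real content is verifying that $T_f(z_u) = z_u$.

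For the placement: by definition $z_u = zu$, and since $z$ is central, $z_u = uzu \in uAu$. Moreover, since $z \in Z_0$ has degree $0$ and $u$ has degree $0$, the product $uzu$ is homogeneous of degree $0$, so $z_u \in (uAu)_0$. Thus $z_u$ is a legitimate input to $T_f\colon uAu\to uAu$.

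For the fixed-point property, note that $f\in uAu$ because $s(f)=r(f)=u$, so also $f^*\in uAu$ in $\widehat E$, and the relation (CK1) gives $f^*f=r(f)=u$. Using centrality of $z$,
\[
f^* z f \;=\; f^*(zf) \;=\; f^*(fz) \;=\; (f^*f)z \;=\; uz \;=\; z_u.
\]
On the other hand, for every $w\neq u$ one has $wf=0$ (since $s(f)=u\neq w$) and $f^*w=0$ (since $r'(f^*)=u\neq w$), hence $f^*z_w f = f^*(w z_w w)f = (f^*w)z_w(wf)=0$. Consequently
\[
T_f(z_u) \;=\; f^* z_u f \;=\; f^*\!\left(\sum_w z_w\right)\! f \;=\; f^* z f \;=\; z_u.
\]

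So $z_u$ is a degree-zero element of $uAu$ fixed by $T_f$, and the preceding lemma forces $z_u\in Ku$. There is no genuine obstacle here; the only thing to be careful about is the trivial but essential observation that the off-diagonal Peirce pieces $z_w$ ($w\neq u$) are annihilated on both sides by $f$ and $f^*$, which is what lets us replace $z_u$ by $z$ inside the conjugation and then exploit centrality together with the CK1 relation $f^*f=u$.
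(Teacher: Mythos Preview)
Your proof is correct and follows essentially the same approach as the paper: show that $z_u$ is a degree-zero fixed point of $T_f$ using centrality together with the (CK1) relation $f^*f=u$, then invoke the preceding lemma. The only cosmetic difference is that the paper first observes $z_uf=fz_u$ (since $zf=z_uf$ and $fz=fz_u$) and then computes $T_f(z_u)=f^*fz_u=z_u$ directly, whereas you compute $f^*zf=z_u$ and then discard the vanishing $f^*z_wf$ terms; the content is the same.
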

\proof{Since $zf=fz$ and $s(f)=r(f)=u$ we have $z_
uf=fz_u$. On the other hand we have
$T_f(z_u)=f^*z_uf=f^*fz_u=z_u$ and by the previous lemma we conclude $z_u\in Ku$.}

\begin{lm} Let $A$ be $L_K(E)$ or $C_K(E)$ and let $c=e_1\dots e_n \in \Path(E)$ be a closed path based at $u$. Consider the linear map $T_c \colon uAu\to uAu$ given by
$T_c(x):=c^*xc$. If $\omega$ is a fixed point of $T_c$ of degree zero, then $\omega \in K u$.
\end{lm}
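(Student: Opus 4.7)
The plan is to mimic the proof of the previous lemma by analyzing the action of iterated $T_c$ on a linear basis of $uAu\cap A_0$. By \cite[Proposition~1.5.6]{AAS} in the Cohn case and \cite[Corollary~1.5.11]{AAS} in the Leavitt case, $uAu\cap A_0$ is spanned by monomials $\sigma\tau^*$ with $\sigma,\tau\in\Path(E)$ satisfying $s(\sigma)=s(\tau)=u$, $r(\sigma)=r(\tau)$ and $l(\sigma)=l(\tau)$. I would first describe $T_c(\sigma\tau^*)=c^*\sigma\tau^*c$ explicitly on each such monomial. Using only the $(CK1)$ relations $e_i^*e_j=\delta_{ij}r(e_i)$, one verifies three mutually exclusive cases: (a) if $\sigma=c\sigma'$ and $\tau=c\tau'$ (necessarily $l(\sigma)\ge n$), then $T_c(\sigma\tau^*)=\sigma'(\tau')^*$; (b) if $\sigma=\tau=e_1\cdots e_k$ is the unique prefix of $c$ of length $k<n$ (including the case $k=0$, where $\sigma=\tau=u$), then $T_c(\sigma\tau^*)=u$; (c) in every other situation, at least one of the factors $c^*\sigma$ or $\tau^*c$ vanishes, so $T_c(\sigma\tau^*)=0$.

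Armed with this trichotomy, an induction on $l(\sigma)$ shows that the $T_c$-orbit of any basis monomial eventually stabilizes in $\{0,u\}$: case (a) strictly decreases $l(\sigma)$ by $n$, while cases (b) and (c) produce $u$ or $0$, both of which are preserved by $T_c$ (noting $T_c(u)=c^*uc=c^*c=u$). Hence for every basis monomial $\sigma\tau^*$ there exists an integer $N(\sigma,\tau)\ge 0$ such that $T_c^M(\sigma\tau^*)\in\{0,u\}$ for all $M\ge N(\sigma,\tau)$.

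Finally, assume $T_c(\omega)=\omega$ and write $\omega=\sum_i\lambda_i\sigma_i\tau_i^*$ as a finite linear combination in the above basis. Choosing $N$ larger than all the thresholds $N(\sigma_i,\tau_i)$, iterating the fixed-point identity $N$ times yields
\[\omega=T_c^N(\omega)=\Bigl(\sum_{i\,:\,T_c^N(\sigma_i\tau_i^*)=u}\lambda_i\Bigr)u\in Ku,\]
as required. The technical heart of the proof is the trichotomy, and within it case~(c): one has to argue that a failure of the $c$-prefix compatibility on either side forces $c^*\sigma$ or $\tau^*c$ to vanish by repeated application of the $(CK1)$ relations. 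Once that bookkeeping is settled, the inductive reduction and the final collection of coefficients are entirely mechanical.
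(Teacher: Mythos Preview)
Your proposal is correct and follows essentially the same approach as the paper. The paper phrases the key observation as a dichotomy (for each monomial $h_1\cdots h_kg_k^*\cdots g_1^*$, some iterate $T_c^m$ sends it to $0$ or to $u$) and pinpoints the required iterate via the Euclidean division $k=qn+r$, whereas you unpack a single application of $T_c$ into the three cases (a)--(c) and reach the same conclusion by induction on $l(\sigma)$; the final ``choose a common $N$ and evaluate $\omega=T_c^N(\omega)$'' step is identical in both.
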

\proof{First we show that for any element  $z=h_1\cdots h_k g_k^*\cdots g_1^*$ the following dichotomy holds: either there is an $m$ such that $T_c^m(z)=0$ or there is an $m$ such that $T_c^m(z)=u$. To prove this, consider  $z=h_1\cdots h_k g_k^*\cdots g_1^*$ such that $T_c^m(z)\ne 0$ for each $m$. Then $0 \ne T_c(z)=e_n^*\dots e_1^* h_1\cdots h_k g_k^*\cdots g_1^* e_1 \dots e_n$.  
\newline
If $n> k$ then $h_i=g_i=e_i$ for all $i \in \{1,\dots k\}$ and $T_c(z)=e_n^*\dots e_{k+1}^*e_{k+1}\dots e_{n}=u$. If $n\le k$, $k=qn+r$, then $T_c^{nq+1}(z)=(c^*)^{nq+1}h_1\cdots h_k g_k^*\cdots g_1^*c^{nq+1}=e_n^*\dots e_{r+1}^*e_{r+1}\dots e_n=u$. 
\newline \indent
Now, consider a degree zero element $\omega$ such that $T_c(\omega)=\omega$. We can write $\omega=\sum_{i \in I} t_i \mu_i \tau_i^*$, with $t_i \in K$ and $\mu_i, \tau_i \in \Path(E)$ such that $l(\mu_i)=l(\tau_i)$. Then $I=A\buildrel\cdot\over\cup B$, where $A$ is the set of all $i \in I$ such that $\mu_i, \tau_i^* \in \ker(T_c^m)$ for some $m$ and $B=I\setminus A$. Since $I$ is finite there exists an $m_1$ such that $T_c^{m_1}(\mu_i\tau_i^*)=0$ for each $i \in A$. For the same reason and taking into account the proved dichotomy there is an $m_2$ such that $T_c^{m_2}(\mu_i\tau_i^*)=u$ for each $i \in B$. Thus, defining $m:=\max (m_1,m_2)$ we have 
$\omega=T_c^m(\omega)=(\sum_{i \in B} t_i)u \in Ku$.}

\begin{co} Let $E$ be a graph and $A=L_K(E)$ or $C_K(E)$.  Take $z\in Z_0=Z(A)_0$ with Peirce decomposition $z=\sum_{v \in E^0} z_v$. If there exists a closed path $c$ based at a vertex $u$, then $z_u\in
Ku$.\end{co}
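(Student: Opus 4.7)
The plan is to reduce the corollary to the preceding lemma by verifying that $z_u$ is a degree-zero fixed point of $T_c$ lying in $uAu$. Two of these three requirements are immediate from the hypotheses. From the Peirce decomposition, $z_u=zu\in uAu$; and since $z\in Z_0$ and the vertex $u$ is homogeneous of degree zero, the product $z_u=zu$ is homogeneous of degree zero as well.

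The crucial step is to show that $T_c(z_u)=z_u$. First I would extract the identity $z_u c=cz_u$ from the centrality of $z$. Writing $z=\sum_v z_v$ with $z_v\in vAv$ and using that $c$ has both source and range equal to $u$, the only term that survives in $zc$ is $z_u c$; symmetrically, the only surviving term in $cz$ is $cz_u$. Centrality of $z$ then yields $z_u c=cz_u$. Next, I would invoke the (CK1) relations to see that $c^*c=u$: writing $c=e_1\cdots e_n$, the identities $e_i^*e_i=r(e_i)=s(e_{i+1})$ telescope $c^*c=e_n^*\cdots e_1^*e_1\cdots e_n$ down to $r(e_n)=u$, and this computation is valid in both $C_K(E)$ and $L_K(E)$ since only (CK1) is used. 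Combining the two identities,
\[
T_c(z_u)=c^*z_u c=c^*(cz_u)=(c^*c)z_u=uz_u=z_u.
\]
The previous lemma then delivers $z_u\in Ku$.

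I do not expect any substantial obstacle: the corollary is essentially a translation of the previous lemma along the map $z\mapsto z_u$. The only point worth flagging is that $cc^*$ typically fails to equal $u$ (particularly in the Cohn setting, where no (CK2) relation is imposed), so the argument must genuinely rely on the one-sided identity $c^*c=u$, which is what the definition of $T_c$ provides. For the same reason, the proof works uniformly for $L_K(E)$ and $C_K(E)$ without needing a case split.
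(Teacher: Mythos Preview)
Your proposal is correct and follows essentially the same argument as the paper: from $zc=cz$ and $s(c)=r(c)=u$ one extracts $z_uc=cz_u$, then computes $T_c(z_u)=c^*z_uc=c^*cz_u=z_u$ using $c^*c=u$, and invokes the preceding lemma. You supply a bit more detail (the verification that $z_u\in uAu$ has degree zero, and the telescoping of $c^*c$ via (CK1)), but the route is identical.
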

\proof{Since $zc=cz$ and $s(c)=r(c)=u$ we have $z_uc=cz_u$. Moreover,
$T_c(z_u)=c^*z_uc=c^*cz_u=z_u$ and by the previous lemma we conclude $z_u\in Ku$.}
\medskip

Now, suppose we are in the following situation:

\[
\xygraph{
!{<0cm,0cm>;<1.5cm,0cm>:<0cm,1.2cm>::}
!{(0,0)}*+{\bullet_{u}}="u"
!{(2,1)}*+{\bullet_{v}}="v"
"u":@(lu,ld)"v"^{\mu}
"v":@(ru,rd)"v"^{c}
}
\]
where $u ,\ v \in E^0$, $c$ is a cycle based at $v$, $\mu=f_1\cdots f_k$ is a path and there is no closed path based at $s(f_i)$ for all $i \in \{1,\cdots ,k\}$. Take $z \in Z(A)_0, \ z=\sum_{w \in E^0}z_w$; then $\mu^*z_u\mu = \mu^*\mu z_v=z_v$ and since $z_vc=cz_v$ we have $z_v \in \text{Fix}(T_c)$, hence $z_v=l v$ for some $l \in K^{\times}$; therefore
\begin{equation}\label{parasi}
\mu^*z_u\mu =lv.
\end{equation}

\begin{lm}
With the notation above, consider $z_u=ku+\sum_il_i\sigma_i\sigma_i^*+\sum_j m_j \mu \gamma_j \gamma_j^*\mu^*$ with $k,\ l_i, m_j \in K$, $u \in E^0$, $\sigma_i, \mu, \gamma_i \in \Path(E)$,  $\{u\}\cup \{\sigma_i\sigma_i^*\}\cup \{\mu \gamma_j \gamma_j^*\mu^*\}$ a linear independent set and the paths $\sigma_i$  not of the form $\mu \gamma_j$. Then  $\sum_j m_j \mu \gamma_j \gamma_j^*\mu^*=h_0\mu\mu^*$ for some scalar $h_0 \in K^{\times}$.
\end{lm}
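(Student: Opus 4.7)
The plan is to invoke equation~(\ref{parasi}), which asserts $\mu^{*}z_u\mu = lv$, and to compute the left-hand side term by term using the three types of summands appearing in the given decomposition of $z_u$. The hypothesis that each $\sigma_i$ is not of the form $\mu\gamma_j$ (interpreted as ``$\sigma_i$ does not begin with $\mu$'') will force the middle summands to contribute only scalar multiples of $v$, and a linear-independence argument will then collapse the $\gamma_j$-sum to a single term.

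First, a direct computation of each piece. Since $\mu^{*}\mu = r(\mu) = v$ (by iteratively applying the (CK1) relations along $\mu$), we get $\mu^{*}(ku)\mu = kv$ and $\mu^{*}(\mu\gamma_j\gamma_j^{*}\mu^{*})\mu = v\gamma_j\gamma_j^{*}v = \gamma_j\gamma_j^{*}$. For the summand $\mu^{*}(\sigma_i\sigma_i^{*})\mu$ one invokes the standard cancellation rule in $C_K(E)$ or $L_K(E)$: since $\sigma_i$ does not begin with $\mu$, the longest common initial segment $p$ of $\sigma_i$ and $\mu$ is a proper prefix of $\mu$. Either $p=\sigma_i$, i.e.\ $\mu = \sigma_i\rho$ for some nontrivial $\rho$ with $s(\rho)=r(\sigma_i)$, and then
\[
\mu^{*}\sigma_i\sigma_i^{*}\mu \;=\; \rho^{*}\sigma_i^{*}\sigma_i\sigma_i^{*}\sigma_i\rho \;=\; \rho^{*}\,r(\sigma_i)\,\rho \;=\; \rho^{*}\rho \;=\; v;
\]
or $p$ is strictly shorter than both, in which case $\mu^{*}\sigma_i = 0$. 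Either way the contribution lies in $Kv$, so collecting everything and applying (\ref{parasi}) gives
\[
lv \;=\; \mu^{*}z_u\mu \;=\; \kappa v + \sum_j m_j\gamma_j\gamma_j^{*}
\]
for some $\kappa\in K$, and hence $\sum_j m_j\gamma_j\gamma_j^{*} = (l-\kappa)v$ in $vAv$.

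To finish, observe that the linear independence of $\{\mu\gamma_j\gamma_j^{*}\mu^{*}\}$ forces the $\gamma_j$ to be pairwise distinct, so $\{\gamma_j\gamma_j^{*}\}$ is a set of pairwise distinct basic monomials of $A$ and hence linearly independent. The displayed equality $\sum_j m_j\gamma_j\gamma_j^{*} = (l-\kappa)v$ then forces $m_j=0$ whenever $\gamma_j\ne v$; at most a single index $j_0$ with $\gamma_{j_0}=v$ can survive, and the original sum reduces to
\[
\sum_j m_j\mu\gamma_j\gamma_j^{*}\mu^{*} \;=\; m_{j_0}\mu\mu^{*} \;=\; h_0\mu\mu^{*},
\]
with $h_0 := m_{j_0} = l-\kappa$. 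The main technical hurdle I anticipate is the case analysis for $\mu^{*}\sigma_i\sigma_i^{*}\mu$: one must carefully verify that the hypothesis on the $\sigma_i$ is exactly what guarantees the contribution lands in $Kv$, with no spurious monomials surviving on the right-hand side that would obstruct the final identification.
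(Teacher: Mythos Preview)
Your approach is essentially the paper's: both compute $\mu^*z_u\mu$ term by term, use the hypothesis on the $\sigma_i$ to see that those summands contribute only scalar multiples of $v$, and invoke (\ref{parasi}) to obtain $\sum_j m_j\gamma_j\gamma_j^* \in Kv$. Your final linear-independence argument is an unnecessary (and slightly delicate) detour---in $L_K(E)$ the vertex $v$ need not be independent of $\{\gamma_j\gamma_j^*:\gamma_j\ne v\}$ because of (CK2)---whereas the paper simply reads the conclusion off by multiplying the identity $\sum_j m_j\gamma_j\gamma_j^*=h_0v$ on the left by $\mu$ and on the right by $\mu^*$.
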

\proof{From the expression  of $z_u$ we get $z_u\mu=k\mu+\sum_il_i\sigma_i\sigma_i^*\mu+\sum_j m_j \mu \gamma_j \gamma_j^*$. It is easy to prove that if $\sigma_i\sigma_i^*\mu \ne 0$ then $\sigma_i\sigma_i^*\mu=\mu$, so $z_u\mu=k\mu+\sum_il_i\mu+\sum_jm_j\mu\gamma_j\gamma_j^*$ (observe that in this last sum with $l_i$'s we may have less summands than in the original one). Since $\mu^*z_u\mu=kv+\sum_il_iv+\sum_jm_j\gamma_j\gamma_j^*$ and by the formula (\ref{parasi}) we have  $\mu^*z_u\mu=kv+\sum_il_iv+\sum_jm_j\gamma_j\gamma_j^*=\lambda v$, which proves the lemma.
}
\begin{pr}\label{finited}
 Let $E$ be a finite and row-finite graph and $A=L_K(E)$ or $C_K(E)$. Then  $\text{dim}_K(Z_0)$ is finite.
\end{pr}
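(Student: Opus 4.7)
My plan is to bound, for each $u\in E^0$, the dimension of the subspace $uZ_0 := \{z_u : z \in Z_0\} \subseteq uAu$, where $z_u$ denotes the Peirce component of $z \in Z_0$ at $u$. Since $E^0$ is finite and the Peirce decomposition embeds $Z_0$ into $\bigoplus_{u \in E^0} uAu$, this will yield $\dim_K Z_0 < \infty$.

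The central relation $zf=fz$ for an edge $f$ with $s(f)=u$, $r(f)=v$ gives $z_u f = f z_v$, and since $L_f$ is injective on $vAv$ by the lemma at the start of the section, we have $z_v = f^* z_u f$. Using the theorem that $z_u$ is symmetric, I would write $z_u = k_u u + \sum_\mu k_\mu \mu\mu^*$ over nontrivial paths $\mu$ starting at $u$, and expand $f^* z_u f$ using $e^*f=\delta_{e,f}r(f)$: only the summands indexed by $\mu=u$, $\mu=f$ and $\mu=f\nu$ (for nontrivial $\nu$ at $v$) survive, giving
\begin{equation*}
z_v \;=\; (k_u+k_f)\,v \;+\; \sum_{\nu\in\Path(E),\;s(\nu)=v,\;l(\nu)\ge 1} k_{f\nu}\,\nu\nu^*.
\end{equation*}
Thus $z_v$ together with the scalar $k_u$ recursively determines every coefficient of $z_u$ associated with a path beginning with $f$.

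To convert this recursion into a finite-dimensionality statement, partition $E^0$ into the set $C$ of vertices lying on some closed path and $N := E^0 \setminus C$. For $v \in C$ the preceding corollary yields $z_v \in Kv$, so $\dim(vZ_0) \leq 1$. The full subgraph of $E$ on $N$ is acyclic (a closed path in $N$ would push its vertices into $C$) and, $E$ being finite, is a finite DAG. I would then induct on the length of the longest directed path in $N$ starting at $u\in N$: in the base case every outgoing edge from $u$ lands in $C$ or at a sink, so the recursion expresses $z_u$ in terms of $k_u$ and one scalar per child; the inductive step invokes the recursion once more, with each child $w$ providing $z_w$ in a finite-dimensional space by the inductive hypothesis.

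The step I expect to require most care is the coefficient comparison: in $L_K(E)$ the elements $\{\mu\mu^*\}$ are not linearly independent, owing to the (CK2) relations, so the symmetric expansion $z_u = k_u u + \sum_\mu k_\mu \mu\mu^*$ need not be unique. I would handle this by working with the explicit $K$-bases of \cite[Proposition 1.5.6]{AAS} for $C_K(E)$ and of \cite[Corollary 1.5.11]{AAS} for $L_K(E)$, so that the recursion is phrased via honest basis coefficients rather than the ambiguous formal sum. Once that bookkeeping is settled, finiteness of $E^0$ combined with the induction closes the argument.
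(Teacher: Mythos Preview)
Your proof is correct and uses the same underlying ingredients as the paper: symmetry of $z_u$, the corollary that $z_v\in Kv$ whenever $v$ lies on a closed path, and the transfer relation $z_{r(f)}=f^*z_uf$. The organization differs, however. The paper invokes the immediately preceding lemma, which says that for any path $\mu$ from $u$ to a cycle (with no intermediate vertex on a closed path) the block $\sum_j m_j\,\mu\gamma_j\gamma_j^*\mu^*$ collapses to a single term $h_0\,\mu\mu^*$; from this it reads off directly that only finitely many paths can occur in $z_u$. You instead skip that lemma and run an explicit induction on the acyclic part $N=E^0\setminus C$ via the one-step recursion $z_v=f^*z_uf$. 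Both arguments rest on the same corollary as their anchor; yours trades the collapsed-path lemma for a longer but more transparent DAG induction.

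One remark on the bookkeeping you flag: in $L_K(E)$ you can avoid wrestling with the basis of \cite[Corollary~1.5.11]{AAS} altogether. For a regular vertex $u$ the relation (CK2) gives $u=\sum_{f\in s^{-1}(u)}ff^*$, and since $z_u$ is symmetric one checks $f^*z_ug=0$ for $f\ne g$, so $z_u=\sum_f f\,(f^*z_uf)\,f^*$. Thus $z_u$ is already determined by the tuple $(f^*z_uf)_f=(z_{r(f)})_f$, and your injection $uZ_0\hookrightarrow\prod_f r(f)Z_0$ needs no separate $k_u$-coordinate in the Leavitt case. For $C_K(E)$ the elements $\{u\}\cup\{\mu\mu^*\}$ are linearly independent, so the coefficient $k_u$ is unambiguous and your map $z_u\mapsto (k_u,(f^*z_uf)_f)$ is visibly injective.
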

\proof{We deduce from the  lemma above that if there exists a path $\mu$ such that  $s(\mu)=u$ and $\mu$ connects to a cycle $c$ then, in the expression  of $z_u=ku+\sum_il_i\sigma_i\sigma_i^*+\lambda_0\mu\mu^*$, the
cycle $c$ does not appear. We can argue as before if we consider any other cycle and path with source $u$ connecting to the cycle.  Since we deal with finite graphs  the set of paths which may appear in the expression of $z_u$ is finite and so
 $\dim_K(Z_0)$ is finite.}

\begin{lm}\label{capu}
If $A=L_K(E)$ or $C_K(E)$ is prime then $Z_0=Z(A)_0=K$.
\end{lm}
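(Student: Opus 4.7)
The plan is to split according to whether $A = C_K(E)$ or $A = L_K(E)$ and exploit the strong structural reductions already available in each case.

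For $A = C_K(E)$, Theorem \ref{cohnprima} immediately forces $|E^0| = 1$, so $E$ is a single vertex $v$ with a (possibly empty) set of loops $\{f_i\}_{i \in I}$. By the theorem on symmetry in degree zero, any $z \in Z_0$ has a finite symmetric expansion $z = \sum_\mu k_\mu \mu\mu^*$, summed over paths $\mu$ based at $v$. Writing out the commutation $zf_i = f_iz$, using CK1 to simplify $f_j^*f_i = \delta_{ij}v$, and invoking the basis of $C_K(E)$ given in \cite[Proposition 1.5.6]{AAS} (where CK2 is not imposed), I would match coefficients to obtain $k_{f_i} = 0$ and $k_{f_i\nu} = k_\nu$ for every nontrivial path $\nu$ based at $v$. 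Iterating kills every $k_\mu$ for nontrivial $\mu$, leaving $z = k_v v = k_v \cdot 1 \in K$.

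For $A = L_K(E)$, Proposition \ref{finito} gives $|E^0| < \infty$, and together with primeness this makes $E$ downward directed. Given $z \in Z_0$ with Peirce decomposition $z = \sum_u z_u$ (each $z_u = uzu$ symmetric), the plan has two steps. \emph{Step 1}: show $z_u \in Ku$ for every $u \in E^0$. When $u$ is a sink, $uAu = Ku$ is immediate; when $u$ sits on a closed path, the corollary to the closed-path lemma applies directly. In the remaining case, since $E^0$ is finite, any sufficiently long walk from $u$ must terminate at a sink or enter a cycle, so there is a path $\mu : u \to w$ with $z_w = k_w w$. Centrality gives $z_u\mu = \mu z_w = k_w \mu$, and symmetry of $z_u$ yields the dual identity $\mu^* z_u = k_w \mu^*$. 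Expanding $z_u = \sum_i k_i \mu_i \mu_i^*$ and classifying each $\mu_i$ as a prefix of $\mu$, an extension of $\mu$, or incomparable to $\mu$ (so that $\mu_i^* \mu = 0$), these identities should translate into a system of linear constraints on the $k_i$ forcing $z_u = k_w u$. \emph{Step 2}: once all $z_u = k_u u$, downward directedness supplies a common descendant $w$ of any pair $u, v \in E^0$ with paths $\mu : u \to w$ and $\sigma : v \to w$; then $z\mu = \mu z$ yields $k_u \mu = k_w \mu$, hence $k_u = k_w$, and symmetrically $k_v = k_w$, so $z = k \cdot 1 \in K$.

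The main obstacle is the propagation inside Step 1 of the Leavitt case. The identity $z_u \mu = k_w \mu$ is not by itself enough to conclude $z_u = k_w u$, because products $\mu_i \mu_i^* \mu$ collapse via CK1/CK2 when $\mu_i$ extends $\mu$, generating cross terms of the form $\mu \tau \tau^*$ whose coefficients must be shown to vanish. I expect to handle this by an induction on the maximum length of the $\mu_i$ appearing in the symmetric expansion of $z_u$, using the dual identity $\mu^* z_u = k_w \mu^*$ to cut the expansion down, or alternatively by invoking centrality against a second path out of $u$ supplied by downward directedness, so as to iteratively eliminate all summands not proportional to $u$.
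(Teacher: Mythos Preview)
Your approach is genuinely different from the paper's, and in the Leavitt case it has a real gap at precisely the point you flag as the ``main obstacle.'' First, the ``dual identity'' $\mu^* z_u = k_w \mu^*$ gives you nothing new: it follows from $z_u\mu=k_w\mu$ by applying the involution and using $z_u^*=z_u$. Second, a single path $\mu$ cannot suffice. Already with two edges $f,g\colon u\to w$ to a sink $w$, the symmetric element $k_wff^*+b\,gg^*\in uAu$ satisfies $(k_wff^*+b\,gg^*)f=k_wf$ for every scalar $b$, so the equation $z_u\mu=k_w\mu$ with $\mu=f$ does not determine $z_u$. To push the propagation through you would need to exploit commutation with \emph{every} edge out of $u$ simultaneously, set up an honest induction (which is delicate, since some edges out of $u$ may lead to vertices no ``closer'' to a sink or cycle than $u$ itself), and carry out the CK2 bookkeeping. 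None of this is done, and your two proposed fixes remain sketches.

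The paper sidesteps all of this combinatorics. It invokes Proposition~\ref{finited} to obtain $\dim_K Z_0<\infty$, observes that primeness of $A$ makes $Z_0$ a commutative domain, and concludes: over an algebraically closed field a finite-dimensional domain is the field itself. For arbitrary $K$ it passes to the algebraic closure $\Omega$, using that $L_K(E)\otimes_K\Omega\cong L_\Omega(E)$ (and likewise for Cohn algebras) and that primeness is a graph-theoretic condition and hence survives base change; then $\dim_K Z(A)_0=\dim_\Omega Z(A_\Omega)_0=1$. Your Cohn argument and your Step~2 are fine, but the paper's route is shorter and avoids the unfinished propagation step altogether.
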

\proof{If $K$ is algebraically closed then $Z(A)_0$ is a finite dimensional algebra by Proposition \ref{finited} and as $A$ is prime then  $Z(A)_0$ is a domain so $Z(A)_0=K$. If $K$ is not algebraically closed, let $\Omega$ be the algebraic closure of $K$, then $Z(A_\Omega )=Z(A)\otimes \Omega$ and $Z(A_\Omega)_0=Z(A)_0 \otimes \Omega$. Besides, if $A=L_K(E)$, then  $A_\Omega = A\otimes_K  \Omega=L_K(E)\otimes \Omega \cong L_\Omega(E)$ hence $A_\Omega $ is prime (because the primeness condition is given by a property of the graph). If $A=C_K(E)$ then   $A_\Omega = A\otimes_K  \Omega=C_K(E)\otimes \Omega \cong C_\Omega(E)$ hence $A_\Omega$ is also  prime  as before.
In any case $1=\dim_\Omega(Z(A_\Omega))_0=\dim_K(Z(A)_0)$ so that  $Z(A)_0=K$.}
\subsection{The center of  a prime Cohn path algebra}

Next we will study the center of a prime Cohn path algebra $C_K(E)$. 

 \begin{pr}
 If $C_K(E)$ is prime then $E$ is the $m$-petals rose graph and $Z(C_K(E))=K$.
 \end{pr}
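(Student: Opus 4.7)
The plan is to combine Theorem~\ref{cohnprima} (which already pins down the graph) with Theorem~\ref{padespues} (which almost pins down the center), and then use one concrete calculation with the ``Cohn idempotent'' $p=v-\sum_j f_jf_j^{*}$ to rule out the remaining possibility.

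Primeness of $C_K(E)$ forces $\vert E^{0}\vert=1$ by Theorem~\ref{cohnprima}; write $v$ for the unique vertex. Every edge then has source and range equal to $v$, so $E$ is the rose with $m$ petals for some $m\geq 0$. The case $m=0$ is trivial as $C_K(E)=Kv\cong K$, so I will focus on $m\geq 1$. Write $E^{1}=\{f_1,\dots,f_m\}$ with $s(f_j)=r(f_j)=v$.

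For the center $Z:=Z(C_K(E))$, I would first collect the ingredients needed by Theorem~\ref{padespues}. Primeness of $C_K(E)$ makes $Z$ a commutative domain; the natural involution satisfies $A_n^{*}=A_{-n}$; and Lemma~\ref{capu} gives $Z_0=K$, which is a field. Theorem~\ref{padespues} therefore yields the dichotomy $Z=K$ or $Z\cong K[x,x^{-1}]$ (as graded $K$-algebras), so the entire problem reduces to ruling out the Laurent polynomial case.

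This is where I would use the idempotent $p:=v-\sum_{j=1}^{m}f_jf_j^{*}\in C_K(E)$. First, $p\ne 0$: the set $\{v,f_1f_1^{*},\dots,f_mf_m^{*}\}$ is part of the standard monomial basis of $C_K(E)$ (\cite[Proposition 1.5.6]{AAS}), hence linearly independent. Second, using the Cohn relation $f_k^{*}f_j=\delta_{kj}v$, a direct computation gives $pf_j=f_j-\sum_k f_kf_k^{*}f_j=f_j-f_j=0$ for every $j$. Now suppose, for contradiction, that $Z\cong K[x,x^{-1}]$, and pick a homogeneous unit $z\in Z_n$ with $n>0$. Expanding $z=\sum_{\alpha}k_{\alpha}\sigma_{\alpha}\mu_{\alpha}^{*}$ in the standard basis, each $\sigma_{\alpha}$ has length $l(\sigma_\alpha)=n+l(\mu_\alpha)\geq n\geq 1$, so every $\sigma_{\alpha}$ begins with some edge $f_{j(\alpha)}$; since $pf_{j(\alpha)}=0$ we obtain $p\sigma_{\alpha}\mu_{\alpha}^{*}=0$ by associativity, and therefore $pz=0$. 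Centrality of $z$ then gives $zp=pz=0$, and multiplying by $z^{-1}$ on the left forces $p=0$, a contradiction.

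The only place that requires care is the final contradiction step: one must produce a nonzero element of $C_K(E)$ that is annihilated on one side by every homogeneous element of positive degree, and the idempotent $p$ — which is precisely what distinguishes the Cohn from the Leavitt path algebra — is tailor-made for this role.
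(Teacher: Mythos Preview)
Your proof is correct and shares the same overall architecture as the paper's: both invoke Theorem~\ref{cohnprima} to identify $E$ as the $m$-petal rose, then combine Lemma~\ref{capu} (giving $Z_0=K$) with Theorem~\ref{padespues} to reduce to the dichotomy $Z=K$ or $Z\cong K[x,x^{-1}]$, and finally rule out the second option. The divergence is only in that last step.

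The paper eliminates $K[x,x^{-1}]$ by a \emph{real-degree} argument: writing a hypothetical $z\in Z_n$ ($n>0$) in the standard basis with top real-degree term $l_r\mu_r\gamma_r^{*}$ (so $\gr(\mu_r)=n+r$, $\gr(\gamma_r)=r$), one compares $\mu_0^{*}z$ with $z\mu_0^{*}$, where $\mu_0$ is a path of minimal real degree occurring in $z$. The first has $\gr\le r$ while the second has $\gr=n+r$, contradicting centrality. Your argument instead exploits the Cohn idempotent $p=v-\sum_j f_jf_j^{*}$: since $p$ kills every edge on the left, it kills any $z\in Z_n$ with $n>0$, and invertibility of $z$ then forces $p=0$, contradicting the linear independence of $\{v,f_1f_1^{*},\dots,f_mf_m^{*}\}$ in the Cohn basis. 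Your route is shorter and more conceptual---it isolates exactly the feature (namely $p\ne 0$) that separates $C_K(E)$ from $L_K(E)$, which explains at a glance why the Leavitt case \emph{can} produce $K[x,x^{-1}]$. The paper's real-degree computation, on the other hand, avoids invoking invertibility and works directly with the basis combinatorics; it is more self-contained but a bit heavier in bookkeeping.
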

   \proof{The primeness of $C_K(E)$  and the finiteness of $E$ imply that $Z(C_K(E))$ is  a domain. By Lemma \ref{capu} we know that $Z(C_K(E))_0=K$ is a field. So we can apply Theorem \ref{padespues} to $C_K(E)$ with its standard $\Z$-grading and involution.  We conclude that 
$Z\cong K$ or $Z \cong K[x,x^{-1}]$. Now we must discard the second possibility. By Theorem \ref{cohnprima} the graph $E$ must be the $m$-petals rose  $R_m$. 
 $$\xymatrixcolsep{3.3pc} \xymatrix@-1.0pc{\bullet^{\emph{v}}\ar@(l,u)@{.}[]\ar@(ul,ur)[]^{f_{m}} \ar@(u,r)[]^{f_{1}}\ar@(ur,dr)[]^{f_{2}}\ar@(r,d)[]^{f_{3}}\ar@(dr,dl)@{.}\ar@(l,d)@{.}\ar@(dl,ul)@{.}} $$ 
If $m=0$ then $C_K(R_0)=Kv$ and there is nothing to prove. So we assume $m\ge 1$.
Let $z \in Z_n$ with $n>0$, then $z=l_0 \mu_0+ l_1 \mu_1(\gamma_1)^*+ l_2 \mu_2(\gamma_2)^*+\cdots+l_r \mu_r(\gamma_r)^*$, with  $l_i \in K$, $l_r \in K^{\times}$, $\mu_i, \gamma_i  \in \Path(E)$, $\partial_{\mathbb{R}}(\mu_0)=n$, $\partial_{\mathbb{R}}(\mu_1)=n+1, \cdots, \partial_{\mathbb{R}}(\mu_j)=n+j$, and $\partial_{\mathbb{R}}(\gamma_1)=1, \cdots, \partial_{\mathbb{R}}(\gamma_j)=j$.
We have $$ \mu^*_0z=l_0 + l_1 \mu_0^*\mu_1(\gamma_1)^*+ l_2 \mu_0^*\mu_2(\gamma_2)^*+\cdots+l_r \mu_0^*\mu_r(\gamma_r)^*,$$
where $\partial_{\mathbb{R}}(\mu^*_0z)\leq r.$ On the other hand
$$z\mu^*_0=l_0 \mu_0\mu^*_0+ l_1 \mu_1(\gamma_1)^*\mu^*_0+ l_2 \mu_2(\gamma_2)^*\mu^*_0+\cdots+l_r \mu_r(\gamma_r)^*\mu^*_0$$
  and now  $\max(\partial_{\mathbb{R}}(z\mu^*_0))=n+ r$, whence $n+r \leq r$, which implies $n \leq 0$, a contradiction, therefore $z=0$.  For $ z \in Z_{-n}$, with $n>0$, we may apply the involution to get again $z=0$. Thus we conclude $Z=Z_0=K$.}

\subsection{The center of a Prime Leavitt path algebra}

We begin this subsection by introducing some definitions. 
For a path $\mu=e_1\dots e_n$ we denote by $\mu^0$ the set of vertices given by $\mu^0:=\{s(e_i) \ \colon \ i= 1, \dots n\}\cup \{r(e_n)\}$.
\medskip

For $X$ a nonempty subset of an algebra $A$, we denote by $I(X)$ the ideal generated in $A$ by $X$.
\medskip

For a graph $E$ we will denote by $P_c(E)$ the set of vertices given by $P_c(E):=\cup \mu^0$, where $\mu$ ranges in the
set of all cycles without exits of the graph. Recall also the so called Condition (L): we say that a graph $E$ satisfies \emph{Condition} (L) if each cycle in $E$ has an exit.

\medskip

The proof of the following result is contained in \cite[Proposition 3.5]{AAPS}.

\begin{pr}\label{IdCicSins} 
Let $c$ and $d$ be cycles without exits in a graph $E$. Then:
\begin{enumerate}
\item[\rm (i)] $I(c^0) \cong M_{n}(K[x, x^{-1}])$ (matrices with only a finite number of nonzero entries), where $n$ is the cardinal of the set of paths ending at the cycle $c$ and not containing all the edges of this cycle.
\item[\rm (ii)] $I(c^0) I(d^0)=0$.
\end{enumerate}
\end{pr}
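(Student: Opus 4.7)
The plan is to handle (i) by using the no-exit hypothesis to invert the cycle inside $L_K(E)$, thereby producing a Laurent polynomial corner, and then to lift this to a matrix-algebra isomorphism via a family of matrix units indexed by paths ending at the cycle. Part (ii) will reduce to an elementary disjointness argument.

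First I would exploit the no-exit hypothesis. Writing $c = e_1 \cdots e_n$ with $v_i := s(e_i)$ and $v_{n+1} := v_1$, each $v_i$ is a regular vertex emitting only $e_i$, so (CK2) at $v_i$ reads $v_i = e_i e_i^*$; together with the (CK1) relation $e_i^* e_i = v_{i+1}$, each $e_i$ is ``two-sided unitary'' and consequently $c c^* = c^* c = v_1$. Hence $c$ is invertible in the corner $v_1 L_K(E) v_1$. Because $c$ has no exit, every element of $\Path(E)$ with source $v_1$ is a prefix of some power of $c$, so every monomial $\alpha \beta^*$ in the corner collapses, using $cc^* = c^*c = v_1$, to a single power $c^m$ with $m \in \Z$. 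Checking linear independence, the corner is isomorphic to $K[x, x^{-1}]$ via $c \mapsto x$, and since $v_1 \in c^0$ this corner lies inside $I(c^0)$.

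Next I would construct matrix units. Let $T$ denote the set of paths ending in $c^0$ and not containing every edge of $c$; for $\mu \in T$ with $r(\mu) = v_{j(\mu)}$ let $q_\mu$ be the shortest path inside the cycle from $v_{j(\mu)}$ to $v_1$, and set $\widetilde{\mu} := \mu\, q_\mu$, so $r(\widetilde{\mu}) = v_1$. The candidates $E_{\mu,\nu} := \widetilde{\mu}\, \widetilde{\nu}^{\,*}$ should serve as matrix units: using $v_i = e_i e_i^*$ and (CK1), one verifies $\widetilde{\mu}^{\,*} \widetilde{\nu} = \delta_{\mu,\nu}\, v_1$, the potentially awkward case of one being a prefix of the other being excluded by the restriction that neither $\mu$ nor $\nu$ uses all edges of $c$. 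Multiplying these matrix units by elements of the Laurent corner yields the desired isomorphism $I(c^0) \cong M_T(K[x, x^{-1}])$. The main obstacle is the combinatorial bookkeeping: one must show that every $\alpha \beta^*$ with $r(\alpha) = r(\beta) \in c^0$ rewrites uniquely as $\widetilde{\mu}\, c^m\, \widetilde{\nu}^{\,*}$ for some $(\mu, \nu, m)$, which amounts to a unique factorization of paths with range in $c^0$ into an ``external tail'' in $T$ followed by some cycle piece.

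For (ii), two distinct cycles without exits must have disjoint vertex sets: if $v \in c^0 \cap d^0$, then the unique outgoing edge of $v$ would lie in both cycles, and iterating this forces $c$ and $d$ to coincide as cyclic structures. Given $c^0 \cap d^0 = \emptyset$ together with the no-exit property, no path can travel from $c^0$ to $d^0$. The ideal $I(c^0)$ is spanned by monomials $\alpha \beta^*$ with $r(\alpha) = r(\beta) \in c^0$, and analogously for $I(d^0)$; in the product $\alpha \beta^* \cdot \gamma \delta^*$ the middle factor $\beta^* \gamma$ is nonzero only when $\beta$ and $\gamma$ are comparable in $\Path(E)$, which would force a path between $c^0$ and $d^0$. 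Since such a path does not exist, $\beta^* \gamma = 0$, and hence $I(c^0) I(d^0) = 0$.
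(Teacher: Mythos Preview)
The paper does not supply its own proof of this proposition; it simply cites \cite[Proposition~3.5]{AAPS}. Your overall strategy for (i) --- show that the corner $v_1L_K(E)v_1$ is $K[x,x^{-1}]$ via $c\mapsto x$, then manufacture matrix units from paths ending at the cycle --- is precisely the standard argument, and your proof of (ii) is correct as written.

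There is, however, a genuine gap in your matrix-unit construction for (i). The orthogonality $\widetilde{\mu}^{\,*}\widetilde{\nu}=\delta_{\mu,\nu}v_1$ that you assert does not hold for your index set $T$. Take the $2$-cycle $c=e_1e_2$ with $v_i=s(e_i)$, and let $\mu=v_1$, $\nu=e_1$. Both lie in $T$ (neither uses \emph{all} edges of $c$), yet $\widetilde{\mu}=v_1$ and $\widetilde{\nu}=e_1e_2=c$, so $\widetilde{\mu}^{\,*}\widetilde{\nu}=c\ne 0$; your justification that ``neither $\mu$ nor $\nu$ uses all edges of $c$'' does not rule this out, because the appended tail $q_\nu$ can supply the missing edges. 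Worse, the assignment $\mu\mapsto\widetilde{\mu}$ is not even injective on $T$: both $v_2$ and $e_2$ yield $\widetilde{\,\cdot\,}=e_2$. In this example $\lvert T\rvert=4$ while $L_K(C_2)\cong M_2(K[x,x^{-1}])$, so the index set is already the wrong size.

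The repair is to index not by paths ending anywhere in $c^0$, but by paths $\alpha$ with $r(\alpha)=v_1$ such that $c$ is \emph{not a suffix} of $\alpha$ (equivalently, $\alpha=v_1$ or the last edge of $\alpha$ differs from $e_n$). With that choice one genuinely has $\alpha^*\beta=\delta_{\alpha,\beta}v_1$: a proper prefix relation $\alpha<\beta$ would force $\beta=\alpha c^k$ with $k\ge 1$, so the last edge of $\beta$ would be $e_n$, contradicting $\beta$ in the index set. The ``combinatorial bookkeeping'' you flag then becomes the easy observation that every path with range $v_1$ factors uniquely as $\alpha c^k$ with $\alpha$ in this set and $k\ge 0$.
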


\begin{theo}[\textbf{Reduction Theorem}]\label{3.1} {\rm (\cite[Proposition 3.1]{AMMS1})}.
Let $E$ be an arbitrary graph. Then for every nonzero element $z\in L_K(E)$ there exist $\mu,\nu \in$ Path$(E)$ such that:

\begin{enumerate}
\item[{\rm (i)}] $\mu^*z\nu = kv$ for some $k\in K\setminus\{0\}$ and $v\in E^0$, or
\item[{\rm (ii)}] there exists a vertex $w\in P_c(E)$ such that $\mu^*z\nu$  is a nonzero polynomial $p(c, c^*)$, where $p(x, x^{-1}) \in K[x,x^{-1}]$.
\end{enumerate}

\noindent Both cases are not mutually exclusive.
\end{theo}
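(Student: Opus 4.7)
The plan is to use the normal-form basis of $L_K(E)$ described in \cite[Corollary 1.5.11]{AAS} to express $z$ as a finite linear combination $z = \sum_{i=1}^r k_i \lambda_i \tau_i^*$ with $k_i \in K^\times$ and distinct basis monomials $\lambda_i \tau_i^*$, and then to construct $\mu$ and $\nu$ by an iterative squeezing procedure. The core idea is to pick a distinguished monomial in $z$ and multiply on the left by $\mu^*$ and on the right by $\nu$ so that this monomial becomes proportional to a vertex, and then to analyse the surviving cross-terms using the Cuntz--Krieger relations.

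First I would localize the problem: by fixing vertices $u = s(\lambda_j)$ and $v = s(\tau_j)$ for a suitable index $j$, one works with $u z v \in u L_K(E) v$, which is still nonzero since at least one basic monomial survives. Next, I would pick $j$ so that $\lambda_j$ has maximal real degree among summands with $s(\lambda_i) = u$, and such that no other $\lambda_i$ has $\lambda_j$ as a proper prefix. Multiplying on the left by $\lambda_j^*$ gives
\[
\lambda_j^* z = k_j \tau_j^* + \sum_{i \neq j} k_i \, \lambda_j^* \lambda_i \, \tau_i^*,
\]
where each surviving cross-term requires $\lambda_j^* \lambda_i \neq 0$, which by the choice of $j$ forces $\lambda_i = \lambda_j$. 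A symmetric reduction on the right using $\tau_j$ collapses the $\tau$-side similarly, leaving an expression supported at a single vertex $w = r(\lambda_j) = r(\tau_j)$.

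The obstacle to concluding case (i) immediately is the presence of closed paths at $w$ whose powers contribute residual terms of the form $c^n (c^m)^*$ in the reduction. If some such cycle $c$ admits an exit $f$ at a vertex of $c^0$, then applying relation (CK2) at that vertex and multiplying suitably by $f$ (or $f^*$) strictly decreases a natural complexity measure (for instance, the total real degree of the monomials appearing), so an induction on this measure eventually yields a scalar multiple of a vertex, giving case (i). If every closed path based at $w$ is a cycle without exit, then $w \in P_c(E)$ and the residual expression is a nonzero element of the subalgebra generated by $c$ and $c^*$, which is isomorphic to $K[x, x^{-1}]$ by Proposition \ref{IdCicSins}; this is precisely case (ii).

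The main technical obstacle is organising the induction so that the two directional reductions (on the $\lambda$ side and on the $\tau$ side) do not interfere. A subtle point is that after the left reduction by $\lambda_j^*$, the resulting element may no longer be in normal form, and one must re-express it before performing the right reduction; the choice of $\lambda_j$ with maximal real degree is what ensures that this re-expression does not increase the number of monomials uncontrollably. With this bookkeeping in place, the only way the process can fail to produce a vertex is the appearance of a cycle without exit, so the dichotomy is exhaustive, and the two cases are not mutually exclusive precisely because some $p(c,c^*)$ can be further reduced to a scalar vertex by additional squeezing.
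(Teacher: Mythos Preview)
The paper does not give its own proof of this statement: Theorem~\ref{3.1} is simply quoted from \cite[Proposition 3.1]{AMMS1} and then used as a tool in the proof of Theorem~\ref{atomo}. So there is no in-paper argument to compare your proposal against.

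For what it is worth, your outline does follow the strategy of the original proof in \cite{AMMS1}: squeeze on the left by a suitable $\lambda_j^*$ to eliminate real edges, then on the right to eliminate ghost edges, and finally analyse the residual closed paths at the resulting vertex, distinguishing the cases according to whether an exit is available. One step, however, is misstated. Having chosen $\lambda_j$ of maximal real degree, the condition $\lambda_j^*\lambda_i\neq 0$ does \emph{not} force $\lambda_i=\lambda_j$; it forces $\lambda_i$ to be a prefix of $\lambda_j$, and your extra clause ``no other $\lambda_i$ has $\lambda_j$ as a proper prefix'' is already implied by maximality and points in the wrong direction to exclude this. Consequently $\lambda_j^* z$ is a linear combination of ghost paths of the form $(\beta_i\gamma_i)^*$ with $\lambda_j=\alpha_i\gamma_i$, not just $k_j\tau_j^*$ plus terms sharing the same $\lambda_j$. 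This is in fact what the next step needs, so the defect is in the justification rather than the destination; but the displayed identity is wrong as written. You should also make explicit why $\lambda_j^* z\neq 0$ and why the right-hand reduction lands in $wL_K(E)w$ with $w=r(\lambda_j)$: both points use that you have first localised to $uzv$ and are working with a genuine normal-form expression, since otherwise cancellations among distinct $\alpha_i\beta_i^*$ can kill the leading term.
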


\begin{theo}\label{atomo}
Let $E$ be a  graph such that $L_K(E)$ is a prime Leavitt path algebra, then $Z(L_K(E))\ne 0$ if and only if $\vert E^0\vert < \infty$. In this case:
\begin{enumerate}
\item[{\rm (i)}] $Z(L_K(E))\cong K$ if and only if $E$ satisfies Condition {\rm (L)} or there exists a unique cycle without exits $c$ and infinitely many paths ending at $c$ and not containing $c$.
\item[{\rm (ii)}] $Z(L_K(E))\cong K[x, x^{-1}]$ if and only if $E$ contains a unique cycle without exits and there are only a finite number of paths ending at $c$ and not containing $c$.
\end{enumerate}
\end{theo}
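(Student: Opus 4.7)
The plan is to first handle the existence-of-nonzero-center equivalence, then to apply the dichotomy from Theorem~\ref{padespues}, and finally to match the two alternatives against the graph-theoretic conditions.

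For the first assertion, the forward direction $Z(L_K(E))\neq 0 \Rightarrow |E^0|<\infty$ is exactly Proposition~\ref{finito}. Conversely, if $|E^0|<\infty$ then $A:=L_K(E)$ is unital with identity $1=\sum_{v\in E^0} v$, which is a nonzero central element. Now assume $|E^0|<\infty$ and $A$ is prime. Then $Z=Z(A)$ is an integral domain and $Z_0=K$ by Lemma~\ref{capu}. The standard $\Z$-grading on $A$ together with the involution satisfies $A_n^*=A_{-n}$, so Theorem~\ref{padespues} applies and gives exactly two possibilities: $Z=K$, or $Z\cong K[x,x^{-1}]$ as graded algebras. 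Moreover, by Proposition~\ref{IdCicSins}(ii), primeness forbids two distinct cycles without exits (their associated ideals would annihilate each other), so there is at most one such cycle.

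It remains to match the two possibilities for $Z$ with the graph conditions in (i)--(ii). For the implication ``case (ii) of the theorem $\Rightarrow Z\cong K[x,x^{-1}]$'', let $c$ be the unique cycle without exits at a vertex $v$, of length $n$, and let $\tau_1,\ldots,\tau_m$ be the finitely many paths with $r(\tau_i)\in c^0$ and not containing all edges of $c$. For each $i$ let $c_i$ be the rotation of $c$ based at $r(\tau_i)$, and set
\[ z := \sum_{i=1}^m \tau_i\, c_i\, \tau_i^*. \]
The exitlessness of $c$ makes the (CK2) relation at each vertex of $c$ a single summand; hence $e_j e_j^*=s(e_j)$, and consequently $c_i c_i^*=r(\tau_i)=c_i^* c_i$. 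A direct verification on vertices, edges, and ghost edges (using only (CK1), (CK2), and the parametrization of the $\tau_i$'s) will show that $z$ is central of degree $n>0$, ruling out $Z=K$.

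For the reverse implication, suppose $Z\cong K[x,x^{-1}]$, so there is a nonzero $z\in Z_n$ with $n>0$. Because $zz^*\in Z_0=K$ and $Z$ is a domain, $z$ is a unit of $A$. Apply the Reduction Theorem~\ref{3.1} to $z$: either (a) $\mu^* z\nu = kv$ for $k\in K^{\times}$, $v\in E^0$, or (b) $\mu^* z\nu = p(c,c^*)$ for some cycle $c$ without exits. I would argue that alternative (a) combined with invertibility of $z$ and a Peirce-component analysis (patterned on the arguments in Subsection~3.2 that $z_u\in Ku$ whenever $u$ lies on a closed path) leads to a contradiction when $n>0$, forcing alternative (b); hence $E$ contains a unique cycle without exits. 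The finiteness of paths ending at $c$ and not containing $c$ then follows because in the infinite case $I(c^0)\cong M_{\infty}(K[x,x^{-1}])$ by Proposition~\ref{IdCicSins}(i), and the only natural central lift of the matrix identity times $x$ would be the infinite sum $\sum_i\tau_i c_i\tau_i^*$, which is not an element of $A$. The principal obstacle will be this converse direction---in particular, ruling out alternative (a) when $n>0$---while the existence direction is essentially bookkeeping inside $I(c^0)\cong M_m(K[x,x^{-1}])$, where the constructed $z$ corresponds to $x\cdot\mathrm{Id}_m$.
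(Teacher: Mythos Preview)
Your overall framework---Proposition~\ref{finito} for the first equivalence, then Theorem~\ref{padespues} and Lemma~\ref{capu} for the dichotomy $Z\in\{K,\,K[x,x^{-1}]\}$, and finally matching against the graph conditions---is exactly the paper's. Your explicit construction $z=\sum_i \tau_i c_i \tau_i^*$ for the direction ``condition of (ii) $\Rightarrow Z\cong K[x,x^{-1}]$'' is a legitimate alternative to the paper's route; the paper instead proves the contrapositive of (i), invoking $Z(I)=Z(A)\cap I$ for the ideal $I=I(c^0)\cong M_m(K[x,x^{-1}])$ to embed a copy of $K[x,x^{-1}]$ inside $Z(A)$ and thereby contradict $Z\cong K$.

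Your treatment of ``$Z\cong K[x,x^{-1}]\Rightarrow$ condition of (ii)'' has two genuine gaps. First, the plan to show that alternative (a) of the Reduction Theorem leads to a contradiction is misguided: case~(a) can occur for a homogeneous central unit $a$ of positive degree and contradicts nothing. The paper instead \emph{exploits} case~(a): from $\mu^*a\nu=kv$ and centrality one gets $a\mu^*\nu=kv$; multiplying by $a^*$ (using $a^*a=1$) yields $\mu^*\nu=kva^*$, so $\nu^*\mu$ is a scalar multiple of $va$, hence a single closed path at $v$; invertibility of $a$ then forces this closed path to have no exits. Thus both (a) and (b) produce a cycle without exits---there is no way (and no need) to rule (a) out. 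The Peirce result you cite ($z_u\in Ku$ on closed paths) was established only for $z\in Z_0$ and does not transfer. Second, your finiteness argument (``the infinite sum is not in $A$'') only shows that one particular candidate central element fails, not that none exists. The paper argues as follows: if infinitely many paths end at $c$, finiteness of $E^0$ forces some cycle $d$ with an exit to connect to $c$; with $u=s(d)$, the operator $S(x)=d^*xd$ on $uAu$ fixes $au$ but eventually annihilates every monomial that leaves the powers of $d$, forcing $au$ to be a scalar multiple of $d^{\deg(a)}$. Since $d$ has an exit, $d^{\deg(a)}$ does not commute with $d^*$, contradicting centrality of $a$ and $a^*a=1$.
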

\begin{proof}{
We observe first that if there are cycles without exits, then there is only one, since the Leavitt path algebra is prime and by condition (ii) in Proposition  \ref{IdCicSins}.
\newline
By Proposition \ref{finito}, $Z(L_K(E))\ne 0$ if and only if $\vert E^0\vert < \infty$. Now suppose $\vert E^0\vert < \infty$.
Applying Theorem \ref{padespues} and Lemma \ref{capu}, we know that  the center of $L_K(E)$ is isomorphic to $K$ or to $K[x, x^{-1}]$. 
Suppose first that $Z(L_K(E))\cong K$. We will see that $E$ satisfies Condition (L) or there exist a unique cycle without exits and infinite paths
ending at $c$ and not containing $c$.
\newline
Supose that $c$ is a cycle without exits, and let $I$ be the (graded) ideal generated by $c^0$. By condition (i) in Proposition \ref{IdCicSins} the ideal  $I$ is isomorphic to $M_{n}(K[x, x^{-1}])$.
 Consider first $n\in \mathbb{N}\setminus\{0\}$. By \cite[Remark 3.4]{BPSS} we have $Z(I)=Z(L_K(E))\cap I$. Since
 $Z(I)$ isomorphic to $K[x, x^{-1}]$, which is infinite dimensional as a $K$-vector space and $Z(L_K(E))\cong K$, we have $Z(L_K(E)) \cap I$ is 1-dimensional over $K$, a contradiction. 
\newline
Now, suppose $Z(L_K(E))\cong K[x, x^{-1}]$, and let $\varphi$ be an isomorphism from $Z(L_K(E))$ into $K[x, x^{-1}]$. Take $a\in Z(L_K(E))$ such that $\varphi(a)=x$;
by the proof of Theorem \ref{padespues}, we may suppose that $a$ is a homogeneous element of degree $d>0$. Then:
\newline
$$(\dag)\quad  a a^* =a^* a = 1.$$
\newline
We claim that there exists a vertex $v$ in $E^0$ and $k\in K^\times$ such that $kva$ or $kva^*$ is a power of a cycle without exits. Indeed, by the Reduction Theorem there exist paths $\mu, \nu$ such that $\mu^*a\nu = kv$ for some $k\in K\setminus\{0\}$ and $v\in E^0$, or
$\mu^*a\nu$ is a nonzero polynomial $p(c, c^*)$, for $p(x, x^{-1})\in  K[x,x^{-1}]$.
\newline
In the first case, $\mu^*a\nu = kv$; apply that $a$ is in the center of $L_K(E)$ to get: $a\mu^*\nu =kv$. Multiply by $a^*$ on the left hand side of each term of this identity and apply $(\dag)$ to get: $\mu^*\nu =kva^*$ (note that $a^* \in Z(L_K(E))$). Since the degree of $a$ is $d >0$, then $(\mu^*\nu)^*= \nu^*\mu=kva$ is a path. 
\newline
Because $a\in Z(L_K(E))$, $kva=\nu^*\mu$ is a closed path starting and ending at $v$, say $\nu^*\mu=e_1\dots e_n$. We claim this closed path has no exits. Suppose on the contrary that there exists $i\in \{1, \dots, n\}$ and $f\in E^1$ such that $s(f)=s(e_i)$ and $f\neq e_i$. Denote by $w$ the source of $e_{i}$. Then:
$$e_i \dots e_n e_1\dots e_{i-1} =
(e_{i-1}^* \dots e_1^*) e_1\dots e_{i-1}e_i \dots e_n (e_1\dots e_{i-1})= $$ $$(e_{i-1}^* \dots e_1^*)kva (e_1\dots e_{i-1}) 
= kwa.$$
This implies
$0 = f^*e_i \dots e_n e_1\dots e_{i-1} = f^*(kwa)= kf^*a$. Multiply by $a^*$ and use again $(\dag)$. We get $0=kf^*$, a contradiction, and we have proved that $kva$ is a cycle without exits. 
\newline
If we are in the second case, that is, $\mu^*a\nu$ is a nonzero polynomial $p(c, c^*)$, we arrive again to the existence of a cycle without exits in $E$.
\newline
Finally we show that the number of paths ending at $c$ and not containing $c$ is finite.
Suppose on the contrary that there are infinitely many paths in this situation. Since the number of vertices is finite, this means that there exists a cycle with exits $d$ connecting to $c$. Let $u=s(d)$. A generator system for $uL_K(E)u$ is $A\cup B$, for 
$$A= \left\{
d^n(d^m)^*\ \vert \ n, m \geq 0\right\}$$
and
$$
B= \left\{
d^n\alpha \beta^*(d^m)^*\ \vert \ n, m \geq 0,\ s(\alpha)=u=s(\beta), d \not\leq \alpha,\ d \not\leq \beta,\ \alpha^1\cup\beta^1\not\subset d^1 \right\}.$$
For $n=0$ we understand $d^n=u$. Note that given $n,m \geq 0$ and $d^n\alpha \beta^*(d^m)^*\in B$, there exists a  suitable $r\in \mathbb{N}$ such that $(d^r)^*d^n\alpha \beta^*(d^m)^*d^r=0$. This gives us that if we define the map $S:L_K(E) \to L_K(E)$  by
$S(x) = d^\ast x d$,  for every $b\in B$ there is an $n\in \mathbb{N}$ satisfying $S^n(b)=0$. Note that $au$ is a fixed point for $S$. A consequence of this reasoning is that $au\in span(A)$. Write $au=\sum_nk_{n}d^n(d^m)^*$, for $m=n-deg(a)$. Then, for some $l\in \mathbb{N}$ we have $au=S^l(au)=\sum_nk_nd^{deg(a)}$. Since $au$ commutes with every element in $uL_K(E)u$, the same should happen to $d^{deg(a)}$, but this is not true as it does not commute with $d^*$, giving $au=0$ and therefore $u=a^*au=0$ (by $(\dag)$), a contradiction.
}
\end{proof}

\section{Consequences for the center of a general Leavitt path algebra}

Once we know how to compute the center of a prime Leavitt path algebra, we would like to get an idea, as close as possible, on the structure of the center of a general Leavitt path algebra associated to a row-finite graph $E$. In order to achieve this target, we will give a structure theorem for Leavitt path algebras in terms of subdirect products of prime Leavitt path algebras (note that every Leavitt path algebra is a semiprime associative algebra; see, for example, \cite[Proposition 1.1]{AMMS1}). The key notion will be that of graded Baer radical whose behaviour is quite similar to that of the Baer radical (in a nongraded sense).

\medskip
In this section we shall work with $\Z$-graded algebras and since no other grading group will be considered, the term \lq\lq graded algebra\rq\rq\ will mean $\Z$-graded algebra.
\medskip

Let $A$ be a graded algebra, $A=\oplus_{n\in \Z}A_n$. Recall that $A$ is said to be \emph{graded semiprime} when for any graded ideal $I$ of $A$ we have $$I^2=0\text{ implies } I=0.$$ Also $A$ is said to be \emph{graded prime} when for any two graded ideals $I$ and $J$ of $A$ we have 
$$IJ=0\text{ implies } I=0 \text{ or } J=0.$$
It is easy to prove that $A$ is graded semiprime if and only if for any
homogeneous element $x\in A$ we have that $xAx=0$ implies $x=0$ (which is the definition of being graded non degenerate).
It is also straightforward to see that $A$ is graded prime if and only if
for any two homogeneous elements $x,y\in A$ we have that $xAy=0$ implies
$x=0$ or $y=0$. As a corollary we have:
$$A \text{ is prime } \text{ if and only if } A \text{ is graded prime,} $$
$$A \text{ is semiprime } \text{ if and only if } A \text{ is graded semiprime.} $$
\medskip

The proofs of these facts can be seen in \cite[Proposition II.1.4 (1)]{NvO}.
\medskip

Recall that a \emph{graded prime ideal} $I$ of $A$ is graded ideal
such that $A/I$ is a  graded prime algebra. In the same way  we can define
the notion of graded semiprime ideal.
\begin{de}\rm
Let $A$ be a graded algebra; we define the \emph{graded Baer radical}
of $A$ as the intersection of all graded prime  ideals of $A$. We shall denote
it by $\rad(A)$.
\end{de}
The following theorem is the graded version of a well-known result on the classical Baer radical. Its proof runs parallel to that of the classical theorem; we include it here for completeness.

\begin{theo}
The graded Baer  radical of a graded algebra $A$ is a graded semiprime ideal. In fact, it is the least graded ideal which is semiprime.
\end{theo}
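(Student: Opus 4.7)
The plan is to split the statement into two parts: first that $\rad(A)$ is itself a graded semiprime ideal, and second that any graded semiprime ideal of $A$ contains it. Note that $\rad(A)$ is automatically a graded ideal, being an intersection of graded ideals, so we only need the semiprimeness and the minimality.

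For semiprimeness, the argument is essentially one line. Suppose $I$ is a graded ideal with $I^{2}\subseteq \rad(A)$. Then for every graded prime $P$ of $A$ we have $I^{2}\subseteq P$, and since $P$ is graded prime this forces $I\subseteq P$; intersecting over all such $P$ yields $I\subseteq \rad(A)$, which is exactly what graded semiprimeness of $\rad(A)$ asks for (by the characterization recalled in the paper right before the definition of $\rad$).

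For minimality, let $J$ be any graded semiprime ideal; I want to show $\rad(A)\subseteq J$, and the cleanest way is to prove that $J$ is an intersection of graded prime ideals of $A$. This in turn reduces, via the fact that $J$ is graded and any element has a homogeneous decomposition, to the following claim: for every homogeneous $x\in A\setminus J$, there is a graded prime $P\supseteq J$ with $x\notin P$. I would prove this by the classical m-system/Zorn argument, adapted to the graded setting. Starting from $x_{0}:=x$, I build recursively a sequence of homogeneous elements outside $J$: if $x_{n}\notin J$ is homogeneous, then because $A/J$ is graded semiprime (equivalently graded nondegenerate) one has $x_{n}Ax_{n}\not\subseteq J$, and since $J$ is graded I may pick a homogeneous $r_{n}$ with $x_{n+1}:=x_{n}r_{n}x_{n}\notin J$ still homogeneous. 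The set $M=\{x_{n}:n\geq 0\}$ is a graded m-system in the sense that for any $x_{n},x_{m}\in M$ one can find a homogeneous $r$ with $x_{n}rx_{m}\in M$ (a short induction shows $x_{m}\in x_{n}Ax_{n}$, so one can engineer $r$ to produce $x_{m+1}$). Applying Zorn to the family of graded ideals containing $J$ and disjoint from $M$, partially ordered by inclusion, yields a maximal element $P$, and one checks in the standard way that $P$ must be graded prime: if graded ideals $I,K$ satisfied $IK\subseteq P$ but neither lies in $P$, maximality would give homogeneous $a\in(P+I)\cap M$ and $b\in(P+K)\cap M$, and then some $arb\in M$ would lie in $P+IK\subseteq P$, a contradiction. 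Since $x=x_{0}\in M$, this $P$ witnesses $x\notin P\supseteq J$.

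The main obstacle I expect is purely bookkeeping: keeping everything homogeneous throughout the m-system construction and the Zorn step. Specifically, one needs to verify that sums $P+I$, $P+K$ of graded ideals remain graded, that the m-system can be built with homogeneous $r_{n}$ (which works because $J$ is graded, so from $x_{n}ax_{n}\notin J$ with $a=\sum a_{i}$ some homogeneous $x_{n}a_{i}x_{n}\notin J$), and that graded primeness of $P$ can be tested on graded ideals alone (which is exactly the definition). None of these steps is deep, but they are where the \emph{graded} adjective does real work compared with the classical Baer-radical proof.
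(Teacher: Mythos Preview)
Your proof is correct and follows essentially the same route as the paper: both arguments establish semiprimeness of $\rad(A)$ directly from the definition and prove minimality by building, from any element outside a graded semiprime ideal $J$, an $m$-system $\{x_n\}$ with $x_{n+1}\in x_nAx_n\setminus J$, then applying Zorn to obtain a graded prime ideal containing $J$ and missing the system. The only cosmetic difference is that you track homogeneity of the $x_n$ explicitly, whereas the paper works with arbitrary elements, tacitly relying on the equivalence (recalled just before the definition) between graded semiprimeness and ordinary semiprimeness.
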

\proof{ If $x \in A$ is such that $xAx\subset\rad(A)$ then $xAx\subset P$ for any graded prime ideal
$P$. Since $P$ is semiprime we have $x\in P$ hence $x\in\rad(A)$. To prove that $\rad(A)$ is contained in any graded semiprime ideal $I$ we consider the class
$\{P_\a\}_{\a}$ of all graded prime ideals of $A$  which contain $I$. Next we prove that $I=\cap_\a P_\a$. If there is some $x\in\cap_\a P_\a$ such that
$x\not\in I$ we define $x_0:=x$. Then $x_0Ax_0\not\subset I$ (because $I$
is semiprime) and $x_0Ax_0\subset\cap_\a P_\a$. Thus we can take some
$x_1\in x_0 Ax_0$ such that $x_1\notin I$, $x_1\in P_\a$ (for any $\a$). We  repeat this argument to obtain a sequence $X:=\{x_0,x_1,\ldots, x_n,x_{n+1},\ldots\}$ such that $x_n\notin I$, $x_n\in\cap_\a P_\a$ and $x_{n+1}\in x_nAx_n$. Define the family $\mathfrak{F}$ of all graded ideals $J$ of $A$
such that $I\subset J$ and $J\cap X=\emptyset$. Since $I\in\mathfrak{F}$ we have $\mathfrak{F}\ne\emptyset$. Thus by Zorn's Lemma we have a maximal element $P\in\mathfrak{F}$. Let us prove that $P$ is a prime ideal. In order to do that take two graded ideals $J_i$ of $A$ such that $P\subset J_i$ for $i=1,2$ and
$J_1J_2\subset P$. We have to prove that $P=J_i$ for some $i=1,2$. 
Suppose on the contrary $P\subsetneq J_i$; this implies $J_i\cap X\ne\emptyset$ and so
some $x_n\in J_1$ and some $x_m\in J_2$. Thus, if $l\ge\max(m,n)$ we
have $x_l\in J_1\cap J_2$ and $x_{l+1}\in J_1J_2\subset P$. But $P\in\mathfrak{F}$, which is a contradiction. So we have proved that $P$ is a prime ideal, hence it is a graded prime ideal containing $I$. Since each $x_n\in\cap_\a P_\a\subset P$ we have $x_n\in P$, which contradicts the fact that
$P\cap X=\emptyset$. This proves the theorem.}
\medskip
\begin{de}
Let $A$ be a graded algebra then we denote by $\P$ the family of all graded prime ideals of $A$. 
\end{de}

\begin{co}\label{ocho}
Let $L_K(E)$ be a Leavitt path algebra. Then:
\begin{enumerate}
\item[{\rm (i)}] $\rad(L_K(E))=0.$
\item[{\rm (ii)}] $L_K(E)$ is a subdirect product of prime Leavitt path algebras.
\end{enumerate}
\end{co}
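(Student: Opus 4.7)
The plan is to derive (i) as an immediate consequence of the theorem just proved together with the semiprimeness of $L_K(E)$, and then obtain (ii) as a routine Birkhoff-style subdirect product representation.

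For (i), I would proceed as follows. The paper has already recalled that $L_K(E)$ is semiprime (cited from \cite[Proposition 1.1]{AMMS1}), and has observed that for any $\Z$-graded algebra the notions of semiprime and graded semiprime coincide (with proof reference \cite[Proposition II.1.4 (1)]{NvO}). Hence $L_K(E)$ is graded semiprime, which means precisely that the zero ideal of $L_K(E)$ is a graded semiprime ideal. The theorem above characterises $\rad(L_K(E))$ as the \emph{least} graded semiprime ideal of $L_K(E)$, so $\rad(L_K(E))\subseteq 0$, giving $\rad(L_K(E))=0$.

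For (ii), I would use the standard subdirect product construction. Let $\P$ denote the family of all graded prime ideals of $L_K(E)$ and consider the canonical homomorphism
\[
\varphi\colon L_K(E)\longrightarrow \prod_{P\in\P} L_K(E)/P,\qquad \varphi(a)=\bigl(a+P\bigr)_{P\in\P}.
\]
Its kernel is $\bigcap_{P\in\P} P=\rad(L_K(E))$, which equals $0$ by part (i); hence $\varphi$ is injective. Each projection $\pi_P\circ\varphi\colon L_K(E)\to L_K(E)/P$ is obviously surjective, so $\varphi$ exhibits $L_K(E)$ as a subdirect product of the family $\{L_K(E)/P\}_{P\in\P}$. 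Finally, each factor $L_K(E)/P$ is prime, because $P$ is graded prime and graded prime $=$ prime as noted in the paper. To conclude that each factor is actually a prime Leavitt path algebra, I would invoke the structural fact that the quotient of a Leavitt path algebra by a graded ideal is itself (isomorphic to) a Leavitt path algebra, associated to a suitable quotient graph built from the hereditary saturated set and breaking vertices attached to the ideal.

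The only real subtlety is this last point: one must quote (or verify) that a quotient of $L_K(E)$ by a graded ideal remains a Leavitt path algebra, so that the factors of the subdirect decomposition genuinely lie in the class considered in Section 3. Everything else is formal, combining the universal property of the graded Baer radical from the preceding theorem with the already-established semiprimeness of $L_K(E)$.
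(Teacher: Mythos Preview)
Your proposal is correct and follows essentially the same approach as the paper's proof: for (i) you use semiprimeness of $L_K(E)$ together with the characterization of $\rad$ as the least graded semiprime ideal, and for (ii) you use the standard diagonal embedding into $\prod_{P\in\Ps} L_K(E)/P$ with kernel $\rad(L_K(E))=0$, then identify each quotient as a Leavitt path algebra of a quotient graph. The paper makes exactly these moves, citing \cite[Lemma 2.1, Remark 2.2, Lemma 2.3]{APS1} for the last identification (in the row-finite setting, so without breaking vertices).
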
\label{baercero}
\proof{ Given that $A:=L_K(E)$ is graded semiprime, $0$ is a graded semiprime ideal and of course it is the least graded ideal which is semiprime, so $\rad(A)=0$. To prove the second assertion, take into account that $\cap_{P \in \Ps}P=\rad(A)=0$. Then the map 
\newline
$$\begin{matrix}j\colon A\to\prod_{P\in\Ps} A/P\cr
a\mapsto (a+P)_P
\end{matrix}$$ 
\newline
is a monomorphism. 
\newline
For any $Q\in\P$ let $\pi_Q\colon\prod_P A/P
\to A/Q$ be the canonical projection. Since the composition $\pi_Q j$
is an epimorphism then $A$ is the subdirect product of the $A/P$, which are
graded prime algebras. Moreover, since each $P$ is a graded ideal then it is the ideal generated by some  hereditary and saturated subset $H_P$ in $E^0$ (see \cite[Lemma 2.1 and Remark 2.2]{APS1}).  Therefore $A/P\cong L_K(E/H_P)$, where $E/H_P$ denotes the quotient graph (see \cite[Lemma 2.3 (1)]{APS1}). So $A$ is the subdirect product of the Leavitt path algebras $L_K(E/H_P )$.}
\medskip
 
Take now $A$ to be the Leavitt path algebra $L_K(E)$. As it is known (see \cite[Remark 2.2]{APS1}), the graded  ideals and, in particular, the graded prime ideals $P\in\P$ are of the 
form $P=I(H)$ for a unique $H\in\mathcal H_E$, where $\mathcal H_E$ stands for the hereditary and saturated subsets of $E^0$. The quotient algebra $A/P\cong L_K(E/H)$ is prime, hence
the quotient graph $E/H$ is downward directed. Therefore there are three``mutually excluding possibilities" for the graph $E/H$:
either it satisfies condition (L) or it has a unique cycle without exits and a finite number of paths ending at this unique cycle and not containing it or there is a unique cycle without exits and an infinite number of paths ending at this cycle and not containing it.  Thus we can classify the prime ideals $P\in\P$
into two flavours:
\smallskip
$$\begin{aligned}
{\mathcal  I} = & \{ I(H) \colon 
H\in\mathcal H_E, \
E/H \text{ is downward directed and satisfies Condition (L)}\}
\cup\\
& \{I(H) \colon 
H\in\mathcal H_E, \
E/H \text{ is downward directed, has a unique cycle without exits} 
\\
&\text{and infinitely many paths ending at this cycle and not containing it}\}
\end{aligned}
$$
\smallskip
$$\begin{aligned}
\small{\mathcal J} = &\{ I(H) \colon H\in\mathcal H_E,\ E/H\ \text{is downward directed, has a unique cycle without exits}\\
& \text{and there is a finite number of paths ending at this cycle and not containing it}\}
\end{aligned}
$$

\begin{theo}\label{molecula}
 For a  row-finite graph $E$,
the center of $L_K(E)$  is a subalgebra of $\prod_{P \in {\mathcal I}} K_P \times \prod_{Q \in {\mathcal J}} K_{Q}[x,x^{-1}]$   containing the ideal $\bigoplus_{P\in \Ps} Z(W_P)$,  i.e.: 
$$\bigoplus_{P\in \Ps} Z(W_P)\triangleleft Z(L_K(E))\subset \prod_{P \in {\mathcal I}} K_P \times \prod_{Q \in {\mathcal J}} K_{Q}[x,x^{-1}],$$ 
where:
\begin{enumerate}
\item[{\rm (i)}] $K_P =K_{Q}=K$ for any $P\in\mathcal I$ and $Q\in\mathcal J$.
\item[{\rm (ii)}]  For any $P\in\P$ the ideal $W_P$ is defined as the intersection of all the graded prime ideals others than $P$.
\end{enumerate} 
\end{theo}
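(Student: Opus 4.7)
The plan is to combine the subdirect product representation of Corollary \ref{ocho}(ii) with the computation of the center of a prime Leavitt path algebra in Theorem \ref{atomo}. I would begin with the canonical monomorphism
$$j\colon L_K(E)\hookrightarrow \prod_{P\in\Ps}L_K(E)/P$$
from the proof of Corollary \ref{ocho}. As $j$ is an algebra homomorphism, its restriction sends $Z(L_K(E))$ into $\prod_{P\in\Ps}Z(L_K(E)/P)$. Since each quotient $L_K(E)/P\cong L_K(E/H_P)$ is prime, Theorem \ref{atomo} identifies its center as $0$, as $K$, or as $K[x,x^{-1}]$ according to whether $E/H_P$ has infinitely many vertices, lies in $\mathcal I$ with finite vertex set, or lies in $\mathcal J$ respectively. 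In all cases $Z(L_K(E)/P)$ embeds as a subalgebra of $K_P$ for $P\in\mathcal I$ and of $K_Q[x,x^{-1}]$ for $Q\in\mathcal J$, establishing the upper bound.

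For the lower bound, the fundamental input is $W_P\cap P\subseteq\bigcap_{Q\in\Ps}Q=\rad(L_K(E))=0$, which follows from Corollary \ref{ocho}(i). Consequently $\pi_P|_{W_P}$ is injective; discarding the trivial case $W_P=0$, its image is a nonzero ideal of the prime algebra $L_K(E)/P$. I would then check $Z(W_P)\subseteq Z(L_K(E))$ as follows: for $z\in Z(W_P)$ and any $a\in L_K(E)$, the commutator $[a,z]$ already lies in $W_P$ (as $z\in W_P$ and $W_P$ is a two-sided ideal), so it suffices to show $[a,z]\in P$. Now $\pi_P(z)$ commutes with every element of the nonzero ideal $\pi_P(W_P)$ of $L_K(E)/P$; the standard fact that in a prime algebra an element commuting with a nonzero two-sided ideal is central (a one-line consequence of primeness via $[\pi_P(z),x]\pi_P(W_P)=0$) yields $\pi_P(z)\in Z(L_K(E)/P)$, whence $\pi_P([a,z])=0$. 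Combining $[a,z]\in W_P\cap P=0$ gives $[a,z]=0$.

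The remaining assertions are then straightforward. The sum $\bigoplus_{P\in\Ps}Z(W_P)$ is direct: if $\sum_P z_P=0$ with $z_P\in Z(W_P)$, applying $\pi_Q$ annihilates every summand with $P\ne Q$ because $W_P\subseteq Q$ in that case, leaving $\pi_Q(z_Q)=0$, i.e.\ $z_Q\in W_Q\cap Q=0$. The ideal property is immediate: for $c\in Z(L_K(E))$ and $z\in Z(W_P)$ one has $cz\in W_P$ and $cz\in Z(L_K(E))$, hence $cz$ centralizes $W_P$, so $cz\in Z(W_P)$. The main conceptual obstacle I anticipate is the containment $Z(W_P)\subseteq Z(L_K(E))$; once the primeness--centralizer argument is in hand, everything else is bookkeeping with the subdirect product embedding.
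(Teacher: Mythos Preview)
Your argument is correct and, for the upper bound, identical to the paper's: both use the monomorphism $j$ from Corollary~\ref{ocho} together with Theorem~\ref{atomo}. Your remark about the case where $E/H_P$ has infinitely many vertices (so the center of the quotient is $0$) is even a bit more careful than the paper's phrasing.

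For the lower bound the two proofs diverge slightly. The paper first shows that the sum $\sum_{P}W_P$ itself is direct, then observes $Z\bigl(\bigoplus_P W_P\bigr)=\bigoplus_P Z(W_P)$ (using $W_PW_Q=0$ for $P\ne Q$), and finally invokes the general fact that \emph{the center of an ideal of a semiprime algebra is contained in the center of the algebra}, applied to the ideal $\bigoplus_P W_P\triangleleft L_K(E)$. You instead work one $P$ at a time, pushing $z\in Z(W_P)$ into the prime quotient $L_K(E)/P$ and using the primeness--centralizer argument ($[\pi_P(z),x]\,\pi_P(W_P)=0$ forces $[\pi_P(z),x]=0$) to get $[a,z]\in W_P\cap P=0$. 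Both routes exploit the same underlying non-degeneracy principle; the paper's is shorter because it quotes the semiprime fact as a black box, while yours is more self-contained and makes explicit use of the prime quotient structure already set up in Corollary~\ref{ocho}. Your verification that $\bigoplus_P Z(W_P)$ is an ideal of $Z(L_K(E))$ is also spelled out more explicitly than in the paper.
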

\proof{Let $A$ be as before the Leavitt path algebra $L_K(E)$. To prove that $Z(A)$ is a subalgebra of $\prod_{P \in {\mathcal I}} K_P \times \prod_{Q \in {\mathcal J}} K_{Q}[x,x^{-1}]$,
consider  $z\in Z(A)$ and let $j$ and $\pi_Q$ be as in the proof of Corollary \ref{ocho}. We want to show that $j(a)$ is in the center of  $\prod_P A/P$. 
Since $\pi_Q j$ is an epimorphism for any $Q$, we have
$\pi_Q j(a)\in Z(A/Q)$, so $a+Q\in Z(A/Q)$. Consequently,
$j(a)\in Z(\prod_P A/P)$, which, up to isomorphism, is of the form $\prod_{P \in {\mathcal I}} K_P \times \prod_{Q \in {\mathcal J}} K_Q[x,x^{-1}]$  by Theorem \ref{atomo}.
\newline
In order to prove the second assertion, we prove that the sum of the ideals $W_P$ is direct. Since $W_P=\cap_{Q\in\Ps\setminus\{P\}} Q$ we have $W_P\subset Q$ for any $Q\in\P$, $Q\ne P$. So, for any $P\in\P$ we have
$W_P\cap(\sum_{Q\ne P} W_Q)\subset (\cap_{R\in\Ps\setminus\{P\}} R)\cap P=\rad(L_K(E))=0$. 
\newline
To finish the proof take into account that $Z(\oplus_P W_P)=\oplus_P Z(W_P)$ and that the center of an ideal of a semiprime algebra is contained in the center of the algebra.
}
\bigskip

The upper bound for $Z(L_K(E))$ given in Theorem \ref{molecula} and Theorem \ref{atomo} allows to say that
 the building blocks for this upper bound are $K$ and $K[x,x^{-1}]$. 
The number of $K$'s and $K[x,x^{-1}]$'s appearing is completely determined by the cardinal of the sets $\mathcal I$ and $\mathcal J$. Thus it is easily computable directly from the graph $E$.
On the other hand the lower bound is also algorithmically computable for a given finite graph since each $W_P$ is an
intersection of ideals generated by hereditary and saturated subsets of the graph (hence each $W_P$ is also the
ideal generated by some hereditary and saturated set which can be determined from the graph). So, again the
ideals $W_P$ are Leavitt path algebras.  
\smallskip

We have checked with several examples of concrete graphs that the center of a Leavitt path algebra may agree with the lower or with the upper bound described in Theorem \ref{molecula}. However our approach, the precise structure of the center remains an open question.   

\section*{Acknowledgments}
All the authors have been partially supported by the Spanish MEC and Fondos FEDER through project MTM2010-15223,  by the Junta de Andaluc\'{\i}a and Fondos FEDER, jointly, through projects FQM-336, FQM-02467 and FQM-3737 and by the  programa de becas para estudios doctorales y postdoctorales SENACYT-IFARHU, contrato no. 270-2008-407,  Gobierno de Panam\'a. This work was done
during research stays of the first and last author in the University of M\'alaga. Both
authors would like to thank the host center for its hospitality and support.

\end{document}